%% LyX 2.3.8 created this file.  For more info, see http://www.lyx.org/.
%% Do not edit unless you really know what you are doing.
\documentclass[12pt,english]{article}
\usepackage[T1]{fontenc}
\usepackage[latin9]{inputenc}
\usepackage[a4paper]{geometry}
\geometry{verbose,tmargin=3cm,bmargin=3cm,lmargin=2.5cm,rmargin=2.5cm,headheight=1cm,headsep=1cm,footskip=1cm}
\usepackage{mathrsfs}
\usepackage{amsmath}
\usepackage{amsthm}
\usepackage{amssymb}

\makeatletter

%%%%%%%%%%%%%%%%%%%%%%%%%%%%%% LyX specific LaTeX commands.
%% Because html converters don't know tabularnewline
\providecommand{\tabularnewline}{\\}

%%%%%%%%%%%%%%%%%%%%%%%%%%%%%% Textclass specific LaTeX commands.
\numberwithin{equation}{section}
\numberwithin{figure}{section}
\theoremstyle{remark}
\newtheorem*{acknowledgement*}{\protect\acknowledgementname}
\theoremstyle{plain}
\newtheorem{thm}{\protect\theoremname}[section]
\theoremstyle{plain}
\newtheorem{lem}[thm]{\protect\lemmaname}
\theoremstyle{plain}
\newtheorem{prop}[thm]{\protect\propositionname}
\theoremstyle{plain}
\newtheorem{cor}[thm]{\protect\corollaryname}
\theoremstyle{plain}
\newtheorem{assumption}[thm]{\protect\assumptionname}
\theoremstyle{definition}
\newtheorem*{example*}{\protect\examplename}
\theoremstyle{definition}
\newtheorem{defn}[thm]{\protect\definitionname}
\theoremstyle{plain}
\newtheorem*{thm*}{\protect\theoremname}
\theoremstyle{plain}
\newtheorem*{prop*}{\protect\propositionname}
\theoremstyle{remark}
\newtheorem{rem}[thm]{\protect\remarkname}

%%%%%%%%%%%%%%%%%%%%%%%%%%%%%% User specified LaTeX commands.
\date{}

\makeatother

\usepackage{babel}
\providecommand{\acknowledgementname}{Acknowledgement}
\providecommand{\assumptionname}{Assumption}
\providecommand{\corollaryname}{Corollary}
\providecommand{\definitionname}{Definition}
\providecommand{\examplename}{Example}
\providecommand{\lemmaname}{Lemma}
\providecommand{\propositionname}{Proposition}
\providecommand{\remarkname}{Remark}
\providecommand{\theoremname}{Theorem}

\begin{document}
\title{Capelli identity and contiguity relations of Radon hypergeometric
function on the Grassmannian}
\author{Hironobu Kimura,\\
 Department of Mathematics, Graduate School of Science and\\
 Technology, Kumamoto University}

\maketitle

\global\long\def\R{\mathbb{R}}%
 
\global\long\def\al{\alpha}%
\global\long\def\be{\beta}%
 
\global\long\def\ga{\gamma}%
 
\global\long\def\de{\delta}%
 
\global\long\def\expo{\mathrm{exp}}%
 
\global\long\def\f{\varphi}%
 
\global\long\def\W{\Omega}%
 
\global\long\def\wm{\omega}%
 
\global\long\def\lm{\lambda}%
\global\long\def\te{\theta}%
 
\global\long\def\C{\mathbb{C}}%
 
\global\long\def\Z{\mathbb{Z}}%
 
\global\long\def\Ps{\mathbb{P}}%
 
\global\long\def\De{\Delta}%
 
\global\long\def\cbatu{\mathbb{C}^{\times}}%
 
\global\long\def\La{\Lambda}%
 
\global\long\def\vt{\vartheta}%
 
\global\long\def\ep{\epsilon}%
 
\global\long\def\G{\Gamma}%
\global\long\def\GL#1{\mathrm{GL}(#1)}%
 
\global\long\def\gras{\mathrm{Gr}}%
 
\global\long\def\fl{\mathrm{Flag}}%
 
\global\long\def\vep{\varepsilon}%
 
\global\long\def\Gl{\mathrm{GL}}%
 
\global\long\def\diag{\mathrm{diag}}%
 
\global\long\def\tr{\,\mathrm{^{t}}}%
 
\global\long\def\re{\mathrm{Re}}%
 
\global\long\def\im{\mathrm{Im}}%
 
\global\long\def\sm{\sigma}%
 
\global\long\def\ini{\mathrm{in}_{\prec}}%
 
\global\long\def\Span{\mathrm{span}}%
 
\global\long\def\cS{\mathcal{S}}%
 
\global\long\def\cL{\mathcal{L}}%
 
\global\long\def\rank{\mathrm{rank}}%
 
\global\long\def\tH{\tilde{H}}%
 
\global\long\def\mat{\mathrm{Mat}}%
 
\global\long\def\lto{\longrightarrow}%
 
\global\long\def\Si{\mathfrak{S}}%
 
\global\long\def\cO{\mathcal{O}}%
 
\global\long\def\gl{\mathfrak{gl}}%
\global\long\def\gee{\mathfrak{g}}%
 
\global\long\def\Tr{\,\mathrm{Tr}}%
 
\global\long\def\ad{\mathrm{ad}}%
 
\global\long\def\Ad{\mathrm{Ad}}%
 
\global\long\def\ha{\mathfrak{h}}%
 
\global\long\def\fj{\mathfrak{j}}%
 
\global\long\def\mrn{\mat'(r,N)}%
 
\global\long\def\sgn{\mathrm{sgn}}%
 
\global\long\def\hlam{H_{\lambda}}%
 
\global\long\def\mnm{\mat'(m,N)}%
\global\long\def\cP{\mathcal{P}}%
 
\global\long\def\ghyp{\,_{2}F_{1}}%
 
\global\long\def\kum{\,_{1}F_{1}}%
 
\global\long\def\auto{\mathrm{Aut}}%
 
\global\long\def\la{\langle}%
 
\global\long\def\ra{\rangle}%
 
\global\long\def\Ai{\mathrm{Ai}}%
 
\global\long\def\adj{\mathrm{Ad}}%
 
\global\long\def\yn{\mathbf{Y}_{n}}%
 
\global\long\def\eq{\mathcal{I}}%
 
\global\long\def\bl{B_{\lambda}}%
 
\global\long\def\hyp#1#2{\, _{#1}F_{#2}}%
 
\global\long\def\jro{J_{r}^{\circ}}%
 
\global\long\def\jroo{\mathfrak{j}_{r}^{\circ}}%
\global\long\def\jor#1{J^{\circ}(#1)}%
 
\global\long\def\fa{\mathfrak{a}}%
 
\global\long\def\pa{\partial}%
 
\global\long\def\bx{\mathbf{x}}%
 
\global\long\def\herm{\mathscr{H}(r)}%
 
\global\long\def\etr{\mathrm{etr}}%
 
\global\long\def\cR{\mathcal{R}}%
 
\global\long\def\bb{\mathbf{b}}%

\begin{abstract}
The Radon hypergeometric function (Radon HGF) $F(z;\al)$ of type
$\lm$ on the Grassmannian $\gras(m,N)$, $N=rn$ for some integers
$r,n>0$, is defined as a Radon transform of the character of the
universal covering $\tilde{H}_{\lm}$ of the Lie group $H_{\lm}\subset\GL N$
specified by a partition $\lm$ of $n$, where $\al\in\C^{n}$ is
the parameter to in the character. For this Radon HGF, we give the
contiguity relations of the form $\cL^{(i,j)}F(z;\al)=\bb(\al_{0}^{(j)})F(z;\al+\ep^{(i)}-\ep^{(j)})$
with a differential operator $\cL^{(i,j)}$of order $r$ and the polynomial
$\bb(s)$ called $b$-function. As its application, we derive the
contiguity relation for the beta and gamma functions defined by Hermitian
matrix integral. In establishing the contiguity relations for Radon
HGF, the classical Capelli identity and Cayley's formula play an essential
role.
\end{abstract}

\section{Introduction}

In \cite{kimura-2}, we introduced the Radon hypergeometric function
(Radon HGF) of non-confluent and confluent type on the Grassmannian
manifold $\gras(m,N)$, with $N=rn$ for some $1\leq r<m$ and $n$,
which gives an extension of Gelfand's HGF \cite{Gelfand,Kimura-Haraoka}
(the case $r=1$ is the Gelfand HGF). We also discussed in \cite{kimura-2}
a relation of Radon HGF to HGFs defined by Hermitian matrix integrals. 

The Radon HGF is defined as a Radon transform of a character $\chi_{\lm}$
of the universal covering group of subgroup $H_{\lm}\subset\GL N$
indexed by a partition $\lm$ of $n$. For example, when $\lm=(1,\dots,1)$,
$H_{\lm}=\{\diag(h^{(1)},\dots,h^{(n)})\in\GL N\mid h^{(k)}\in\GL r\;(1\leq k\leq n)\}$
and $\chi_{\lm}=\prod_{1\leq k\leq n}(\det h^{(k)})^{\al^{(k)}},\al^{(k)}\in\C$,
and the Radon HGF has the form 
\[
F(z;\al)=\int_{C(z)}\prod_{1\leq k\leq n}(\det\vec{u}z^{(k)})^{\al^{(k)}}du
\]
with independent variables $z=(z^{(1)},\dots,z^{(n)}),z^{(k)}\in\mat(m,r)$,
$\vec{u}=(1_{r},u)$ with $u=(u_{a,b})\in\mat(r,m-r)$ and $du=\wedge du_{a,b}$.
Note that HGFs by Hermitian matrix integral are important and are
studied in various fields of mathematics and statistics \cite{Faraut,inamasu-ki,Kontsevich,Mehta,muirhead,muirhead-2,shimura}.
The purpose of this paper is to give the contiguity relations for
the Radon HGF and to derive the known contiguity relations for the
beta and gamma functions defined by Hermitian matrix integral as an
application of our result. See \cite{Faraut,inamasu-ki,muirhead-2}
for the beta and gamma function by Hermitian matrix integral. For
the Gelfand HGF, which is a particular case of Radon HGF, we know
several works on the contiguity relations \cite{Horikawa,Kimura-H-T,mimachi,sasaki}.
So the contiguity relations for the Radon HGF give an extension of
those for the Gelfand HGF.

Since the Gelfand HGF (and hence the Radon HGF) contains the Gauss
HGF as one of the simplest cases of non-confluent type, let us explain
what does ``contiguity relations'' mean by taking the Gauss HGF
as an example. The Gauss HGF is defined by the series
\begin{equation}
\hyp 21(a,b,c;x)=\sum_{k=0}^{\infty}\frac{(a)_{k}(b)_{k}}{(c)_{k}k!}x^{k},\label{eq:intro-1}
\end{equation}
where $a,b,c\in\C$ with $c\notin\Z_{\leq0}$ and $(a)_{k}=\G(a+k)/\G(a)$
is the Pochhammer symbol defined by the gamma function $\G(a)$. This
series gives a holomorphic function in the unit disc $|x|<1$ of $\C$.
To continue $\hyp 21(a,b,c;x)$ analytically in $x$ beyond the unit
circle $|x|=1$, the following integral representation for the series
(\ref{eq:intro-1}) is useful: 
\begin{equation}
\frac{\G(c)}{\G(a)\G(c-a)}\int_{0}^{1}u^{a-1}(1-u)^{c-a-1}(1-ux)^{-b}du.\label{eq:intro-2}
\end{equation}
In fact, by deforming the path of integration $\overrightarrow{0,1}$
in (\ref{eq:intro-2}) in the complex $u$-plane suitably, we can
continue it analytically with respect to $x$ along any path in $\C\setminus\{0,1\}$
starting from any point in the disc $|x|<1$. However in this case
we must assume $\re\,a>0,\re(c-a)>0$ to assure the convergence of
the integral. The series (\ref{eq:intro-1}) satisfies the differential
equation
\begin{equation}
[x(1-x)\pa^{2}+\left\{ c-(a+b+1)x\right\} \pa-ab]y=0,\quad\pa=d/dx\label{eq:intro-3}
\end{equation}
for (\ref{eq:intro-1}), in fact, for $a,b,c$ generic, $\hyp 21(a,b,c;x)$
is characterized as a unique holomorphic solution $y(x)$ to (\ref{eq:intro-3})
at $x=0$ with $y(0)=1$. Contiguity relations for $\hyp 21(a,b,c;x)$
are formulas which connect $\hyp 21(a,b,c;x)$ to $\hyp 21(a',b',c';x)$,
where $(a',b',c')$ is obtained from $(a,b,c)$ by increasing or decreasing
one of $a,b,c$ by $1$. For example we have 
\begin{gather}
(x\pa+a)\hyp 21(a,b,c;x)=a\cdot\hyp 21(a+1,b,c;x),\label{eq:intro-4-1}\\
(x(1-x)\pa+c-a-bx)\hyp 21(a,b,c;x)=(c-a)\cdot\hyp 21(a-1,b,c;x),\label{eq:intro-4}
\end{gather}
see \cite{IKSY}. The differential operators which give these relations
are called the \emph{contiguity operators}. We should point out that
the contiguity relations above hold for any solution to (\ref{eq:intro-3})
and that any solution of (\ref{eq:intro-3}) can be expressed by the
integral (\ref{eq:intro-2}) taking an appropriate path of integration
instead of $\overrightarrow{0,1}$. The importance of contiguity relations
can be explained as follows. For $\hyp 21(a,b,c;x)$, which is identified
with the integral (\ref{eq:intro-2}), consider its analytic continuation
with respect to the parameter $a$. By the expression (\ref{eq:intro-2}),
the analytic continuation in $x$ becomes possible, but to ensure
the convergence of the integral we assumed $\re\,a>0,\re(c-a)>0$.
Using (\ref{eq:intro-4}), we can carry out the analytic continuation
with respect to $a$. In fact, put $a\mapsto a+1$ in (\ref{eq:intro-4})
and obtain 
\[
(x(1-x)\pa+c-a-1-bx)\hyp 21(a+1,b,c;x)=(c-a-1)\hyp 21(a,b,c;x).
\]
The left hand side is analytic in $a$ for $\re\,a>-1$. It follows
that $\hyp 21(a,b,c;x)$ is analytically continued with respect to
$a$ to a larger domain $\re\,a>-1$ with a possible pole at $c-a-1=0$.

We know that the Gelfand HGF provides a framework to understand many
classical HGFs from a unified viewpoint \cite{Gelfand,Kimura-Haraoka}.
For example, the beta and gamma functions are understood as Gelfand's
HGF on $\gras(2,3)$ corresponding to partitions $(1,1,1)$ and $(2,1)$,
respectively, and the Gauss HGF and its confluent family, namely,
Kummer's confluent HGF, Bessel function, Hermite-Weber function and
Airy function, are understood as the Gelfand HGF on $\gras(2,4)$
corresponding to the partitions $(1,1,1,1)$, $(2,1,1)$, $(2,2)$,
$(3,1)$ and $(4)$, respectively. See \cite{Appell-2,Erdelyi,IKSY}
for the these classical HGF. The Radon HGF plays a similar role for
HGFs defined by Hermitian matrix integral. The beta and gamma functions
by Hermitian martix integral are defined as 
\begin{align*}
B_{r}(a,b) & :=\int_{0<U<1_{r}}(\det U)^{a-r}(\det(1_{r}-U))^{b-r}\,dU,\\
\G_{r}(a) & :=\int_{U>0}\etr(-U)(\det U)^{a-r}\,dU,
\end{align*}
where $U$ is an integration variable in the space $\herm$ of Hermitian
matrices of size $r$, $dU$ is the standard Euclidean volume form
of a real vector space $\herm$ of dimesion $r^{2}$ and $0<U<1_{r}$
implies that $U,1_{r}-U$ are positive definite. We also have the
Hermitian matrix integral analogue of the Gauss and its confluent
family \cite{Faraut,inamasu-ki}. For example 
\begin{gather}
\frac{\G_{r}(c)}{\G_{r}(a)\G_{r}(c-a)}\int_{0<U<1_{r}}(\det U)^{a-r}(\det(1_{r}-U))^{c-a-r}(\det(1_{r}-UX))^{-b}\,dU,\label{eq:intro-5}\\
\frac{\G_{r}(c)}{\G_{r}(a)\G_{r}(c-a)}\int_{0<U<1_{r}}(\det U)^{a-r}(\det(1_{r}-U))^{c-a-r}\exp(\Tr(UX))\,dU\label{eq:intro-6}
\end{gather}
are analogues of Gauss and Kummer, and they can be understood as the
Radon HGF on $\gras(2r,4r)$ corresponding to the partitions $(1,1,1,1)$
and $(2,1,1)$, respectively \cite{kimura-2}.

Our contiguity relation for the Radon HGF may serve the study of analytic
continuation of the HGFs by Hermitian matrix integral with respect
to the parameters, the parameters $a,b,c$ in the case (\ref{eq:intro-5}),
for example. In fact, in Section \ref{sec:Application-to-gamma} we
give the formula
\begin{align}
B_{r}(a+1,b) & =\frac{a(a-1)\cdots(a-r+1)}{(a+b)(a+b-1)\cdots(a+b-r+1)}B_{r}(a,b),\label{eq:intro-7}\\
\G_{r}(a+1) & =a(a-1)\cdots(a-r+1)\G_{r}(a)\label{eq:intro-8}
\end{align}
as an application of Theorems \ref{thm:conti-nonconf} and \ref{thm:cont-conf}.
Application to the Hermitian matrix integral analogues of the Gauss
HGF and its confluent family will be discussed in another paper. Note
that an analogue of (\ref{eq:intro-4-1}) for (\ref{eq:intro-5})
is already given in Proposition XV.3.1 of \cite{Faraut}. Note also
that the study of analytic continuation with respect to $a,c$ for
(\ref{eq:intro-6}) is important in the problem of  number theory
and discussed in \cite{shimura}.

In establishing contiguity relations for the Radon HGF, the key point
is Cayley's formula in the invariant theory: for $u=(u_{i,j})_{1\leq i,j\leq r}$
and $\pa_{i,j}=\pa/\pa u_{i,j}$, we have
\[
\det(\pa_{i,j})(\det u)^{s}=\bb(s)(\det u)^{s-1},\quad\bb(s)=s(s+1)\cdots(s+r-1),
\]
where $\bb(s)$ is called the $b$-function in the theory of prehomogeneous
vector spaces. The Cayley formula is obtained from the classical Capelli
identity, and when $r=1$, it reduces to almost trivial formula
\begin{equation}
\frac{d}{du}u^{s}=su^{s-1}\label{eq:intro-9}
\end{equation}
and we can obtain the famous identity $\G(s+1)=s\G(s)$ by multiplying
the both sides of (\ref{eq:intro-9}) by $e^{-u}$ and integrating
them on the interval $(0,\infty)$. 

This paper is organized as follows. In Section 2, we recall the definition
of the Radon HGF. In Section 3, after recalling the facts on the Capelli
identity and Cayley's formula, we state our main results, Theorem
\ref{thm:conti-nonconf} and Theorem \ref{thm:cont-conf}, which give
the explicit form of the contiguity relations for the Radon HGF of
non-confluent type and confluent type, respectively. The proofs of
the theorems are given in Sections \ref{subsec:Proof-nonconf} and
\ref{subsec:Proof-conf}. In Section \ref{sec:Application-to-gamma},
we apply Theorems \ref{thm:conti-nonconf} and \ref{thm:cont-conf}
to the Radon HGF corresponding to the beta and gamma functions defined
by Hermitian matrix integrals, and we derive (\ref{eq:intro-7}) and
(\ref{eq:intro-8}) from the contiguity relations for the corresponding
Radon HGF. 
\begin{acknowledgement*}
This work was supported by JSPS KAKENHI Grant Number JP19K03521.
\end{acknowledgement*}

\section{\label{sec:Radon-HGF}Radon HGF}

\subsection{Jordan group}

We recall the definition of Radon HGF. For the detailed explanation,
see \cite{kimura-2}. Let $r$ and $N$ be positive integers such
that $r<N$ and assume $N=nr$ for some integer $n$. Suppose we are
given a partition $\lm=(n_{1},n_{2},\dots,n_{\ell})$ of $n$, namely
a nonincreasing sequence of positive integers $n_{1}\geq n_{2}\geq\cdots\geq n_{\ell}$
such that $|\lm|:=n_{1}+\cdots+n_{\ell}=n$. For such $\lm$, let
us consider a complex Lie subgroup $H_{\lm}$ of the complex general
linear group $G=\GL N$. Put 

\[
J_{r}(p):=\left\{ h=\left(\begin{array}{cccc}
h_{0} & h_{1} & \dots & h_{p-1}\\
 & \ddots & \ddots & \vdots\\
 &  & \ddots & h_{1}\\
 &  &  & h_{0}
\end{array}\right)\mid h_{0}\in\GL r,\ h_{i}\in\mat(r)\right\} \subset\GL{pr},
\]
which is a Lie group called the (generalized) Jordan group. We define
\[
H_{\lm}:=\left\{ h=\diag(h^{(1)},\dots,h^{(\ell)})\mid h^{(j)}\in J_{r}(n_{j})\right\} \subset G.
\]
Then $H_{\lm}\simeq J_{r}(n_{1})\times\cdots\times J_{r}(n_{\ell})$,
where an element $(h^{(1)},\dots,h^{(\ell)})\in J_{r}(n_{1})\times\cdots\times J_{r}(n_{\ell})$
is identified with a block diagonal matrix $\diag(h^{(1)},\dots,h^{(\ell)})\in H_{\lm}$.
In particular, for $\lm=(1,\dots,1)$, $H_{\lm}\simeq(\GL r)^{n}$
and, when $r=1$ it reduces to the Cartan subgroup of $G$ consisting
of diagonal matrices. We also use a unipotent subgroup $\jro(p)\subset J_{r}(p)$
defined by 
\[
\jro(p):=\left\{ h=\left(\begin{array}{cccc}
1_{r} & h_{1} & \dots & h_{p-1}\\
 & \ddots & \ddots & \vdots\\
 &  & \ddots & h_{1}\\
 &  &  & 1_{r}
\end{array}\right)\mid h_{i}\in\mat(r)\right\} .
\]
An element $h\in J_{r}(p)$ is expressed as $h=\sum_{0\leq i<p}h_{i}\otimes\La^{i}$
using the shift matrix $\La=(\de_{i+1,j})$ of size $p$. Taking into
account the expression $h=\sum_{0\leq i<p}h_{i}\otimes\La^{i}$, we
can describe $J_{r}(p)$ and $\jro(p)$ as follows. Put $R=\mat(r)$
and consider it as a $\C$-algebra. Let $R[w]$ be the ring of polynomials
in $w$ with coefficients in $R$. Then $J_{r}(p)$ is identified
with the group of units in the quotient ring of $R[w]$ by the principal
ideal $(w^{p})$:
\[
J_{r}(p)\simeq\left(R[w]/(w^{p})\right)^{\times}.
\]
Thus we can write $J_{r}(p)$ and $\jro(p)$ as 
\begin{align*}
J_{r}(p) & \simeq\left\{ h_{0}+\sum_{1\leq i<p}h_{i}w^{i}\in R[w]/(w^{p})\mid h_{0}\in\GL r\right\} ,\\
\jro(p) & \simeq\left\{ 1_{r}+\sum_{1\leq i<p}h_{i}w^{i}\in R[w]/(w^{p})\right\} .
\end{align*}
In the following we use freely this identification when necessary. 

\subsection{\label{subsec:Char-conf-1}Character of Jordan group}

In this section, we give the characters of the universal covering
group of Jordan group and of $H_{\lm}$. The Radon HGF is defined
as a Radon transform of these characters. The following lemma is easily
shown.
\begin{lem}
\label{lem:radon-conf-1}We have a group isomorphism 
\[
J_{r}(p)\simeq\GL r\ltimes\jro(p)
\]
defined by the correspondence $\GL r\ltimes\jro(p)\ni(g,h)\mapsto g\cdot h=\sum_{0\leq i<p}(gh_{i})\otimes\La^{i}\in J_{r}(p)$,
where the semi-direct product is defined by the action of $\GL r$
on $\jro(p)$: $h\mapsto g^{-1}hg=\sum_{i}(g^{-1}h_{i}g)\otimes\La^{i}$.
\end{lem}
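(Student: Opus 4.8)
The plan is to prove the asserted isomorphism $J_r(p)\simeq \GL r\ltimes \jro(p)$ by exhibiting the explicit map and checking it is a bijective group homomorphism, using the ring-theoretic description $J_r(p)\simeq (R[w]/(w^p))^\times$ with $R=\mat(r)$ that was set up just before the lemma. In this language, an element of $J_r(p)$ is a unit $h = h_0 + h_1 w + \cdots + h_{p-1}w^{p-1}$, and being a unit is equivalent to $h_0\in\GL r$ (since the nilpotent part $h_1w+\cdots$ contributes an invertible factor $1_r + (\text{nilpotent})$). This immediately suggests the factorization $h = h_0\cdot(1_r + h_0^{-1}h_1 w + \cdots + h_0^{-1}h_{p-1}w^{p-1})$, which writes every $h$ as a product of a constant term $g=h_0\in\GL r$ and an element of $\jro(p)$.

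First I would make precise the semidirect product structure: $\GL r$ acts on $\jro(p)$ by conjugation inside $(R[w]/(w^p))^\times$, $k\mapsto g^{-1}kg$ (equivalently, coefficientwise conjugation $h_i\mapsto g^{-1}h_ig$ as stated), and one checks this is a well-defined group action by automorphisms — it preserves the constant term $1_r$ and respects multiplication. Then I would define the multiplication on $\GL r\ltimes\jro(p)$ in the usual way, $(g_1,k_1)(g_2,k_2) = (g_1g_2,\ (g_2^{-1}k_1g_2)k_2)$, and the candidate map $\Phi\colon \GL r\ltimes\jro(p)\to J_r(p)$, $\Phi(g,k) = g\cdot k$ (the product taken in $R[w]/(w^p)$), which in coefficients is $g\cdot k = \sum_{0\le i<p}(g k_i)\otimes\La^i$ as the lemma records. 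The bulk of the verification is the homomorphism property: $\Phi((g_1,k_1)(g_2,k_2)) = \Phi(g_1g_2,\ (g_2^{-1}k_1g_2)k_2) = g_1g_2\cdot g_2^{-1}k_1g_2\cdot k_2 = g_1 k_1 g_2 k_2 = \Phi(g_1,k_1)\Phi(g_2,k_2)$, where the cancellation $g_2 g_2^{-1} = 1_r$ happens in the associative ring $R[w]/(w^p)$.

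For bijectivity, injectivity follows because from $g\cdot k\in J_r(p)$ one recovers $g$ as the constant coefficient and then $k = g^{-1}\cdot(g\cdot k)$; surjectivity is the factorization $h = h_0\cdot(h_0^{-1}h)$ noted above, with $h_0^{-1}h\in\jro(p)$ since its constant term is $1_r$. I would also remark that $\Phi$ restricts to the identity-like embeddings $\GL r\hookrightarrow J_r(p)$ (as $h_0\otimes 1$) and $\jro(p)\hookrightarrow J_r(p)$, and that $\jro(p)$ is normal in $J_r(p)$ (it is the kernel of the "constant term" homomorphism $J_r(p)\to\GL r$), which is what makes the semidirect decomposition genuine rather than merely a set-theoretic product.

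I do not expect a serious obstacle here; the statement is, as the authors say, "easily shown." The only point requiring a moment's care is keeping the conjugation conventions consistent — whether the action is by $g^{-1}(\cdot)g$ or $g(\cdot)g^{-1}$, and correspondingly which slot carries the twist in the product rule on $\GL r\ltimes\jro(p)$ — so that the homomorphism identity telescopes cleanly. With the convention fixed as in the lemma statement, everything reduces to associativity and the unit axiom in $R[w]/(w^p)$, so the proof is essentially a bookkeeping exercise rather than a conceptual one.
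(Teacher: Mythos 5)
Your proof is correct and is exactly the standard verification the authors have in mind when they say the lemma ``is easily shown'' (the paper itself omits the proof): factor $h=h_0\cdot(h_0^{-1}h)$ in $R[w]/(w^{p})$, check the twisted product rule makes $(g,k)\mapsto gk$ a homomorphism, and recover $g$ from the constant coefficient for injectivity. Your conjugation convention matches the one in the lemma statement, so there is nothing to correct.
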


Let us determine the characters of the universal covering group $\tilde{J}_{r}(p)$
of $J_{r}(p)$. Since $J_{r}(p)\simeq\GL r\ltimes\jro(p)$, it is
sufficient to determine characters of the universal covering group
$\widetilde{\Gl}(r)$ of $\GL r$ and of $\jro(p)$ which come from
those of $\tilde{J}_{r}(p)$ by restriction. 

The characters of $\widetilde{\Gl}(r)$ is given as follows (Lemma
2.2 of \cite{kimura-2}). 
\begin{lem}
\label{lem:radon-conf-1-1} Any character $f:\widetilde{\Gl}(r)\to\cbatu$
is given by $f(x)=(\det x)^{a}$ for some $a\in\C$.
\end{lem}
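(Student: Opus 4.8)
The plan is to reduce the statement to the corresponding fact about the Lie algebra. Since $\widetilde{\GL}(r)$ is a connected, simply connected Lie group, a character $f\colon\widetilde{\GL}(r)\to\cbatu$ is uniquely determined by its differential $df\colon\gl(r)\to\C$, which is a Lie algebra homomorphism; conversely every such homomorphism integrates. So the first step is to classify Lie algebra homomorphisms $\gl(r)\to\C$. Because $\C$ is abelian, any such homomorphism kills the derived subalgebra $[\gl(r),\gl(r)]$. The key algebraic fact is that $[\gl(r),\gl(r)]=\mathfrak{sl}(r)$, the trace-zero matrices; hence $df$ factors through $\gl(r)/\mathfrak{sl}(r)\simeq\C$, the quotient being realized by the trace. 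Therefore $df=a\cdot\tr$ for some $a\in\C$, i.e. $df(X)=a\Tr X$.

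The second step is to integrate this back to the group. The character $x\mapsto(\det x)^{a}$ on $\GL(r)$ (for integer $a$) lifts to a well-defined character on the universal cover for arbitrary $a\in\C$: concretely, one can define $(\det x)^a := \exp(a\log\det x)$ using the fact that on the simply connected $\widetilde{\GL}(r)$ the function $\det$ composed with the covering map has a global single-valued logarithm. Its differential at the identity is $X\mapsto a\,\Tr X$, since $d(\log\det)(1_r) = \Tr$. Thus the candidate character realizes every possible differential $a\cdot\tr$. By the uniqueness of a Lie group homomorphism from a simply connected group with prescribed differential, any character $f$ with $df = a\cdot\tr$ must equal $(\det x)^a$, which proves the lemma.

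I expect the only genuine point requiring care to be the passage between the group and its Lie algebra — specifically, making precise that $\widetilde{\GL}(r)$ is simply connected (it is, by definition of universal cover, and $\GL(r,\C)$ is connected) so that the correspondence between characters and their differentials is a bijection, and that the multivalued function $(\det x)^a$ genuinely becomes single-valued after pulling back to $\widetilde{\GL}(r)$. The purely algebraic input, $[\gl(r),\gl(r)]=\mathfrak{sl}(r)$, is standard (for $r\geq 2$ it follows from $[E_{ij},E_{jk}]=E_{ik}$ for $i\neq k$ and $[E_{ij},E_{ji}]=E_{ii}-E_{jj}$; for $r=1$ the statement is trivial). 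Everything else is routine, so I would present the argument compactly, citing Lemma 2.2 of \cite{kimura-2} for the fully spelled-out version if desired.
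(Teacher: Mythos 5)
Your argument is correct, and it is the standard one; the paper itself gives no proof of this lemma, deferring to Lemma 2.2 of \cite{kimura-2}, so there is nothing to contrast it with here. The chain of reductions --- simply connectedness gives a bijection between characters and Lie algebra homomorphisms $\gl(r)\to\C$, abelianness of the target forces factoring through $\gl(r)/[\gl(r),\gl(r)]=\gl(r)/\mathfrak{sl}(r)\simeq\C$ realized by the trace, and the single-valued branch of $\exp(a\log\det)$ on the cover integrates every $a\Tr$ --- is exactly what one expects. The only point worth making explicit is that ``character'' must be understood as a \emph{holomorphic} (or at least continuous) homomorphism, since the passage to the differential is meaningless for abstract group homomorphisms and the statement is false for them; this is the paper's implicit convention, but your proof silently relies on it.
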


Let us give the characters of $\jro(p)$. Let $\jroo(p)$ be the Lie
algebra of $\jro(p)$:
\[
\jroo(p)=\{X=\sum_{1\leq i<p}X_{i}w^{i}\mid X_{i}\in R\},
\]
where the Lie bracket of $X,Y\in\jroo(p)$ is given by $[X,Y]=\sum_{2\leq k<p}\sum_{i+j=k}[X_{i},Y_{j}]w^{k}$.
Note that we have an isomorphism $\jroo(p)\simeq\bigoplus_{1\leq i<p}Rw^{i}\simeq\oplus^{p-1}R$
as a vector space. To obtain a character of $\jro(p)$, we lift the
character of $\jroo(p)$ to that of $\jro(p)$ by the exponential
map. Since $\jro(p)$ is a simply connected Lie group, the exponential
map
\[
\exp:\jroo(p)\to\jro(p),\,X\mapsto\exp(X)=\sum_{k=0}^{\infty}\frac{1}{k!}X^{k}=\sum_{0\leq k<p}\frac{1}{k!}X^{k}
\]
is a biholomorphic map. Hence we can consider the inverse map $\log:\jro(p)\to\jroo(p)$,
which, for $h=1_{r}+\sum_{1\leq i<p}h_{i}w^{i}\in\jro(p)$, defines
$\te_{k}(h)\in R$ by
\begin{align}
\log h & =\log\left(1_{r}+h_{1}w+\cdots+h_{p-1}w^{p-1}\right)\nonumber \\
 & =\sum_{1\leq k<p}\frac{(-1)^{k+1}}{k}\left(h_{1}w+\cdots+h_{p-1}w^{p-1}\right)^{k}\nonumber \\
 & =\sum_{1\leq k<p}\theta_{k}(h)w^{k}.\label{eq:char-conf-0}
\end{align}
Here $\te_{k}(h)$ is given as a sum of monomials of noncommutative
elements $h_{1},\dots,h_{p-1}\in R$. If a weight of $h_{i}$ is defined
to be $i$, then the monomials appearing in $\te_{k}(h)$ has the
weight $k$. For example we have 
\begin{align*}
\te_{1}(h) & =h_{1},\\
\te_{2}(h) & =h_{2}-\frac{1}{2}h_{1}^{2},\\
\te_{3}(h) & =h_{3}-\frac{1}{2}(h_{1}h_{2}+h_{2}h_{1})+\frac{1}{3}h_{1}^{3},\\
\te_{4}(h) & =h_{4}-\frac{1}{2}(h_{1}h_{3}+h_{2}^{2}+h_{3}h_{1})+\frac{1}{3}(h_{1}^{2}h_{2}+h_{1}h_{2}h_{1}+h_{2}h_{1}^{2})-\frac{1}{4}h_{1}^{4}.
\end{align*}

\begin{lem}
\label{lem:Radon-conf-2-1}Let $\chi:\jro(p)\to\cbatu$ be a character
obtained from that of $\tilde{J}_{r}(p)$ by restricting it to $\jro(p)$.
Then there exists $\al=(\al_{1},\dots,\al_{p-1})\in\C^{p-1}$ such
that 
\begin{equation}
\chi(h;\al)=\exp\left(\sum_{1\leq i<p}\al_{i}\Tr\,\theta_{i}(h)\right).\label{eq:char-conf-1}
\end{equation}
Conversely, $\chi$ defined by (\ref{eq:char-conf-1}) gives a character
of $\jro(p)$.
\end{lem}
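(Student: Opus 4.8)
The plan is to prove both directions by reducing the statement to the structure of $\jro(p)$ and the nature of characters on simply connected nilpotent Lie groups. For the converse (easier) direction, I would start by observing that $\chi(\cdot;\al)$ as defined in \eqref{eq:char-conf-1} is the composition $\jro(p)\xrightarrow{\log}\jroo(p)\xrightarrow{\psi}\C\xrightarrow{\exp}\cbatu$, where $\psi(X)=\sum_{1\le i<p}\al_i\Tr X_i$ for $X=\sum_i X_iw^i$. Since $\exp:\jroo(p)\to\jro(p)$ is a biholomorphism (stated in the excerpt), it suffices to check that $\psi$ is a Lie algebra homomorphism into the abelian Lie algebra $\C$, i.e.\ that $\psi([X,Y])=0$ for all $X,Y$. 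But $[X,Y]=\sum_{2\le k<p}\bigl(\sum_{i+j=k}[X_i,Y_j]\bigr)w^k$, so $\psi([X,Y])=\sum_k\al_k\Tr\bigl(\sum_{i+j=k}[X_i,Y_j]\bigr)$, and each $\Tr[X_i,Y_j]=0$. Hence $\psi$ is a character of $\jroo(p)$, and because $\jro(p)$ is simply connected it integrates to a genuine character $\exp\circ\psi\circ\log$ of $\jro(p)$; a short computation identifies this with \eqref{eq:char-conf-1} using $\log\exp=\mathrm{id}$ and the definition of $\te_i$.

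For the forward direction, suppose $\chi:\jro(p)\to\cbatu$ is a character (obtained by restriction from $\tilde J_r(p)$, though what we really use is just that it is a holomorphic character of the simply connected group $\jro(p)$). Its differential $d\chi_e:\jroo(p)\to\C$ is a Lie algebra homomorphism into the abelian $\C$, hence factors through $\jroo(p)/[\jroo(p),\jroo(p)]$. I would next compute the abelianization: the commutators $[X,Y]$ only produce elements of $\bigoplus_{2\le k<p}Rw^k$ whose $w^k$-component is a sum of brackets $[X_i,Y_j]$ with $i+j=k$, $i,j\ge1$; since brackets of matrices span exactly $\mathfrak{sl}(r)=\{Z\in R\mid \Tr Z=0\}$, one gets $[\jroo(p),\jroo(p)]=Rw\cap\{0\}\ \oplus\ \bigoplus_{2\le k<p}\mathfrak{sl}(r)w^k$ — that is, the degree-one part contributes nothing to commutators and each higher degree part loses exactly the traceless matrices. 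Therefore $\jroo(p)/[\jroo(p),\jroo(p)]\cong Rw\oplus\bigoplus_{2\le k<p}(R/\mathfrak{sl}(r))w^k$, and $R/\mathfrak{sl}(r)\cong\C$ via the trace. So any linear functional on the abelianization is of the form $X\mapsto \sum_{1\le i<p}\al_i\Tr X_i$ for suitable $\al_i\in\C$ (for $i=1$ one should double-check that the full $Rw$, not just scalars, survives — indeed $Rw$ is central modulo higher terms, so actually every linear functional on $Rw$ descends; but $d\chi$ coming from a character of $\tilde J_r(p)$ where $\GL r$ acts by conjugation forces $\GL r$-invariance, pinning $d\chi|_{Rw}$ to a multiple of the trace as well). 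Thus $d\chi=d(\chi(\cdot;\al))$ for this $\al$, and since both $\chi$ and $\chi(\cdot;\al)$ are characters of the simply connected group $\jro(p)$ with the same differential, they coincide.

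The main obstacle I anticipate is the bookkeeping on the $i=1$ component: naively the abelianization retains all of $Rw$, which would give a character depending on an $r^2$-dimensional parameter rather than a single scalar $\al_1$. The resolution is that the hypothesis ``obtained from a character of $\tilde J_r(p)$'' is essential here — by Lemma \ref{lem:radon-conf-1}, $\tilde J_r(p)\simeq\widetilde{\GL}(r)\ltimes\jro(p)$, so any character of $\tilde J_r(p)$ restricts to a character of $\jro(p)$ that is invariant under the conjugation action of $\GL r$; since the $\GL r$-invariant linear functionals on $Rw^i\cong\mathfrak{gl}(r)$ are precisely the multiples of the trace, this cuts each component down to one scalar. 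I would make this invariance argument explicit as the key step, and otherwise the proof is a routine assembly of the exponential-map correspondence, the commutator computation, and the rigidity of characters on simply connected Lie groups.
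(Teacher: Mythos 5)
Your proposal is correct. Note that the paper does not actually prove this lemma here --- it simply cites Lemma 2.7 of the reference \cite{kimura-2} --- so there is no in-paper argument to compare against; your proof is the natural one (differentiate, compute $[\jroo(p),\jroo(p)]=\bigoplus_{2\leq k<p}\mathfrak{sl}(r)w^{k}$, integrate back using simple connectedness), and you correctly isolate the one non-routine point: commutators never reach the degree-one component $Rw$, so the abelianization retains all of $\gl(r)$ there, and it is precisely the hypothesis that $\chi$ restricts from $\tilde{J}_{r}(p)\simeq\widetilde{\Gl}(r)\ltimes\jro(p)$ --- forcing invariance under $\GL r$-conjugation, whose invariant functionals on $\gl(r)$ are the multiples of the trace --- that pins $d\chi|_{Rw}$ down to a single scalar $\al_{1}$. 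Without that hypothesis the statement would be false for $r\geq2$, so making the invariance argument explicit, as you plan to, is exactly the right emphasis.
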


\begin{proof}
See Lemma 2.7 of \cite{kimura-2}.
\end{proof}
By virtue of the isomorphism in Lemma \ref{lem:radon-conf-1}, we
determine the characters of $\tilde{J}_{r}(p)$ as a consequence of
Lemmas \ref{lem:radon-conf-1-1} and \ref{lem:Radon-conf-2-1}.
\begin{prop}
\label{prop:Radon-conf-3-1}Any character $\chi_{p}:\tilde{J}_{r}(p)\to\cbatu$
is given by 
\[
\chi_{p}(h;\al)=(\det h_{0})^{\al_{0}}\exp\left(\sum_{1\leq i<p}\al_{i}\Tr\,\theta_{i}(\underline{h})\right),
\]
for some $\al=(\al_{0},\al_{1},\dots,a_{p-1})\in\C^{p}$, where $\underline{h}\in\jro(p)$
is defined by $h=\sum_{0\leq i<p}h_{i}w^{i}=\sum_{0\leq i<p}h_{0}(h_{0}^{-1}h_{i})w^{i}=h_{0}\cdot\underline{h}$.
\end{prop}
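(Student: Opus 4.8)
The plan is to build the character $\chi_p$ on $\tilde J_r(p)$ by transporting the structure through the semidirect product decomposition of Lemma~\ref{lem:radon-conf-1}. Recall $J_r(p)\simeq\GL r\ltimes\jro(p)$, and passing to universal coverings, $\tilde J_r(p)\simeq\widetilde{\GL}(r)\ltimes\jro(p)$ (note $\jro(p)$ is already simply connected, so it is its own universal cover). A character $\chi_p:\tilde J_r(p)\to\cbatu$ is a homomorphism into an abelian group, hence it kills the commutator subgroup; in particular its restrictions $\chi_p|_{\widetilde{\GL}(r)}$ and $\chi_p|_{\jro(p)}$ are themselves characters, and because $\cbatu$ is abelian the formula $\chi_p(g\cdot h)=\chi_p|_{\widetilde{\GL}(r)}(g)\,\chi_p|_{\jro(p)}(h)$ recovers $\chi_p$ from these two restrictions. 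First I would invoke Lemma~\ref{lem:radon-conf-1-1} to write $\chi_p|_{\widetilde{\GL}(r)}(g)=(\det g)^{\al_0}$ for some $\al_0\in\C$, and Lemma~\ref{lem:Radon-conf-2-1} to write $\chi_p|_{\jro(p)}(k)=\exp\bigl(\sum_{1\le i<p}\al_i\Tr\,\te_i(k)\bigr)$ for some $(\al_1,\dots,\al_{p-1})\in\C^{p-1}$.

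Next I would translate the abstract decomposition $h=g\cdot k$ into the concrete coordinates of the statement. Given $h=\sum_{0\le i<p}h_iw^i\in J_r(p)$ with $h_0\in\GL r$, write $h=h_0\cdot\underline h$ where $\underline h:=\sum_{0\le i<p}(h_0^{-1}h_i)w^i\in\jro(p)$; this is exactly the identification in Lemma~\ref{lem:radon-conf-1} with $g=h_0$ and $k=\underline h$ (one checks the leading coefficient of $\underline h$ is $1_r$, so $\underline h$ genuinely lies in $\jro(p)$). Lifting to the universal cover, $h_0$ is replaced by a chosen lift in $\widetilde{\GL}(r)$; the multivaluedness in $\det h_0$ is precisely what the passage to $\tilde J_r(p)$ absorbs, so $(\det h_0)^{\al_0}$ is well defined on $\tilde J_r(p)$. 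Substituting $g=h_0$, $k=\underline h$ into $\chi_p(g\cdot k)=(\det g)^{\al_0}\exp\bigl(\sum_i\al_i\Tr\,\te_i(k)\bigr)$ yields exactly
\[
\chi_p(h;\al)=(\det h_0)^{\al_0}\exp\Bigl(\sum_{1\le i<p}\al_i\Tr\,\te_i(\underline h)\Bigr),
\]
which is the asserted formula. For the converse direction, one checks directly that this expression is multiplicative: using $\widetilde{\GL}(r)$-invariance of the $\Tr\,\te_i$ (they are conjugation-invariant, being traces of polynomials in the $k_i$, which is why the semidirect-product action is harmless) together with Lemmas~\ref{lem:radon-conf-1-1} and~\ref{lem:Radon-conf-2-1}, multiplicativity of $\chi_p$ follows from multiplicativity of each factor.

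The main obstacle — and really the only nontrivial point — is the compatibility of the two factors under the semidirect product: one must verify that $\te_i(g^{-1}kg)$ relates to $\te_i(k)$ in a way that does not spoil multiplicativity of the candidate formula when $h=g_1k_1$ and $h'=g_2k_2$ are multiplied, i.e.\ that the cross term arising from reordering $k_1g_2$ into $g_2(g_2^{-1}k_1g_2)$ is absorbed by the conjugation-invariance of $\Tr\,\te_i$. Since $\te_i$ transforms by $\te_i(g^{-1}kg)=g^{-1}\te_i(k)g$ (this follows from $\log(g^{-1}kg)=g^{-1}(\log k)g$ and the grading), taking $\Tr$ kills the conjugation, so $\Tr\,\te_i(\underline h)$ depends only on the $\jro(p)$-component up to this harmless adjustment and everything goes through. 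This is exactly the content already encapsulated in Lemmas~\ref{lem:radon-conf-1}, \ref{lem:radon-conf-1-1} and~\ref{lem:Radon-conf-2-1}, so the proof is essentially an assembly of those three results, and I would keep it to a few lines, referring to Lemma~2.8 of~\cite{kimura-2} for the details if desired.
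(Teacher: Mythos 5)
Your proposal is correct and follows essentially the same route as the paper, which derives the proposition by combining the semidirect-product decomposition $J_r(p)\simeq\GL r\ltimes\jro(p)$ of Lemma \ref{lem:radon-conf-1} with Lemmas \ref{lem:radon-conf-1-1} and \ref{lem:Radon-conf-2-1}; you merely make explicit the details (restriction to the two factors, the identification $h=h_0\cdot\underline{h}$, and the conjugation-invariance of $\Tr\,\te_i$ ensuring compatibility) that the paper leaves implicit.
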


Now the characters of the group $\tilde{H}_{\lm}$ are given as follows.
\begin{prop}
\label{prop:char-conf}For a character $\chi_{\lm}:\tH_{\lm}\to\cbatu$,
there exists $\alpha=(\alpha^{(1)},\dots,\alpha^{(\ell)})\in\C^{n}$,
$\alpha^{(k)}=(\alpha_{0}^{(k)},\alpha_{1}^{(k)},\dots,\alpha_{n_{k}-1}^{(k)})\in\C^{n_{k}}$
such that 
\[
\chi_{\lm}(h;\al)=\prod_{1\leq k\leq\ell}\chi_{n_{k}}(h^{(k)};\al^{(k)}),\quad h=(h^{(1)},\cdots,h^{(\ell)})\in\tilde{H}_{\lm},\;h^{(k)}\in\tilde{J}_{r}(n_{k}).
\]
\end{prop}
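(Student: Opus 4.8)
The plan is to deduce this from Proposition \ref{prop:Radon-conf-3-1} by exploiting the product structure of $H_{\lm}$. Recall that $H_{\lm}\simeq J_{r}(n_{1})\times\cdots\times J_{r}(n_{\ell})$ as complex Lie groups, via $(h^{(1)},\dots,h^{(\ell)})\mapsto\diag(h^{(1)},\dots,h^{(\ell)})$. Since the fundamental group of a product of connected Lie groups is the product of the fundamental groups, the universal covering group inherits the decomposition
\[
\tH_{\lm}\;\simeq\;\tilde{J}_{r}(n_{1})\times\cdots\times\tilde{J}_{r}(n_{\ell}),
\]
and under this identification each $\tilde{J}_{r}(n_{k})$ sits inside $\tH_{\lm}$ as the subgroup of elements whose components away from the $k$-th slot are trivial; in particular the images of distinct factors commute in $\tH_{\lm}$.

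Next, let $\chi_{\lm}:\tH_{\lm}\to\cbatu$ be a character. Restricting $\chi_{\lm}$ to the $k$-th factor gives a homomorphism $\chi_{n_{k}}:=\chi_{\lm}|_{\tilde{J}_{r}(n_{k})}:\tilde{J}_{r}(n_{k})\to\cbatu$, that is, a character of $\tilde{J}_{r}(n_{k})$. Writing $h=(h^{(1)},\dots,h^{(\ell)})\in\tH_{\lm}$ as the product of its one-slot components and using that these components commute and that $\chi_{\lm}$ is multiplicative, we get
\[
\chi_{\lm}(h;\al)\;=\;\prod_{1\le k\le\ell}\chi_{n_{k}}(h^{(k)}).
\]

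Finally, I would apply Proposition \ref{prop:Radon-conf-3-1} to each character $\chi_{n_{k}}$ of $\tilde{J}_{r}(n_{k})$, obtaining a vector $\al^{(k)}=(\al_{0}^{(k)},\dots,\al_{n_{k}-1}^{(k)})\in\C^{n_{k}}$ for which $\chi_{n_{k}}$ takes the explicit shape of Proposition \ref{prop:Radon-conf-3-1}. Collecting these and using $n_{1}+\cdots+n_{\ell}=n$ yields $\al=(\al^{(1)},\dots,\al^{(\ell)})\in\C^{n}$ and the asserted formula for $\chi_{\lm}$; conversely any such product is visibly a character of $\tH_{\lm}$. The argument is essentially formal once the product decomposition of the universal cover is in place, so the only point meriting care is that passage to universal covers is compatible with direct products — hence that a character of the total group is determined by, and factors as a product of, its restrictions to the factors — after which the rest is bookkeeping of the index sets.
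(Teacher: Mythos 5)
Your argument is correct and matches what the paper intends: the paper states Proposition \ref{prop:char-conf} without proof, treating it as an immediate consequence of the decomposition $\tH_{\lm}\simeq\tilde{J}_{r}(n_{1})\times\cdots\times\tilde{J}_{r}(n_{\ell})$ and Proposition \ref{prop:Radon-conf-3-1}, which is exactly the route you take (universal cover of a product is the product of universal covers, a character of a product is the product of its restrictions to the commuting factors, then apply the classification on each factor).
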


\begin{cor}
In the case $\lm=(1,\dots,1)$, A character $\chi:=\chi_{\lm}$ has
the form 
\[
\chi(h;\al)=\prod_{1\leq k\leq n}(\det h^{(k)})^{\al^{(k)}},\quad h=\diag(h^{(1)},\dots,h^{(n)}),\quad h^{(k)}\in\widetilde{\Gl}(r)
\]
with $\al=(\al^{(1)},\dots,\al^{(n)})\in\C^{n}$. 
\end{cor}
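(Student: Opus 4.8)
The plan is to read off the result by specializing the two preceding propositions to the extreme partition $\lm=(1,\dots,1)$, for which $\ell=n$ and $n_{k}=1$ for every $k$. First I would note that $J_{r}(1)=\GL r$: in the block presentation of the Jordan group there are no off-diagonal blocks $h_{1},\dots,h_{p-1}$ when $p=1$, so $J_{r}(1)=\{h_{0}\mid h_{0}\in\GL r\}$ and the unipotent part $\jro(1)=\{1_{r}\}$ is trivial. Passing to universal covers gives $\tilde{J}_{r}(1)=\widetilde{\Gl}(r)$ and hence $\tH_{\lm}\simeq\widetilde{\Gl}(r)^{n}$, consistently with the isomorphism $H_{\lm}\simeq J_{r}(n_{1})\times\cdots\times J_{r}(n_{\ell})$ recorded above.

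Next I would invoke Proposition \ref{prop:char-conf}: any character $\chi_{\lm}$ of $\tH_{\lm}$ has the form $\prod_{1\leq k\leq n}\chi_{n_{k}}(h^{(k)};\al^{(k)})=\prod_{1\leq k\leq n}\chi_{1}(h^{(k)};\al^{(k)})$, where $\al^{(k)}\in\C^{n_{k}}=\C^{1}$ is a single complex number. Then I would apply Proposition \ref{prop:Radon-conf-3-1} with $p=1$: the sum $\sum_{1\leq i<1}\al_{i}\Tr\,\te_{i}(\underline{h})$ is empty and $\underline{h}=1_{r}$, so $\chi_{1}(h^{(k)};\al^{(k)})=(\det h^{(k)})^{\al^{(k)}}$. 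Combining the two displays yields $\chi(h;\al)=\prod_{1\leq k\leq n}(\det h^{(k)})^{\al^{(k)}}$ with $\al=(\al^{(1)},\dots,\al^{(n)})\in\C^{n}$, which is the asserted form; the converse---that every such product is indeed a character---is immediate and is also contained in the converse halves of Lemmas \ref{lem:radon-conf-1-1} and \ref{lem:Radon-conf-2-1}.

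There is essentially no obstacle: the corollary is just the transcription of the general character formula to the case $n_{1}=\cdots=n_{\ell}=1$. The only points I would state explicitly are the identification $J_{r}(1)=\GL r$---so that the $\te_{i}$-part of the character disappears entirely---and the dimension count, namely that each $n_{k}=1$ forces exactly $\sum_{k}n_{k}=n$ scalar parameters, matching the claimed $\al\in\C^{n}$, so that the indexing is visibly consistent with Proposition \ref{prop:char-conf}.
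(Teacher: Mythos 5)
Your argument is correct and is exactly the specialization the paper intends: the corollary follows from Proposition \ref{prop:char-conf} together with Proposition \ref{prop:Radon-conf-3-1} at $p=1$, where $J_{r}(1)=\GL r$ and the $\te_{i}$-part of the character is empty. The paper gives no explicit proof of the corollary, and your derivation supplies precisely the routine verification that is left implicit.
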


\subsection{Definition of HGF of type $\protect\lm$}

Let a character $\chi_{\lm}:=\chi_{\lm}(\cdot;\al)$ of $\tilde{H}_{\lm}$
be given. To define the HGF as a Radon transform of the character
$\chi_{\lm},$ we prepare the space of independent variables of the
HGF. Let $m$ be an integer such that $r<m<N$ and define a Zariski
open subset $Z\subset\mat'(m,N)$ as follows, where $\mat'(m,N)=\{z\in\mat(m,N)\mid\rank\,z=m\}$.
According as the partition $\lm=(n_{1},\dots,n_{\ell})$ of $n$,
we write $z\in\mat'(m,N)$ as
\[
z=(z^{(1)},\dots,z^{(\ell)}),\;z^{(j)}=(z_{0}^{(j)},\dots,z_{n_{j}-1}^{(j)}),\;z_{k}^{(j)}\in\mat(m,r)
\]
and put 
\[
Z=\{z=(z^{(1)},\dots,z^{(\ell)})\in\mat'(m,N)\mid\rank z_{0}^{(k)}=r\;(1\leq k\leq\ell)\}.
\]
Also we take $T=\gras(r,m)=\GL r\setminus\mat'(r,m)$ as the space
of integration variables. We denote by $t=(t_{a,b})\in\mat'(r,m)$
the homogeneous coordinates of $T$ and by $[t]$ the point of $T$
with the homogeneous coordinate $t$.

For $z\in Z$, consider $\chi_{\lm}(tz;\al)$, where $tz=(tz^{(1)},\dots,tz^{(\ell)})\in\mat(r,N)$.
Here, $tz^{(j)}=(tz_{0}^{(j)},\dots,tz_{n_{j}-1}^{(j)})$ is identified
with $\sum_{0\leq k<n_{j}}tz_{k}^{(j)}\otimes\La^{k}\in J_{r}(n_{j})$
and $tz$ is identified with $\diag(tz^{(1)},\dots,tz^{(\ell)})\in H_{\lm}$.

Assume here that the character $\chi_{\lm}$ satisfies the following
condition.
\begin{assumption}
\label{assu:Radon-conf-4}(i) $\al_{0}^{(j)}\notin\Z$ for $1\leq j\leq\ell$,

(ii) $\al_{n_{j}-1}^{(j)}\neq0$ if $n_{j}\geq2$,

(iii) $\al_{0}^{(1)}+\cdots+\al_{0}^{(\ell)}=-m$.
\end{assumption}

By Assumption \ref{assu:Radon-conf-4} (iii), we see that $\chi_{\lm}(tz;\al)$
satisfies 
\begin{equation}
\chi_{\lm}((gt)z;\al)=(\det g)^{-m}\chi_{\lm}(tz;\al),\quad g\in\GL r,\label{eq:radon-1}
\end{equation}
which implies that $\chi_{\lm}(tz;\al)$ gives a multivalued analytic
section of the line bundle on $T$ associated with the character $\rho_{m}:\GL r\to\cbatu,\rho_{m}(g)=(\det g)^{m}$.
The branch locus of $\chi_{\lm}(tz;\al)$ on $T$ is 
\[
\bigcup_{1\leq j\leq\ell}S_{z}^{(j)},\quad S_{z}^{(j)}:=\{[t]\in T\mid\det(tz_{0}^{(j)})=0\}.
\]
Put $X_{z}:=T\setminus\cup_{1\leq j\leq\ell}S_{z}^{(j)}$, which is
a complement of the arrangement $\{S_{z}^{(1)},\dots,S_{z}^{(\ell)}\}$
of hypersurfaces of degree $r$ in $T$. 

We need $\tau(t)$, an $r(m-r)$-form in $t$-space, which can be
given as follows. For the homogeneous coordinates $t$ of $T$, put
$t=(t',t'')$ with $t'\in\mat(r),t''\in\mat(r,m-r)$ and consider
the affine neighbourhood $U=\{[t]\in T\mid\det t'\neq0\}$. Then we
can take affine coordinates $u$ of $U$ defined by $u=(t')^{-1}t''$.
Put $du:=\wedge_{i,j}du_{i,j}$, then we give $\tau(t)$ by
\begin{equation}
\tau(t)=(\det t')^{m}du.\label{eq:radon-2}
\end{equation}

\begin{example*}
In the case $T=\gras(1,m)=\Ps^{m-1}$ with the homogeneous coordinates
$t=(t_{1},\dots,t_{m})$, we take $\tau(t)=\sum_{1\leq j\leq m}(-1)^{j+1}t_{j}dt_{1}\wedge\cdots\wedge\widehat{dt_{j}}\wedge\cdots\wedge dt_{m}$.
Then in the coordinate neighbourhood $U=\{[t]\in T\mid t_{1}\neq0\}$
with the affine coordinates $(u_{2},\dots,u_{m})=(t_{2}/t_{1},\dots,t_{m}/t_{1})$,
we have 
\[
\tau(t)=t_{1}^{m}d\left(\frac{t_{2}}{t_{1}}\right)\wedge\cdots\wedge d\left(\frac{t_{m}}{t_{1}}\right)=t_{1}^{m}du_{2}\wedge\cdots\wedge du_{m}.
\]
\end{example*}
For $\tau(t)$ given by (\ref{eq:radon-2}), we have 
\begin{equation}
\tau(gt)=(\det g)^{m}\tau(t),\quad g\in\GL r.\label{eq:radon-3}
\end{equation}
Then, by virtue of (\ref{eq:radon-1}) and (\ref{eq:radon-3}), we
see that $\chi_{\lm}(tz;\al)\cdot\tau(t)$ gives a multivalued $r(m-r)$-form
on $X_{z}$. 
\begin{defn}
For a character $\chi_{\lm}(\cdot;\al)$ of the group $\tH_{\lm}$
satisfying Assumption \ref{assu:Radon-conf-4}, 
\begin{equation}
F_{\lm}(z,\al;C)=\int_{C(z)}\chi_{\lm}(tz;\al)\cdot\tau(t)\label{eq:confl-1}
\end{equation}
is called the Radon HGF of type $\lm$. Here $C(z)$ is an $r(m-r)$-cycle
of the homology group of locally finite chains $H_{r(m-r)}^{\Phi_{z}}(X_{z};\cL_{z})$
of $X_{z}$ with coefficients in the local system $\cL_{z}$ and with
the family of supports $\Phi_{z}$ determined by $\chi_{\lm}(tz;\al)$. 
\end{defn}

We briefly explain about the homology group $H_{r(m-r)}^{\Phi_{z}}(X_{z};\cL_{z})$.
For the detailed explanation, we refer to \cite{kimura-2} and references
therein. Write the integrand of (\ref{eq:confl-1}) as 
\[
\chi_{\lm}(tz;\al)=f(t,z)\exp(g(t,z)),
\]
where
\[
f(t,z)=\prod_{j=1}^{\ell}\left(\det(tz_{0}^{(j)})\right)^{\al_{0}^{(j)}},\quad g(t,z)=\sum_{1\leq j\leq n}\sum_{1\leq k<n_{j}}\al_{k}^{(j)}\Tr\,\theta_{k}(\underline{tz}^{(k)}).
\]
Note that $f(t,z)\cdot\tau(t)$ concerns the multivalued nature of
the integrand whose ramification locus is $\cup_{j}S_{z}^{(j)}$.
On the other hand, $g(t,z)$ is a rational function on $T$ with a
pole divisor $\cup_{j;n_{j}\geq2}S_{z}^{(j)}$ and concerns the nature
of exponential increase to infinity or exponential decrease to zero
of the integrand when $[t]$ approaches to the pole divisor $\cup_{j;n_{j}\geq2}S_{z}^{(j)}$.
The monodromy of $f(t,z)\cdot\tau(t)$, which is the same as that
of $\chi_{\lm}(tz;\al)\cdot\tau(t)$, defines a rank one local system
$\cL_{z}$ on $X_{z}$. On the other hand, $g_{z}:=g|_{X_{z}}:X_{z}\to\C$
defines a family $\Phi_{z}$ of closed subsets of $X_{z}$ by the
condition
\[
A\in\Phi_{z}\iff A\cap g_{z}^{-1}(\{w\in\C\mid\mathrm{Re}\,w\geq a\})\ \ \mbox{is compact for any \ensuremath{a\in\R}}.
\]
Then $\Phi_{z}$ satisfies the condition of a family of supports \cite{kimura-2,Pham-1,Pham-2}
and we can consider a homology groups of locally finite chains with
coefficients in the local system $\cL_{z}$ whose supports belong
to $\Phi_{z}$. This homology group is denoted by $H_{\bullet}^{\Phi_{z}}(X_{z};\cL_{z})$.
Moreover there is a Zariski open subset $V\subset Z$ such that 

\[
\bigcup_{z\in V}H_{r(m-r)}^{\Phi_{z}}(X_{z};\cL_{z})\to V,
\]
which maps $H_{r(m-r)}^{\Phi_{z}}(X_{z};\cL_{z})$ to $z$, gives
a local system on $V$ \cite{kimura-2}. We take its local section
as $C=\{C(z)\}$ to obtain the Radon HGF of type $\lm$. 

We give an expression of $F_{\lm}$ in terms of the affine coordinates
$u=(u_{i,j})=(t')^{-1}t''$ of the chart $U=\{[t]\in T\mid\det t'\neq0\}$.
Using (\ref{eq:radon-1}) and (\ref{eq:radon-2}), we have 
\begin{align*}
F_{\lm}(z,\al;C) & =\int_{C(z)}\chi_{\lm}(\vec{u}z;\al)du\\
 & =\int_{C(z)}\prod_{j=1}^{\ell}\left(\det(\vec{u}z_{0}^{(j)})\right)^{\al_{0}^{(j)}}\exp\left(\sum_{1\leq j\leq n}\sum_{1\leq k<n_{j}}\al_{k}^{(j)}\Tr\,\theta_{k}(\underline{\vec{u}z}^{(k)})\right)du,
\end{align*}
where $\vec{u}=(1_{r},u)$. In the case $\lm=(1,\dots,1)$, the Radon
HGF is written as 
\[
F(z,\al;C)=\int_{C(z)}\prod_{j=1}^{n}\left(\det(tz^{(j)})\right)^{\al^{(j)}}\tau(t)=\int_{C(z)}\prod_{j=1}^{n}\left(\det(\vec{u}z^{(j)})\right)^{\al^{(j)}}du
\]
and is said to be of \emph{non-confluent type}. 

We give an important property for the Radon HGF which states the covariance
of the function under the action of $\GL m\times\hlam$ on $Z$. we
see that the action
\[
\GL m\times\mnm\times\hlam\ni(g,z,h)\mapsto gzh\in\mnm
\]
induces that on the set $Z$. The following is Proposition 2.12 of
\cite{kimura-2}.
\begin{prop}
\label{prop:covariance-1}For the Radon HGF of type $\lm$, we have
the formulae

(1) $F_{\lm}(gz,\al;C)=\det(g)^{-r}F_{\lm}(z,\al;\tilde{C}),\quad g\in\GL m,$

(2) $F_{\lm}(zh,\al;C)=F_{\lm}(z,\al;C)\chi_{\lm}(h;\al),\quad h\in\tilde{H}_{\lm}$.
\end{prop}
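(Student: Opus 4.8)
The plan is to derive both formulae from two ingredients: an explicit transformation law of the integrand $\chi_\lm(tz;\al)\tau(t)$ of (\ref{eq:confl-1}) under the relevant change of variables, and the observation that the attached data $X_z,\cL_z,\Phi_z$ are either unchanged or carried over by a biholomorphism, so that the cycle $C$ is transported accordingly. I would prove (2) first, since it needs no change of variables.

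For (2), let $h=\diag(h^{(1)},\dots,h^{(\ell)})\in\tH_\lm$ with $h^{(j)}\in\tilde J_r(n_j)$. Reading products in $\mat(r,n_j r)$ inside $R[w]/(w^{n_j})$ ($R=\mat(r)$), the Toeplitz identity $(a\cdot h)_k=\sum_i a_i h_{k-i}$ gives $(zh)_0^{(k)}=z_0^{(k)}h_0^{(k)}$ and, for $[t]\in X_z$, the identity $t(zh)^{(j)}=(tz^{(j)})\cdot h^{(j)}$ of units of $R[w]/(w^{n_j})$. Hence $zh\in Z$ (the $k$-th leading block still has rank $r$ since $h_0^{(k)}\in\GL r$), $X_{zh}=X_z$ because $\det(tz_0^{(j)}h_0^{(j)})=\det(tz_0^{(j)})\det h_0^{(j)}$ with $\det h_0^{(j)}\neq0$, and $t(zh)=(tz)h$ in $H_\lm$. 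Since $\chi_\lm$ is a character of $\tH_\lm$, this gives
\[
\chi_\lm(t(zh);\al)=\chi_\lm(h;\al)\,\chi_\lm(tz;\al)
\]
for the branch of $\chi_\lm(tz;\al)$ singled out by $C$, with $\chi_\lm(h;\al)$ a constant independent of $t$. Multiplying the integrand by this constant changes neither its monodromy (so $\cL_{zh}=\cL_z$) nor the support family $\Phi_z$ --- it only shifts the rational function $g(t,z)$ by a constant, and the defining condition of $\Phi_z$ is insensitive to such a shift. Therefore the horizontal section $C$ has $C(zh)=C(z)$, and pulling $\chi_\lm(h;\al)$ out of $\int_{C(zh)}\chi_\lm(t(zh);\al)\tau(t)$ proves (2).

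For (1), fix $g\in\GL m$; then $gz\in Z$ (left multiplication by an element of $\GL m$ preserves all the relevant ranks). Let $\bar R_g:T\to T$, $[t]\mapsto[tg]$, be the induced biholomorphism. The crux is the transformation law of $\tau$ on $\mat'(r,m)$,
\[
\tau(tg)=(\det g)^{r}\,\tau(t).
\]
In the chart $u=(t')^{-1}t''$ one has $\tau(tg)=(\det t')^{m}(\det(A+uC))^{m}\,J(u)\,du$, where $A,B,C,D$ are the blocks of $g$ for the $(r,m-r)$-partition ($A$ of size $r$, $D$ of size $m-r$) and $J(u)$ is the Jacobian determinant of the matrix linear-fractional map $u\mapsto(A+uC)^{-1}(B+uD)$; so the law is equivalent to the classical Jacobian formula $J(u)=(\det g)^{r}(\det(A+uC))^{-m}$, which one may verify by reducing $g$ to generators of $\GL m$. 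Granting it: from $(gz)_0^{(k)}=gz_0^{(k)}$ we get $S_{gz}^{(j)}=\bar R_g^{-1}(S_z^{(j)})$, so $\bar R_g$ restricts to a biholomorphism $X_{gz}\to X_z$; moreover $f(t,gz)=f(tg,z)$, and likewise for the exponent $g$, so $\bar R_g$ identifies $\cL_{gz}$ with $\bar R_g^{*}\cL_z$ and $\Phi_{gz}$ with the pullback of $\Phi_z$, hence induces an isomorphism of the twisted locally finite homology groups. Since $\chi_\lm(t\cdot gz;\al)=\chi_\lm((tg)z;\al)$, the displayed law shows that the integrand for $gz$ equals $(\det g)^{-r}$ times $\bar R_g^{*}$ of the integrand for $z$; with $\tilde C(z):=(\bar R_g)_*C(gz)$ and $\int_{C(gz)}\bar R_g^{*}\omega=\int_{(\bar R_g)_*C(gz)}\omega$ this yields $F_\lm(gz,\al;C)=(\det g)^{-r}F_\lm(z,\al;\tilde C)$.

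The one genuinely nontrivial step is the Jacobian identity $J(u)=(\det g)^{r}(\det(A+uC))^{-m}$, equivalently the transformation law of $\tau$ under the right $\GL m$-action; the remainder is bookkeeping --- associativity of matrix multiplication, the Toeplitz description of the group law in $J_r(p)$, multiplicativity of a character, and the insensitivity of $\cL_z$ and $\Phi_z$ to scaling the integrand by a constant. A secondary point, in (2), is that $\chi_\lm(t(zh);\al)=\chi_\lm(h;\al)\chi_\lm(tz;\al)$ has to be read on the universal cover $\tH_\lm$, i.e.\ for the branch of $\chi_\lm(tz;\al)$ continued along $C$, so that no spurious unit factors intrude.
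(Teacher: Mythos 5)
The paper does not actually prove this proposition here --- it is quoted as Proposition 2.12 of \cite{kimura-2} --- so there is no in-paper proof to compare against; judged on its own, your argument is correct and is the natural one. Part (2) is exactly associativity $t(zh)=(tz)h$ inside $H_{\lm}$ plus multiplicativity of $\chi_{\lm}$ on $\tH_{\lm}$ (with the branch/cover bookkeeping you flag, and with the correct observation that multiplying the integrand by the nonzero constant $\chi_{\lm}(h;\al)$ leaves $\cL_{z}$ and $\Phi_{z}$ untouched), and part (1) reduces to the transformation law $\tau(tg)=(\det g)^{r}\tau(t)$, whose Jacobian identity $J(u)=(\det g)^{r}\det(A+uC)^{-m}$ is the standard cocycle for the big cell of $\gras(r,m)$ and is consistent both with your generator check and with the left-action law (\ref{eq:radon-3}) applied to scalar matrices.
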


\section{Contiguity relations}

\subsection{Capelli identity and Cayley's formula}

Let $\mathcal{P}(\mat(r))$ be the ring of polynomials on the matrix
space $\mat(r)$ with the coordinates $(x_{i,j})$. Put $\pa_{i,j}:=\pa/\pa x_{i,j}$.
Let $E_{i,j}$ be the $(i,j)$-th matrix unit, namely a matrix unit
whose unique nonzero entry is the $(i,j)$-th entry. Let $E'_{i,j}$
be the left invariant vector field on $\mat(r)$ corresponding to
$E_{i,j}\in\gl(r)$:
\[
E'_{i,j}f:=\frac{d}{ds}f(x\,\expo(sE_{i,j}))|_{s=0},\qquad E'_{i,j}=\sum_{a=1}^{r}x_{a,i}\pa_{a,j}.
\]
 The following theorem is known as the Capelli identity. See p507
of \cite{Fulton-Harris}.
\begin{thm*}
(Capelli) We have the identity
\begin{equation}
\det(E'_{i,j}+(r-j)\de_{i,j})=\det(x_{i,j})\det(\pa_{i,j}),\label{eq:capelli-1}
\end{equation}
where the determinant in the left hand side is a ``column determinant'',
namely, for an $r\times r$ matrix $A=(a_{i,j})$ with entries in
a noncommutative ring, we define 
\[
\det A:=\sum_{\sm\in\mathfrak{S}_{r}}\sgn(\sm)a_{\sm(1),1}a_{\sm(2),2}\cdots a_{\sm(r),r}.
\]
\end{thm*}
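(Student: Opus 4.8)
The plan is to prove Capelli's identity by a direct expansion of the column determinant, organised so that a single ``main term'' already equals $\det(x_{i,j})\det(\pa_{i,j})$ and all the remaining terms are shown to cancel. First I record the commutation relations coming from $E'_{i,j}=\sum_{a}x_{a,i}\pa_{a,j}$ and $[\pa_{a,j},x_{b,c}]=\de_{a,b}\de_{j,c}$:
\[
[E'_{i,j},E'_{k,l}]=\de_{j,k}E'_{i,l}-\de_{i,l}E'_{k,j},\qquad [E'_{i,j},x_{k,l}]=\de_{j,l}\,x_{k,i},\qquad [E'_{i,j},\pa_{k,l}]=-\de_{i,l}\,\pa_{k,j},
\]
the first saying that $(E'_{i,j})$ realises $\gl(r)$ and the other two that $(x_{i,j})$ and $(\pa_{i,j})$ are covariant for this action. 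Writing $\det(E'_{i,j}+(r-j)\de_{i,j})=\sum_{\sm\in\mathfrak{S}_{r}}\sgn(\sm)\prod_{k=1}^{r}\bigl(E'_{\sm(k),k}+(r-k)\de_{\sm(k),k}\bigr)$ and, in each monomial, moving every coordinate $x_{a,b}$ to the left of every $\pa_{a,b}$ by the relation $[\pa_{a,j},x_{b,c}]=\de_{a,b}\de_{j,c}$, the ``main term'' — the summand in which no $\pa$ ever meets an $x$ and no shift $(r-k)\de_{\sm(k),k}$ is ever invoked — is $\sum_{\sm}\sgn(\sm)\sum_{a_{1},\dots,a_{r}}x_{a_{1},\sm(1)}\cdots x_{a_{r},\sm(r)}\,\pa_{a_{1},1}\cdots\pa_{a_{r},r}$. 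Performing the $\sm$-sum first turns the coefficient of $\pa_{a_{1},1}\cdots\pa_{a_{r},r}$ into $\det(x_{a_{k},l})_{k,l}$, which vanishes unless $(a_{1},\dots,a_{r})$ is a permutation $\rho$ and is then $\sgn(\rho)\det(x_{i,j})$; hence the main term equals $\det(x_{i,j})\det(\pa_{i,j})$ exactly.

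It therefore remains to show that the sum of all the other terms is zero. These are of two kinds: ``shift terms'', where some factors $E'_{\sm(k),k}$ have been replaced by $(r-k)\de_{\sm(k),k}$, and ``contraction terms'', where moving a $\pa_{a_{i},i}$ past an $x_{a_{j},\sm(j)}$ has created a Kronecker delta $\de_{a_{i},a_{j}}\de_{i,\sm(j)}$; each kind amounts to deleting certain index pairs from the permutation expansion, and the heart of the proof is the combinatorial identity that, after summing over $\sm$ and over the surviving row-indices with the sign $\sgn(\sm)$, the shift terms and the contraction terms cancel one another. Carrying out this cancellation — while keeping strict control of the order of the non-commuting factors and of the signs — is the main obstacle; it is precisely here that the value $r-j$ of the diagonal shift is forced, the scalar produced by collapsing an $x\pa$-block at column position $j$ having to match the multiplicity of the corresponding self-contraction, which is governed by the number $r-j$ of row-indices still free at that stage.

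If this bookkeeping becomes unwieldy, one may instead induct on $r$: expanding the column determinant along the last column, whose entries $E'_{i,r}$ carry no shift, and using $[E'_{i,r},E'_{k,l}]=-\de_{i,l}E'_{k,r}$ for $k,l<r$ together with the relations above to rewrite the cofactors, reduces the identity to size $r-1$; the base case $r=1$ is the trivial equality $x_{1,1}\pa_{1,1}=x_{1,1}\pa_{1,1}$, where the shift is absent, matching the elementary $\tfrac{d}{du}u^{s}=s\,u^{s-1}$ of the introduction. A further alternative is invariant-theoretic: both sides are left-$\GL r$-equivariant differential operators on $\cP(\mat(r))$ of the same order and bidegree in $(x_{i,j})$ and $(\pa_{i,j})$, so Schur's lemma reduces the identity to matching the two sides on one polynomial in each isotypic component, once one checks that the constant diagonal shift is exactly what keeps the column-determinant side inside this invariant algebra.
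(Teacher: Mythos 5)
The paper offers no proof of this theorem: it is quoted from p.~507 of \cite{Fulton-Harris}, so your proposal has to be measured against the standard arguments in the literature rather than against anything in the text. Your preliminaries are sound: the three commutation relations are correct, and the identification of the ``main term'' of the normal-ordered expansion with $\det(x_{i,j})\det(\pa_{i,j})$ is a complete and correct computation. That part I would accept as written.

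The genuine gap is exactly where you locate ``the heart of the proof.'' The assertion that, after summing over $\sm$ with the sign $\sgn(\sm)$ and over the surviving row indices, the shift terms and the contraction terms cancel is not something you establish --- it \emph{is} the Capelli identity, merely restated in the language of your expansion; the main term is the easy half, and the entire content of the theorem is this cancellation, which your proposal explicitly defers as ``the main obstacle.'' The heuristic offered for the value $r-j$ is also not substantiated: a contraction between positions $k<l$ produces $\de_{a_{k},a_{l}}\de_{k,\sm(l)}$ and hence occurs only when $\sm(l)=k$, so for a fixed $\sm$ the $\pa$ in column $k$ can contract with at most one later $x$ (namely the one in position $\sm^{-1}(k)$, when $\sm^{-1}(k)>k$); the constant $r-j$ does not appear as a ``multiplicity of the self-contraction'' for any single term but only after a resummation over $\sm$ and over the contracted row indices, and that resummation is precisely the work not done. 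The two fallback routes are likewise only named: the induction on $r$ requires justifying a Laplace-type expansion for the column determinant of a matrix with noncommuting entries and tracking the lower-order terms created by $[E'_{i,r},E'_{k,l}]$, and the Schur-lemma route still requires computing the eigenvalue of $\det(E'_{i,j}+(r-j)\de_{i,j})$ on a highest-weight vector in each irreducible constituent of $\cP(\mat(r))$ and matching it with that of $\det(x_{i,j})\det(\pa_{i,j})$ --- a computation at least as substantial as the direct one. As it stands the proposal is a correct plan with the decisive step missing.
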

From the Capelli identity, we can obtain an important formula called
Cayley's formula, which is obtained as an example of the theory of
prehomogeneous vector spaces. Since its proof is elementary, we give
it for the sake of completeness.
\begin{prop*}
(Cayley) For $f=\det(x_{i,j})$ we have 
\begin{equation}
\det(\pa_{i,j})f^{s}=s(s+1)\cdots(s+r-1)f^{s-1}.\label{eq:capelli-1-1}
\end{equation}
\end{prop*}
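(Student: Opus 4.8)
The plan is to read Cayley's formula off from the Capelli identity (\ref{eq:capelli-1}) by letting both sides act on $f^{s}=(\det x)^{s}$ on the Zariski open set where $f\neq0$, on which $f^{s}$ is a (locally) well-defined analytic function. Multiplying the desired identity (\ref{eq:capelli-1-1}) by $f$ and using (\ref{eq:capelli-1}), it is enough to prove
\[
\det\bigl(E'_{i,j}+(r-j)\de_{i,j}\bigr)f^{s}=s(s+1)\cdots(s+r-1)\,f^{s};
\]
one then divides by $f$, which is not identically zero, and invokes analyticity of both sides on $\{f\neq0\}$.

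First I would record the elementary fact that $E'_{i,j}f^{s}=s\,\de_{i,j}\,f^{s}$. Indeed $E'_{i,j}=\sum_{a}x_{a,i}\pa_{a,j}$ is a vector field, hence a derivation, so $E'_{i,j}f^{s}=sf^{s-1}E'_{i,j}f$; and $f(x\,\expo(sE_{i,j}))=\det(x)\det(\expo(sE_{i,j}))=\det(x)e^{s\de_{i,j}}$, whose derivative in $s$ at $s=0$ is $\de_{i,j}f$. Consequently, writing $b_{i,j}:=E'_{i,j}+(r-j)\de_{i,j}$, for every scalar $c$ one has
\[
b_{i,j}\,(c\,f^{s})=c\,(s+r-j)\,\de_{i,j}\,f^{s}.
\]

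Next I would expand the column determinant $\det(b_{i,j})=\sum_{\sm\in\Si_{r}}\sgn(\sm)\,b_{\sm(1),1}b_{\sm(2),2}\cdots b_{\sm(r),r}$ and apply it to $f^{s}$, composing the operators from right to left. The rightmost factor $b_{\sm(r),r}=E'_{\sm(r),r}$ sends $f^{s}$ to $s\,\de_{\sm(r),r}f^{s}$, which vanishes unless $\sm(r)=r$; on the surviving terms the operand is then a scalar multiple of $f^{s}$, so by the displayed identity the next factor $b_{\sm(r-1),r-1}$ forces $\sm(r-1)=r-1$, and iterating downward forces $\sm=\mathrm{id}$. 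The identity permutation has sign $+1$ and contributes $\prod_{j=1}^{r}(s+r-j)\,f^{s}=s(s+1)\cdots(s+r-1)\,f^{s}$, which is exactly what is needed.

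The one step requiring care is this last bookkeeping: since the $b_{i,j}$ do not commute, one must compose them strictly in the order dictated by the column determinant and check that, reading from the right, the operand is already a scalar multiple of $f^{s}$ at each stage, so that the triangular action $b_{i,j}(cf^{s})=c(s+r-j)\de_{i,j}f^{s}$ is legitimately applicable and the sum over $\Si_{r}$ collapses to the single term $\sm=\mathrm{id}$. Everything else---the derivation property of $E'_{i,j}$, the computation $E'_{i,j}f=\de_{i,j}f$, and the final passage from $f\cdot\det(\pa_{i,j})f^{s}$ to $\det(\pa_{i,j})f^{s}$---is routine. Note in particular that the Lie bracket relations among the $E'_{i,j}$ are not needed.
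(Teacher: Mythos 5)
Your proposal is correct and follows essentially the same route as the paper: reduce via the Capelli identity to evaluating $\det(E'_{i,j}+(r-j)\de_{i,j})$ on $f^{s}$, show that only the term $\sm=\mathrm{id}$ survives because $E'_{i,j}f^{s}=s\de_{i,j}f^{s}$, and read off the product $s(s+1)\cdots(s+r-1)$. The only cosmetic difference is that you compute $E'_{i,j}f=\de_{i,j}f$ from $\det(x\exp(sE_{i,j}))=\det(x)e^{s\de_{i,j}}$ rather than by the paper's repeated-column argument, and you spell out the right-to-left bookkeeping in the column determinant that the paper leaves implicit.
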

\begin{proof}
In the Capelli indentity (\ref{eq:capelli-1}), the operator in the
left hand side is denoted as $P$. Then 
\[
P=\sum_{\sm\in\mathfrak{S}_{r}}\sgn(\sm)(E'_{\sm(1),1}+(r-1)\de_{\sm(1),1})(E'_{\sm(2),2}+(r-2)\de_{\sm(2),2})\cdots E'_{\sm(r),r}
\]
 by definition. We apply $P$ to $f^{s}$. We assert the identity:
\begin{equation}
E'_{i,j}f^{s}=\begin{cases}
0 & i\neq j,\\
sf^{s} & i=j.
\end{cases}\label{eq:capelli-2}
\end{equation}
 In fact, since $E'_{i,j}f^{s}=s\,f^{s-1}\cdot E'_{i,j}f$, we compute
$E'_{i,j}f$ firstly in case $i\neq j$ . We have 
\[
E'_{i,j}f=(\sum_{a=1}^{r}x_{a,i}\pa_{a,j})\left|\begin{array}{ccccc}
 & x_{1,i} &  & x_{1,j}\\
\cdots & x_{2,i} & \cdots & x_{2,j} & \cdots\\
 & \vdots &  & \vdots\\
 & x_{r,i} &  & x_{r,j}
\end{array}\right|=\left|\begin{array}{ccccc}
 & x_{1,i} &  & x_{1,i}\\
\cdots & x_{2,i} & \cdots & x_{2,i} & \cdots\\
 & \vdots &  & \vdots\\
 & x_{r,i} &  & x_{r,i}
\end{array}\right|=0.
\]
In case $i=j$, similar computation shows that $E'_{i,i}f=f$. Hence
we obtain (\ref{eq:capelli-2}). Identity (\ref{eq:capelli-2}) says
that when $P$ is applied to $f^{s}$, the nonzero contribution comes
only from the term for $\sm=id$. Then we have 
\begin{align*}
Pf^{s} & =(E'_{1,1}+(r-1))(E'_{2,2}+(r-2))\cdots E'_{r,r}f^{s}\\
 & =(s+r-1)(s+r-2)\cdots(s+1)s\,f^{s},
\end{align*}
which is equal to $f\cdot\det(\pa_{ij})f^{s}$ by virtue of the Capelli
identity. Hence we obtain Cayley's formula (\ref{eq:capelli-1-1}).
\end{proof}
\begin{rem}
\label{rem:Cont-1} In case $r=1$, Cayley's formula reduces to an
obvious identity
\[
\frac{d}{dx}x^{s}=sx^{s-1}.
\]
Construction of the contiguity relations for Gelfand's HGF is based
on this simple identity. In the context of the theory of prehomogeneous
vector space, Cayley's formula can be understood as follows. A prehomogeneous
vector space is a triple $(G,\rho,V)$ of a reductive algebraic group
$G$ over $\C$, and a complex vector space $V$ on which $G$ acts
through an algebraic action $\rho$ prehomogeneously, namely, there
is a Zariski open orbit. In our case $(G,\rho,V)=(\GL r,\rho,\mat(r))$
with the action $\rho$ defined by matrix multiplication 
\[
\rho:\GL r\to\GL V,\quad\rho(g)\cdot x=gx,\quad x\in V.
\]
 The open orbit is $O=\{x\in\mat(r)\mid\det x\neq0\}$ and its complement
$S=V\setminus O$ is defined as the zero of $f=\det x$, where $f$
is called the relative invariant of the prehomogeneous vector space.
Cayley's formula gives an analytic continuation of the complex power
$f^{s}$ with respect to $s$. In this context $\bb(s):=s(s+1)\cdots(s+r-1)$
is called the $b$-function.
\end{rem}

\subsection{Contiguity relation for Radon HGF}

As in Section \ref{sec:Radon-HGF}, let $r,m,N$ be integers such
that $1\leq r<m<N$ and $N=nr$ for a positive integer $n$. 

\subsubsection{Non-confluent case}

The Radon HGF of non-confluent type corresponds to the partition $\lm=(1,\dots,1)$
of $n$. For this partition, we associate the subgroup
\[
H:=H_{(1,\dots,1)}=\left\{ A=\left(\begin{array}{ccc}
h^{(1)}\\
 & \ddots\\
 &  & h^{(n)}
\end{array}\right)\mid h^{(k)}\in\GL r\quad(1\leq k\le n)\right\} 
\]
of $G=\GL N$. Let $\gee$ and $\ha$ denote the Lie algebras of $G$
and $H$, respectively. Consider the center $\fa=\{A\in\ha\mid[A,X]=0\;\text{for}\;\forall X\in\ha\}$
of $\ha$. It is easily seen that $\fa$ is given by 
\[
\fa=\left\{ A=\left(\begin{array}{ccc}
a_{1}1_{r}\\
 & \ddots\\
 &  & a_{n}1_{r}
\end{array}\right)\mid a_{k}\in\C\quad(1\leq k\le n)\right\} ,
\]
and that $\ha$ is recovered as the centralizer of $\fa$ in $\gee$.
We consider simultaneous eigenspace decomposition of $\gee$ with
respect to the commuting family $\{\ad\,A\mid A\in\fa\}$: 
\begin{align*}
\gee & =\ha\oplus\bigoplus_{i\neq j}\gee_{\ep^{(i)}-\ep^{(j)}},\\
\gee_{\ep^{(i)}-\ep^{(j)}} & =\{X\in\gee\mid[A,X]=(\ep^{(i)}(A)-\ep^{(j)}(A))X\quad\forall A\in\fa\},
\end{align*}
where $\ep^{(i)}\in\fa^{*}$ is defined by $\ep^{(i)}(A)=a_{i}$ for
$A=\diag(a_{1}1_{r},\dots,a_{n}1_{r})\in\fa$. The eigenspaces are
described as follows. Let $X\in\gee$ be written in the form of block
matrix $X=(X^{(i,j)})_{1\leq i,j\leq n}$ , where $X^{(i,j)}\in\mat(r)$
is the $(i,j)$-th block. Then 
\begin{align*}
\gee_{\ep^{(i)}-\ep^{(j)}} & =\{X\in\gee\mid X^{(i',j')}=0\;\text{for any}\;(i',j')\neq(i,j)\}\\
 & =\bigoplus_{1\leq p,q\leq r}\C\cdot E_{p,q}^{(i,j)},
\end{align*}
where $E_{p,q}^{(i,j)}\in\gee$ is the matrix unit whose only nozero
entry locates at the $(p,q)$-th entry of the $(i,j)$-th block. Let
us give the contiguity operators in this case. For $X\in\gee$, define
the first order differential operator on $\mat'(m,N)$ by
\begin{equation}
(L_{X}f)(z):=\frac{d}{ds}f(z\cdot\exp sX)|_{s=0}.\label{eq:cont-non-0}
\end{equation}
 In particular, for $X=E_{p,q}^{(i,j)}$, $L_{X}$ is denoted as $L_{p,q}^{(i,j)}$.
Then we define the differetial operator of order $r$ by
\[
\cL^{(i,j)}:=\det(L_{p,q}^{(i,j)})_{1\leq p,q\leq r},
\]
 where the determinant is the column determinant.

Let us give the explicit form of the operator $\cL^{(i,j)}$. Taking
account that we are considering the case $\lm=(1,\dots,1)$, put 
\begin{equation}
z=(z^{(1)},\dots,z^{(n)})\in\mat'(m,N),\quad z^{(j)}=(z_{1}^{(j)},\dots,z_{r}^{(j)})=(z_{a,b}^{(j)})_{0\leq a<m,1\leq b\leq r}\in\mat'(m,r).\label{eq:cont-non-0-1}
\end{equation}
Accordingly we put 
\begin{equation}
\pa^{(j)}=(\pa_{1}^{(j)},\dots,\pa_{r}^{(j)}),\quad\pa_{q}^{(j)}=(\pa_{a,q}^{(j)})_{0\leq a<m},\quad\pa_{a,q}^{(j)}:=\frac{\pa}{\pa z_{a,q}^{(j)}}.\label{eq:cont-non-0-2}
\end{equation}

\begin{lem}
\label{lem:Cont-2}We have 
\begin{align}
L_{p,q}^{(i,j)} & =\tr z_{p}^{(i)}\pa_{q}^{(j)}=\sum_{0\leq a<m}z_{a,p}^{(i)}\pa_{a,q}^{(j)},\label{eq:cont-non-1}\\
\cL^{(i,j)} & =\det(\,^{t}z^{(i)}\pa^{(j)}).\label{eq:cont-non-2}
\end{align}
\end{lem}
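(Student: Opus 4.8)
The plan is to compute the first-order operator $L_{p,q}^{(i,j)}$ directly from its definition \eqref{eq:cont-non-0} and then assemble the column determinant. First I would observe that $X=E_{p,q}^{(i,j)}$ is the matrix unit in $\gl(N)$ whose only nonzero entry sits in the $(i,j)$-th $r\times r$ block at position $(p,q)$ within that block, so $\exp(sX)=1_{N}+sX$ (since $X^{2}=0$ when $i\neq j$; and for the diagonal case $i=j$ one still has a convenient exponential, but the contiguity operators are only used for $i\neq j$). Then $z\cdot\exp(sX)=z+sZ$ where $zX$ has a simple description: right multiplication by $E_{p,q}^{(i,j)}$ takes the $p$-th column of the $i$-th block of $z$, namely $z_{p}^{(i)}\in\mat(m,1)$, and places it into the $q$-th column of the $j$-th block, leaving all other columns zero. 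Differentiating at $s=0$ and using the chain rule gives
\[
L_{p,q}^{(i,j)}f=\sum_{0\leq a<m}z_{a,p}^{(i)}\,\frac{\pa f}{\pa z_{a,q}^{(j)}}=\tr z_{p}^{(i)}\pa_{q}^{(j)},
\]
which is \eqref{eq:cont-non-1}.

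Once \eqref{eq:cont-non-1} is in hand, \eqref{eq:cont-non-2} is essentially a repackaging: by definition $\cL^{(i,j)}=\det(L_{p,q}^{(i,j)})_{1\leq p,q\leq r}$ as a column determinant, and substituting $L_{p,q}^{(i,j)}=\tr z_{p}^{(i)}\pa_{q}^{(j)}$ we recognize the $(p,q)$-entry as exactly the $(p,q)$-entry of the formal matrix product $\,^{t}z^{(i)}\pa^{(j)}$, where $\,^{t}z^{(i)}$ is the $r\times m$ matrix with rows indexed by $p$ and columns by $a$, and $\pa^{(j)}$ is the $m\times r$ array of differential operators with $(a,q)$-entry $\pa_{a,q}^{(j)}$. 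Hence $\det(L_{p,q}^{(i,j)})=\det(\,^{t}z^{(i)}\pa^{(j)})$, both read as column determinants. I would state explicitly that the ordering convention in the column determinant matches the one fixed for the Capelli identity (namely $\sum_{\sm}\sgn(\sm)a_{\sm(1),1}\cdots a_{\sm(r),r}$), so that no reordering ambiguity arises; the entries $\tr z_{p}^{(i)}\pa_{q}^{(j)}$ for distinct $p$ all commute with one another when $i\neq j$ (they involve disjoint sets of variables $z^{(i)}$ and distinct derivatives), so in fact the column determinant coincides with the ordinary determinant here, but it is cleanest to keep the column-determinant formulation to interface with Capelli later.

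The only genuinely delicate point — and the one I would be most careful about — is bookkeeping of indices and transposes: making sure that "the $p$-th column of the $i$-th block" on the $z$-side pairs with "the $q$-th column of the $j$-th block" on the $\pa$-side in precisely the arrangement that turns the array $(L_{p,q}^{(i,j)})$ into the matrix product $\,^{t}z^{(i)}\pa^{(j)}$ rather than its transpose or $\,^{t}\pa^{(j)} z^{(i)}$. This is why the superscript $\,^{t}$ lands on $z^{(i)}$: the summation index $a$ (running over the $m$ rows) is contracted, leaving the free indices $p$ (from $z$) as the row index and $q$ (from $\pa$) as the column index of the resulting $r\times r$ operator-matrix. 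I would verify this against the definitions \eqref{eq:cont-non-0-1} and \eqref{eq:cont-non-0-2} by writing out the $r=2$ case explicitly in the margin before committing to the general statement. No deeper machinery is needed; the lemma is a direct unwinding of definitions, and its role is simply to put $\cL^{(i,j)}$ into the form $\det(\,^{t}z^{(i)}\pa^{(j)})$ to which the Capelli identity and Cayley's formula can subsequently be applied when proving the contiguity relation itself.
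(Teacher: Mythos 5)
Your proposal is correct and follows essentially the same route as the paper: compute $z\exp(sE_{p,q}^{(i,j)})=z+s\,zE_{p,q}^{(i,j)}$, note that right multiplication by the matrix unit moves the $p$-th column of the $i$-th block into the $q$-th column slot of the $j$-th block, differentiate at $s=0$ via the chain rule to get (\ref{eq:cont-non-1}), and then read off (\ref{eq:cont-non-2}) as the column determinant of the formal product $\,^{t}z^{(i)}\pa^{(j)}$. Your added remarks on the nilpotency of $E_{p,q}^{(i,j)}$ for $i\neq j$ and on the commutativity of the entries (so the column and row determinants agree) match observations the paper makes in the proof and in the remark following the lemma.
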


\begin{proof}
For the definition (\ref{eq:cont-non-0}) of the operator $L_{p,q}^{(i,j)}$,
we consider the action of the $1$-parameter subgroup $s\mapsto\exp\left(sE_{p,q}^{(i,j)}\right)$
on $z$: 
\begin{align*}
z\,\exp(sE_{pq}^{(i,j)}) & =(z^{(1)},\dots,z^{(n)})(1+sE_{pq}^{(i,j)}+O(s^{2}))\\
 & =(z^{(1)},\dots,z^{(i)},\dots,z^{(j)}+sz^{(i)}E_{pq}^{(i,j)},\dots,z^{(n)})+O(s^{2}),
\end{align*}
where $E_{pq}^{(i,j)}$ in the second line is identified with a matrix
unit in $\mat(r)$ whose nonzero entry locates at $(p,q)$-th entry.
Since
\[
z^{(j)}+sz^{(i)}E_{pq}^{(i,j)}=(z_{1}^{(j)},\dots,z_{p}^{(j)},\dots,z_{q}^{(j)}+sz_{p}^{(i)},\dots,z_{r}^{(j)}).
\]
It follows that for a function $f$ of $z$ we have
\[
L_{p,q}^{(i,j)}f=\frac{d}{ds}f(z\,\exp sE_{p,q}^{(i,j)})|_{s=0}=\sum_{0\leq a<m}z_{a,p}^{(i)}\frac{\pa f}{\pa z_{a,q}^{(j)}}=(\tr z_{p}^{(i)}\pa_{q}^{(j)})f.
\]
Hence we have $(L_{p,q}^{(i,j)})_{1\leq p,q\leq r}=\,^{t}z^{(i)}\pa^{(j)}$
and $\cL^{(i,j)}=\det(L_{p,q}^{(i,j)})_{1\leq p,q\leq r}=\det(\,^{t}z^{(i)}\pa^{(j)})$. 
\end{proof}
\begin{rem}
In the case $i\neq j$, the operators $L_{p,q}^{(i,j)}\;(1\leq p,q\leq r)$
commute each other. Hence the determinant $\det(L_{p,q}^{(i,j)})_{1\leq p,q\leq r}$
is defined as column determinant or as row determinant. 
\end{rem}

According as the partition $\lm=(n_{1},\dots,n_{\ell})$ of $n$,
we write $z\in\mat'(m,N)$ as
\[
z=(z^{(1)},\dots,z^{(\ell)}),\;z^{(j)}=(z_{0}^{(j)},\dots,z_{n_{j}-1}^{(j)}),\;z_{k}^{(j)}\in\mat(m,r)
\]

The following theorem is one of our main result.
\begin{thm}
\label{thm:conti-nonconf}The contiguity relations for the Radon HGF
of non-confluent type are given by

\begin{equation}
\cL^{(i,j)}F(z,\al)=\bb(\al^{(j)})F(z,\al+\ep^{(i)}-\ep^{(j)}),\quad1\leq i\neq j\leq n,\label{eq:cont-nonconf-1}
\end{equation}
where $\cL^{(i,j)}=\det(\,^{t}z^{(i)}\pa^{(j)})$, which is a differential
operator of order $r$, and $\bb(s)=s(s+1)\cdots(s+r-1)$, which is
called the $b$-function. Moreover $\al\mapsto$ $\al+\ep^{(i)}-\ep^{(j)}$
for $\al=(\al^{(1)},\dots,\al^{(n)})$ implies the change $\al^{(i)}\mapsto\al^{(i)}+1,\al^{(j)}\mapsto\al^{(j)}-1$
and $\al^{(k)}\mapsto\al^{(k)}\;(k\neq i,j)$.
\end{thm}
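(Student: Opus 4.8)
The plan is to differentiate under the integral sign and reduce the resulting matrix computation to Cayley's formula. In the non-confluent case the integrand is $\chi_{\lm}(tz;\al)=\prod_{k=1}^{n}\bigl(\det(tz^{(k)})\bigr)^{\al^{(k)}}$, and by Lemma \ref{lem:Cont-2} the operator $\cL^{(i,j)}=\det(\,^{t}z^{(i)}\pa^{(j)})$ acts only on the $z$-variables; so the first step is to justify
\[
\cL^{(i,j)}F(z,\al)=\int_{C(z)}\cL^{(i,j)}\bigl[\chi_{\lm}(tz;\al)\bigr]\,\tau(t).
\]
Since $C(z)$ is a local section of the local system of homology groups over the parameter space of Section \ref{sec:Radon-HGF}, the class $C(z)$ is locally constant, and no boundary contributions appear under differentiation; this is the standard mechanism by which such period integrals satisfy holonomic systems. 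I would flag this as the one point that has to be invoked rather than reproved here.

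Next I would evaluate $\cL^{(i,j)}\chi_{\lm}(tz;\al)$ pointwise, with $t$ fixed. Because $i\neq j$, the matrices $z^{(i)}$ and $z^{(j)}$ are independent, each $L_{p,q}^{(i,j)}=\sum_{a}z_{a,p}^{(i)}\pa_{a,q}^{(j)}$ differentiates only in $z^{(j)}$, and among the factors of $\chi_{\lm}(tz;\al)$ only $(\det(tz^{(j)}))^{\al^{(j)}}$ depends on $z^{(j)}$. Writing $\chi_{\lm}(tz;\al)=A\cdot(\det(tz^{(j)}))^{\al^{(j)}}$ with $A=\prod_{k\neq j}(\det(tz^{(k)}))^{\al^{(k)}}$ annihilated by every $L_{p,q}^{(i,j)}$, the derivation property (used repeatedly) gives $\cL^{(i,j)}\chi_{\lm}(tz;\al)=A\cdot\cL^{(i,j)}\bigl[(\det(tz^{(j)}))^{\al^{(j)}}\bigr]$.

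The core is to compute $\cL^{(i,j)}\bigl[(\det(tz^{(j)}))^{\al^{(j)}}\bigr]$ by reducing to Cayley's formula. Put $M=tz^{(j)}\in\mat(r)$, $P=tz^{(i)}\in\mat(r)$, and $\pa_{M}=(\pa/\pa M_{b,q})$. The chain rule gives, for any polynomial $\Phi$ on $\mat(r)$,
\[
L_{p,q}^{(i,j)}\bigl[\Phi(tz^{(j)})\bigr]=\bigl[(\,^{t}P\,\pa_{M})_{p,q}\Phi\bigr](tz^{(j)}),\qquad(\,^{t}P\,\pa_{M})_{p,q}:=\sum_{b}P_{b,p}\,\frac{\pa}{\pa M_{b,q}},
\]
and since the coefficients $z_{a,p}^{(i)}$ are constant in $z^{(j)}$ while the $\pa_{a,q}^{(j)}$ commute, this intertwining is compatible with composition, hence passes through the column determinant: $\cL^{(i,j)}[\Phi(tz^{(j)})]=[\det(\,^{t}P\,\pa_{M})\Phi](tz^{(j)})$. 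Now $\,^{t}P$ has scalar entries and $\pa_{M}$ has pairwise commuting operator entries, so all entries of $\,^{t}P\,\pa_{M}$ commute and the column determinant factors: $\det(\,^{t}P\,\pa_{M})=\det(\,^{t}P)\det(\pa_{M})=\det(tz^{(i)})\,\det(\pa_{M})$. Taking $\Phi(M)=(\det M)^{\al^{(j)}}$ and invoking Cayley's formula $\det(\pa_{M})(\det M)^{\al^{(j)}}=\bb(\al^{(j)})(\det M)^{\al^{(j)}-1}$ gives
\[
\cL^{(i,j)}\bigl[(\det(tz^{(j)}))^{\al^{(j)}}\bigr]=\bb(\al^{(j)})\,\det(tz^{(i)})\,(\det(tz^{(j)}))^{\al^{(j)}-1}.
\]

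Finally, combining the last two displays, the factor $\det(tz^{(i)})$ raises the exponent of $\det(tz^{(i)})$ from $\al^{(i)}$ to $\al^{(i)}+1$ while the exponent of $\det(tz^{(j)})$ drops from $\al^{(j)}$ to $\al^{(j)}-1$, so $\cL^{(i,j)}\chi_{\lm}(tz;\al)=\bb(\al^{(j)})\,\chi_{\lm}(tz;\al+\ep^{(i)}-\ep^{(j)})$; the shift $\al\mapsto\al+\ep^{(i)}-\ep^{(j)}$ preserves Assumption \ref{assu:Radon-conf-4}(iii), so the right-hand side is again a Radon HGF. Integrating over $C(z)$ against $\tau(t)$ and using the first step yields $\cL^{(i,j)}F(z,\al)=\bb(\al^{(j)})F(z,\al+\ep^{(i)}-\ep^{(j)})$, as asserted. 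The genuine obstacle is the passage of the order-$r$ operator inside the integral with a $z$-dependent cycle; once one notices that $\cL^{(i,j)}$ pulls back along $z^{(j)}\mapsto tz^{(j)}$ to $\det(tz^{(i)})\,\det(\pa_{M})$, the algebraic content is an immediate consequence of Cayley's formula.
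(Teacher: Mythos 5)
Your proof is correct, and the core computation takes a genuinely different route from the paper's. The paper proves the key identity $\cL^{(i,j)}(\det tz^{(j)})^{s}=\bb(s)(\det tz^{(j)})^{s-1}\det(tz^{(i)})$ by first establishing a $\GL m$-covariance lemma ($A(tg,z)=A(t,gz)$, $B(tg,z)=B(t,gz)$), using it to reduce to the normal form $t=(1_{r},0)$, and then observing that in the resulting operator $\det(\pa_{1}^{(j)}+y\pa_{2}^{(j)})$ every term containing an entry of $\pa_{2}^{(j)}$ annihilates $(\det\tilde{z}^{(j)})^{s}$, leaving exactly Cayley's formula. You instead pull the operator back directly along the linear map $z^{(j)}\mapsto M=tz^{(j)}$ via the chain rule, obtaining $\det(\,^{t}(tz^{(i)})\,\pa_{M})$, and factor the scalar matrix out of a determinant with commuting entries to get $\det(tz^{(i)})\det(\pa_{M})$, after which Cayley's formula applies verbatim. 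Your route is shorter and avoids the normal-form reduction altogether; what it requires instead are the two observations you correctly supply: the intertwining $L_{p,q}^{(i,j)}[\Phi(tz^{(j)})]=[(\,^{t}P\pa_{M})_{p,q}\Phi](tz^{(j)})$ is compatible with composition because $P=tz^{(i)}$ is constant in $z^{(j)}$, and $\det(AB)=\det A\cdot\det B$ is legitimate here because all entries lie in a commutative ring. (The paper's covariance lemma is not wasted in the larger scheme --- it is the same equivariance underlying Proposition \ref{prop:covariance-1} --- but for this identity your argument is the more economical one.) Both treatments pass $\cL^{(i,j)}$ under the integral sign with no more justification than the local constancy of the cycle $C(z)$, so your flagging of that step matches the paper's own level of rigor.
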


The proof of this theorem is given in Section \ref{subsec:Proof-nonconf}.

\subsubsection{Confluent case of type $\protect\lm$}

Let us give the contiguity relations for the Radon HGF of type $\lm=(n_{1},\dots,n_{\ell})$
when $n_{1}\geq2$. Let $G=\GL N$ and $\hlam$ be the group given
in Section \ref{subsec:Char-conf-1}. Let $\gee=\gl(N)$ and $\ha_{\lm}$
be the Lie algebra of $G$ and $H_{\lm}$, respectively. Then $\ha_{\lm}=\fj_{r}(n_{1})\oplus\cdots\oplus\fj_{r}(n_{\ell})$
with $\fj_{r}(p)=\{X=\sum_{0\leq i<p}X_{i}\otimes\La^{i}\mid X_{i}\in\mat(r)\}$.
Moreover let $\fa_{\lm}=\{A\in\ha_{\lm}\mid[A,X]=0\;\text{for}\;\forall X\in\ha_{\lm}\}$
be the center of $\ha_{\lm}$. It is easily seen that the Lie algebra
$\fa_{\lm}$ is explicitly given by
\begin{equation}
\fa_{\lm}=\left\{ A=A^{(1)}\oplus\cdots\oplus A^{(\ell)}\mid A^{(j)}=\sum_{0\leq k<n_{j}}(a_{k}^{(j)})1_{r}\otimes\La^{k}\in\fj_{r}(n_{j}),a_{k}^{(j)}\in\C\right\} ,\label{eq:cont-confl-1}
\end{equation}
and $\ha_{\lm}$ is recovered as the centralizer of $\fa_{\lm}$ in
$\gee$. Consider the generalized root space decomposition of $\gee$
with respect to the commuting family of Lie algebra homomphisms $\{\ad\,A\mid A\in\fa_{\lm}\}$.
It is easily seen that this decomposition is given by
\begin{align*}
\gee & =\ha_{\lm}\oplus\bigoplus_{i\neq j}\tilde{\gee}_{\ep^{(i)}-\ep^{(j)}},\\
\tilde{\gee}_{\ep^{(i)}-\ep^{(j)}} & =\{X\in\gee\mid\left(\ad\;A-(\ep^{(i)}(A)-\ep^{(j)}(A)\right)^{k}X=0\quad\forall A\in\fa_{\lm},\exists k\}.
\end{align*}
where $\ep^{(i)}\in\fa_{\lm}^{*}$ is defined by $\ep^{(i)}(A)=a_{0}^{(i)}$
for $A$ given in (\ref{eq:cont-confl-1}). Let us describe root vectors
in the generalized root space decomposition. As in the non-confluent
case, we express $X\in\gee$ in blockwise as $X=\left(X^{(i,j)}\right)_{1\leq i,j\leq\ell}$
with $X^{(i,j)}\in\mat(n_{i}r,n_{j}r)$ and moreover the $(i,j)$-th
block $X^{(i,j)}$ is also written in the form of block matrix as
\[
X^{(i,j)}=\left(X^{(i,j;p,q)}\right)_{0\leq p<n_{i},0\leq q<n_{j}},\quad X^{(i,j;p,q)}\in\mat(r).
\]
Then the generalized root space is 
\[
\tilde{\gee}_{\ep^{(i)}-\ep^{(j)}}=\{X\in\gee\mid X^{(i',j')}=0\quad\text{for}\quad\forall(i',j')\neq(i,j)\}
\]
 and the root space $\gee_{\ep^{(i)}-\ep^{(j)}}$ with respect to
$\{\ad\,A\mid A\in\fa_{\lm}\}$, which is defined by
\[
\gee_{\ep^{(i)}-\ep^{(j)}}=\{X\in\gee\mid\left(\ad\;A-(\ep^{(i)}(A)-\ep^{(j)}(A)\right)X=0\quad\forall A\in\fa_{\lm}\}
\]
for the root $\ep^{(i)}-\ep^{(j)}$, consists of $X\in\tilde{\gee}_{e_{i}-e_{j}}$
whose only nonzero $r\times r$ block is at the top and right end
corner of the block matrix $X^{(i,j)}=\left(X^{(i,j;p,q)}\right)_{0\leq p<n_{i},0\leq q<n_{j}}$,
namely
\[
\gee_{\ep^{(i)}-\ep^{(j)}}=\{X\in\gee\mid X^{(i',j';p',q')}=0\quad\text{for}\quad\forall(i',j',p',q')\neq(i,j,0,n_{j}-1)\}.
\]

\begin{example*}
Consider the case $\lm=(2,2),\;N=4r$. The roots are $\pm(\ep^{(1)}-\ep^{(2)})$
and the corresponding root spaces are 
\[
\gee_{\ep^{(1)}-\ep^{(2)}}=\left\{ \left[\begin{array}{cccc}
. & . & . & \bullet\\
. & . & . & .\\
. & . & . & .\\
. & . & . & .
\end{array}\right]\right\} ,\quad\gee_{\ep^{(2)}-\ep^{(1)}}=\left\{ \left[\begin{array}{cccc}
. & . & . & .\\
. & . & . & .\\
. & \bullet & . & .\\
. & . & . & .
\end{array}\right]\right\} ,
\]
where each dot $\cdot$ represents a block of $r\times r$ zero matrix
and $\bullet$ denotes a block with an arbitrary $r\times r$ matrix. 
\end{example*}
We denote by $E_{a,b}^{(i,j)}\;(1\leq a,b\leq r)$ the element of
$\gee_{\ep^{(i)}-\ep^{(j)}}$ which is a matrix unit whose nonzero
entry locates at the $(a,b)$-th entry of the unique nonzero block
of $r\times r$ matrix. We see that $\{E_{a,b}^{(i,j)}\}_{1\leq a,b\leq r}$
forms a basis of $\gee_{\ep^{(i)}-\ep^{(j)}}$. As in the case of
non-confluent HGF, we define for $X\in\gee$ the differential operator
on $\mat'(m,N)$ by 
\[
(L_{X}f)(z):=\frac{d}{ds}f(z\,\exp sX)|_{s=0}.
\]
 In particular, the operator $L_{X}$ for the root vector $X=E_{a,b}^{(i,j)}$
will be denoted as $L_{a,b}^{(i,j)}$. Then the differential operator
$\cL^{(i,j)}:=\det\left(L_{a,b}^{(i,j)}\right)$ of order $r$ will
serve as a contiguity operator. Note that the differential operators
$L_{a,b}^{(i,j)},1\leq a,b\leq r$, are commuting operators, so the
determinant is considered as a column determinant as well as a row
determinant. These two determinants coincide. 

Let us give the explicit form of the operator $\cL^{(i,j)}$. According
as the partition $\lm=(n_{1},\dots,n_{\ell})$ of $n$, we write $z\in\mat'(m,N)$
as
\[
z=(z^{(1)},\dots,z^{(\ell)})\in\mat'(m,N),\quad z^{(j)}=(z_{0}^{(j)},\dots,z_{n_{j}-1}^{(j)}),\;z_{k}^{(j)}=(z_{k;a,b}^{(j)})_{0\leq a<m,1\leq b\leq r}\in\mat(m,r).
\]
Accordingly we put 
\[
\pa^{(j)}=(\pa_{0}^{(j)},\dots,\pa_{n_{j}-1}^{(j)}),\quad\pa_{k}^{(j)}=(\pa_{k;a,b}^{(j)})_{0\leq a<m,1\leq b\leq r},\quad\pa_{k;a,b}^{(j)}:=\frac{\pa}{\pa z_{k;a,b}^{(j)}}.
\]
 It should be careful that the way of indexation here is different
from those used in (\ref{eq:cont-non-0-1}) and (\ref{eq:cont-non-0-2}). 

In a similar way as Lemma \ref{lem:Cont-2}, we can easily show the
following. 
\begin{lem}
We have 
\begin{align}
L_{a,b}^{(i,j)} & =\sum_{0\leq c<m}z_{0;c,a}^{(i)}\pa_{n_{j}-1;c,b}^{(j)},\label{eq:cont-conf-0-1}\\
\cL^{(i,j)} & =\det(\,^{t}z_{0}^{(i)}\pa_{n_{j}-1}^{(j)}).\label{eq:cont-conf-0-2}
\end{align}
\end{lem}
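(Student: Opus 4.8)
The plan is to follow the same strategy as in the proof of Lemma \ref{lem:Cont-2}, tracing through the right action of the one-parameter subgroup $s\mapsto\exp(sE_{a,b}^{(i,j)})$ on $z\in\mat'(m,N)$ and then extracting the coefficient of $s$. The only genuinely new ingredient is bookkeeping about \emph{where} inside the block structure of $\GL N$ the root vector $E_{a,b}^{(i,j)}$ sits: by the description of $\gee_{\ep^{(i)}-\ep^{(j)}}$ given just above, $E_{a,b}^{(i,j)}$ is the matrix unit whose single nonzero entry is the $(a,b)$-th entry of the $(0,n_j-1)$-th $r\times r$ block of the $(i,j)$-th block of size $(n_ir)\times(n_jr)$. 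So multiplying $z$ on the right by $\exp(sE_{a,b}^{(i,j)})=1+sE_{a,b}^{(i,j)}+O(s^2)$ changes only the single block column $z_{n_j-1}^{(j)}$, and within it only the $b$-th column, which receives the summand $s\,z_{0;\cdot,a}^{(i)}$ (the $a$-th column of the block $z_0^{(i)}$).

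First I would write $z\,\exp(sE_{a,b}^{(i,j)})=(z^{(1)},\dots,z^{(\ell)})(1+sE_{a,b}^{(i,j)})+O(s^2)$ and observe that $z^{(j)}=(z_0^{(j)},\dots,z_{n_j-1}^{(j)})$ has only its last slot $z_{n_j-1}^{(j)}$ altered, becoming $(z_{n_j-1;1}^{(j)},\dots,z_{n_j-1;b}^{(j)}+s\,z_{0;a}^{(i)},\dots,z_{n_j-1;r}^{(j)})$, where $z_{0;a}^{(i)}$ denotes the $a$-th column of $z_0^{(i)}$ and $z_{n_j-1;b}^{(j)}$ the $b$-th column of $z_{n_j-1}^{(j)}$. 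Then, for any smooth function $f$ of $z$, the chain rule gives
\[
L_{a,b}^{(i,j)}f=\frac{d}{ds}f(z\,\exp sE_{a,b}^{(i,j)})|_{s=0}=\sum_{0\leq c<m}z_{0;c,a}^{(i)}\frac{\pa f}{\pa z_{n_j-1;c,b}^{(j)}}=\Bigl(\tr z_{0;a}^{(i)}\,\pa_{n_j-1;b}^{(j)}\Bigr)f,
\]
which is (\ref{eq:cont-conf-0-1}). Assembling these over $1\leq a,b\leq r$ yields $(L_{a,b}^{(i,j)})_{1\leq a,b\leq r}={}^{t}z_0^{(i)}\pa_{n_j-1}^{(j)}$ as an $r\times r$ matrix of commuting first-order operators, and taking its (column = row) determinant gives (\ref{eq:cont-conf-0-2}).

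There is no serious obstacle here; the proof is a routine adaptation of Lemma \ref{lem:Cont-2}. The one point that requires a little care — and the only reason the indexing warning in the text is relevant — is keeping straight that the relevant "source" block is $z_0^{(i)}$ (the leading block of $z^{(i)}$, because $\ep^{(i)}(A)=a_0^{(i)}$ picks out the $0$-th block) while the "target" block is $z_{n_j-1}^{(j)}$ (the last block of $z^{(j)}$, because the root vector's nonzero entry sits in the top-right $r\times r$ corner, i.e. block position $(0,n_j-1)$, of $X^{(i,j)}$). Once that identification is made, the computation is identical to the non-confluent case, so I would state the lemma as an immediate consequence and omit further detail, exactly as the phrase "In a similar way as Lemma \ref{lem:Cont-2}" promises.
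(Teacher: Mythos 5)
Your proof is correct and is exactly the adaptation of Lemma \ref{lem:Cont-2} that the paper has in mind when it says the result follows ``in a similar way'' (the paper itself omits the details). You correctly locate the nonzero entry of $E_{a,b}^{(i,j)}$ in the $(0,n_j-1)$ sub-block of the $(i,j)$-th block, so that right multiplication by $\exp(sE_{a,b}^{(i,j)})$ perturbs only the $b$-th column of $z_{n_j-1}^{(j)}$ by $s\,z_{0;\cdot,a}^{(i)}$, and the chain rule then gives (\ref{eq:cont-conf-0-1}) and hence (\ref{eq:cont-conf-0-2}).
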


\begin{thm}
\label{thm:cont-conf} For the Radon HGF of type $\lm=(n_{1},\dots,n_{\ell})$,
we have the contiguity relations as follows.

(1) When $n_{j}=1$, we have 
\[
\cL^{(i,j)}F(z,\al)=\al_{0}^{(j)}(\al_{0}^{(j)}+1)\cdots(\al_{0}^{(j)}+r-1)\cdot F(z,\al+\ep^{(i)}-\ep^{(j)}).
\]

(2) When $n_{j}\geq2$, we have 
\begin{equation}
\cL^{(i,j)}F(z,\al)=(\al_{n_{j}-1}^{(j)})^{r}F(z,\al+\ep^{(i)}-\ep^{(j)}).\label{eq:cont-conf-2}
\end{equation}
Here $\cL^{(i,j)}=\det(\,^{t}z_{0}^{(i)}\pa_{n_{j}-1}^{(j)})$ and
$\al\mapsto$ $\al+\ep^{(i)}-\ep^{(j)}$ implies the change $\al^{(i)}\mapsto(\al_{0}^{(i)}+1,\al_{1}^{(i)},\dots,\al_{n_{i}-1}^{(i)})$
and $\al^{(j)}\mapsto(\al_{0}^{(j)}-1,\al_{1}^{(j)},\dots,\al_{n_{j}-1}^{(j)})$
with other $\al^{(k)}$ unchanged. 
\end{thm}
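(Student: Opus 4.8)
The plan is to reduce both cases to the Cayley formula (\ref{eq:capelli-1-1}) applied under the integral sign, exploiting the covariance Proposition \ref{prop:covariance-1}(2). The starting observation is that the operator $L_X$ for $X\in\gee_{\ep^{(i)}-\ep^{(j)}}$ acts on the Radon HGF by differentiating the character: since $F_\lm(z,\al;C)=\int_{C(z)}\chi_\lm(tz;\al)\,\tau(t)$ and $L_X$ only moves $z$ by a one-parameter subgroup $\exp(sX)$ lying in the complexification of $H_\lm$ (more precisely, $z\mapsto z\exp(sX)$ sends $tz\mapsto tz\exp(sX)$ and $\exp(sX)$ normalizes the Jordan structure so that $(tz)\exp(sX)$ is again in the relevant Jordan group for $z$ in a suitable open set), we get $L_X F_\lm(z,\al)=\int_{C(z)}\frac{d}{ds}\chi_\lm\bigl((tz)\exp sX;\al\bigr)\big|_{s=0}\,\tau(t)$. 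So everything comes down to a local computation on the character $\chi_\lm$, and by the product form in Proposition \ref{prop:char-conf} only the $i$-th and $j$-th Jordan factors are affected; in fact, because $X^{(i,j)}$ is the only nonzero block and it feeds the $i$-th block columns into the $j$-th block columns, only the factor $\chi_{n_j}(h^{(j)};\al^{(j)})$ is differentiated while the $i$-th factor supplies the ``source'' data.

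Next I would carry out the case $n_j=1$ first, which is essentially the non-confluent argument localized at index $j$. Here the $j$-th factor of the character is $\bigl(\det(tz^{(j)})\bigr)^{\al_0^{(j)}}$ in the coordinate $w^{(j)}:=tz^{(j)}\in\mat(r)$, and $L_{a,b}^{(i,j)}=\sum_c z_{0;c,a}^{(i)}\pa_{n_j-1;c,b}^{(j)}$ by (\ref{eq:cont-conf-0-1}); writing $t z^{(i)}$ in terms of the $t$-variables one checks that $\cL^{(i,j)}=\det(\,^tz_0^{(i)}\pa_{n_j-1}^{(j)})$ acts on the integrand exactly as $\det(\pa/\pa w^{(j)}_{a,b})$ composed with the linear change carrying $w^{(j)}\mapsto w^{(j)}+s\,w^{(i)}E_{a,b}$. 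A clean way to see this: after the substitution one recognizes the operator $\cL^{(i,j)}$ applied to $f^{s}=(\det w^{(j)})^{\al_0^{(j)}}$ as literally $\det(\pa_{a,b})f^{s}$ in the entries of the auxiliary matrix obtained by the Capelli/Cayley setup, and Cayley's formula gives the factor $\al_0^{(j)}(\al_0^{(j)}+1)\cdots(\al_0^{(j)}+r-1)=\bb(\al_0^{(j)})$, while $f^{s-1}$ together with the leftover $\det(tz^{(i)})$ reassembles the character with $\al^{(i)}$ bumped up and $\al^{(j)}$ bumped down, i.e.\ $\chi_\lm(tz;\al+\ep^{(i)}-\ep^{(j)})$. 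Integrating against $\tau(t)$ over $C(z)$ (and noting that the family of supports and the local system are compatible with the parameter shift, since $\al_0^{(j)}\notin\Z$ keeps us in the generic regime and the shift by $1$ does not alter the relevant homology local system on a Zariski open $V$) yields the stated identity.

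For the case $n_j\ge 2$, the essential new point is that the differentiated slot is now $z_{n_j-1}^{(j)}$, the \emph{last} block, which in the $\log$-expansion (\ref{eq:char-conf-0}) enters the exponent only through $\Tr\,\te_{n_j-1}$, and in $\te_{n_j-1}(\underline{tz}^{(j)})$ the variable $h_{n_j-1}=$ (the normalized $z_{n_j-1}^{(j)}$ data) appears \emph{linearly}, with coefficient $1_r$, by the weight bookkeeping described after (\ref{eq:char-conf-0}). Hence $\exp\bigl(\al_{n_j-1}^{(j)}\Tr\,\te_{n_j-1}(\cdot)\bigr)$ depends on the entries of that block through $\exp(\al_{n_j-1}^{(j)}\,\text{(linear form)})$, so each $L_{a,b}^{(i,j)}$ produces a factor $\al_{n_j-1}^{(j)}$ times the character itself evaluated at the shifted parameter (the shift $\al^{(j)}_0\mapsto\al^{(j)}_0-1$, $\al^{(i)}_0\mapsto\al^{(i)}_0+1$ arising again from the $(\det(tz_0))$-bookkeeping exactly as in the non-confluent case — note $\ep^{(i)}$ only involves the $0$-th component $a_0^{(i)}$). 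Since the $r^2$ operators $L_{a,b}^{(i,j)}$ commute and each contributes one factor $\al_{n_j-1}^{(j)}$, the column determinant $\cL^{(i,j)}=\det(L_{a,b}^{(i,j)})$ contributes $(\al_{n_j-1}^{(j)})^{r}$ — here one must verify that the ``cross terms'' from the non-diagonal products in the determinant expansion vanish, which follows because differentiating the same linear exponential repeatedly in two \emph{distinct} matrix-entry directions that share a row or column produces, after the antisymmetrization in $\Si_r$, the same cancellation of repeated columns that killed the off-diagonal Capelli terms in the proof of Cayley's formula. Integrating then gives (\ref{eq:cont-conf-2}).

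The main obstacle I anticipate is the bookkeeping in the confluent case: precisely tracking how $\cL^{(i,j)}=\det(\,^tz_0^{(i)}\pa_{n_j-1}^{(j)})$ acts after passing to the affine coordinates $u$ and the normalized Jordan variable $\underline{\vec u z}^{(j)}$, verifying that $h_{n_j-1}$ really enters $\te_{n_j-1}$ linearly with coefficient $1_r$ (so that higher $\te_k$, $k<n_j-1$, are untouched and $\te_{n_j-1}$ contributes nothing quadratic in the differentiated block), and confirming that the off-diagonal terms in the $r\times r$ determinant genuinely cancel rather than merely being lower order. Establishing the linearity claim cleanly — e.g.\ by the weight argument: $\te_{n_j-1}$ collects weight-$(n_j-1)$ monomials in $h_1,\dots,h_{n_j-1}$, and the only such monomial involving $h_{n_j-1}$ is $h_{n_j-1}$ itself — is the crux, and once it is in place the Cayley-type computation is routine. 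One should also be slightly careful that the parameter shift by a unit vector keeps Assumption \ref{assu:Radon-conf-4} generically satisfied and that the cycle $C=\{C(z)\}$ transports correctly, but this is handled exactly as in the non-confluent proof of Theorem \ref{thm:conti-nonconf} in Section \ref{subsec:Proof-nonconf}.
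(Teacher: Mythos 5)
Your overall strategy --- reduce everything to the action of $\cL^{(i,j)}$ on the character under the integral sign, use Cayley's formula for $n_{j}=1$, and exploit the linearity of $\te_{n_{j}-1}$ in $h_{n_{j}-1}$ (your weight argument for this is correct) for $n_{j}\geq2$ --- is exactly the paper's strategy, and your case (1) is fine. The gap is in the determinant computation for case (2). You claim that each single operator $L_{a,b}^{(i,j)}$ sends the character to $\al_{n_{j}-1}^{(j)}$ times the \emph{shifted} character, and that the off-diagonal terms ($\sm\neq\mathrm{id}$) in the expansion of $\det(L_{a,b}^{(i,j)})$ vanish ``by the same cancellation as in Cayley's formula''. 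Both claims are false, and the second, if true, would destroy the theorem. A single first-order operator cannot produce the shift $\al_{0}^{(i)}\mapsto\al_{0}^{(i)}+1$, $\al_{0}^{(j)}\mapsto\al_{0}^{(j)}-1$, which requires multiplying the character by the degree-$r$ expression $\det(tz_{0}^{(i)})\det(tz_{0}^{(j)})^{-1}$. What actually happens (writing $X=(tz_{0}^{(j)})^{-1}$) is
\[
L_{\sm(k),k}^{(i,j)}\,\Tr\bigl(X\,(tz_{n_{j}-1}^{(j)})\bigr)=\bigl(X\,(tz_{0}^{(i)})\bigr)_{k,\sm(k)},
\]
so every one of the $r!$ terms of the column determinant survives, contributing $\sgn(\sm)\,(\al_{n_{j}-1}^{(j)})^{r}\prod_{k}\bigl(X\,tz_{0}^{(i)}\bigr)_{k,\sm(k)}$ times the exponential; their signed sum is $(\al_{n_{j}-1}^{(j)})^{r}\det(X\,tz_{0}^{(i)})=(\al_{n_{j}-1}^{(j)})^{r}\det(tz_{0}^{(i)})\det(tz_{0}^{(j)})^{-1}$, and it is precisely this emergent determinant --- built from \emph{all} permutations, not just $\sm=\mathrm{id}$ --- that converts $\chi_{\lm}(tz;\al)$ into $\chi_{\lm}(tz;\al+\ep^{(i)}-\ep^{(j)})$. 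The Capelli-type cancellation you invoke has no analogue here: in the Cayley proof the terms with $\sm\neq\mathrm{id}$ die because $E'_{i,j}f^{s}=0$ for $i\neq j$, whereas here $L_{a,b}^{(i,j)}$ applied to the exponential is nonzero for every $(a,b)$. (The only things that do vanish are the genuine second-derivative cross terms, since the produced factors $(X\,tz_{0}^{(i)})_{k,\sm(k)}$ no longer depend on $z_{n_{j}-1}^{(j)}$.) Once this step is corrected, the rest of your outline goes through and coincides with the proof in Section \ref{subsec:Proof-conf}.
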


\subsection{\label{subsec:Proof-nonconf}Proof of Theorem \ref{thm:conti-nonconf}}

Since the HGF has the form
\[
F(z,\al;C)=\int_{C(z)}\chi(tz,\al)\cdot\tau=\int_{C(z)}\prod_{k=1}^{n}(\det tz^{(k)})^{\al_{k}}\cdot\tau
\]
and the differential operator $\cL^{(i,j)}=\det(L_{p,q}^{(i,j)})_{1\leq p,q\leq r}=\det(\,^{t}z^{(i)}\pa^{(j)})$
in (\ref{eq:cont-nonconf-1}) contains only the derivations $\pa_{a,b}^{(j)}=\pa/\pa z_{a,b}^{(j)}$
having the index $j$, we have
\[
\cL^{(i,j)}F(z,\al;C)=\int_{C(z)}\prod_{k\neq j}(\det tz^{(k)})^{\al_{k}}\cdot\cL^{(i,j)}(\det tz^{(j)})^{\al_{j}}\cdot\tau.
\]
 So we compute $\cL^{(i,j)}(\det tz^{(j)})^{\al_{j}}$. To avoid the
unnecessary cumbersome notation, we put $s:=\al_{j}$. It is enough
to show
\begin{equation}
\cL^{(i,j)}(\det tz^{(j)})^{s}=\bb(s)(\det tz^{(j)})^{s-1}(\det tz^{(i)}),\label{eq:cont-nonconf-2}
\end{equation}
where $\bb(s)=s(s+1)\cdots(s+r-1)$. We write the left and the right
hand side of (\ref{eq:cont-nonconf-2}) as $A(t,z)$ and $B(t,z)$,
respectively.
\begin{lem}
\label{lem:cont-key}For any $g\in\GL m$, there holds
\[
A(tg,z)=A(t,gz),\quad B(tg,z)=B(t,gz).
\]
 
\end{lem}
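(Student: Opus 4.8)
The plan is to exploit the $\GL m$-covariance of each ingredient of $A(t,z)$ and $B(t,z)$ separately, so that the desired equality reduces to a check of how the basic building blocks transform. The operator $\cL^{(i,j)}=\det(\,^{t}z^{(i)}\pa^{(j)})$ acts only in the $z^{(j)}$-variables, and $z^{(j)}\in\mat'(m,r)$; the point is that left multiplication $z\mapsto gz$ on the $m$-index and the substitution $t\mapsto tg$ on the homogeneous coordinates of $T$ interact in a controlled way. Concretely, I would first record that $t(gz^{(k)})=(tg)z^{(k)}$ for every block $k$, so $\det(t(gz^{(k)}))=\det((tg)z^{(k)})$; this immediately handles the factors $(\det tz^{(k)})^{\al_k}$ appearing in both $A$ and $B$, and in particular gives the $B$-side assertion $B(tg,z)=B(t,gz)$ once we also note that the extra factor $\det tz^{(i)}$ on the right of (\ref{eq:cont-nonconf-2}) transforms the same way.

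For the $A$-side the extra work is to see that the differential operator $\cL^{(i,j)}$ is equivariant under $z^{(j)}\mapsto gz^{(j)}$ in the appropriate sense. Write $w:=gz^{(j)}$, so $w_{a,q}=\sum_{b}g_{a,b}z_{b,q}^{(j)}$ and hence $\pa/\pa z_{a,q}^{(j)}=\sum_{c}g_{c,a}\,\pa/\pa w_{c,q}$, i.e. $\pa^{(j)}_{z}=\,^{t}g\,\pa^{(j)}_{w}$ as a column of vector fields. Likewise, under $t\mapsto tg$ the row vectors $\,^{t}z^{(i)}$ get paired against $t$ through $tg$, but since $z^{(i)}$ is untouched by the operator we only need the chain-rule identity just stated. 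Substituting, $\,^{t}z^{(i)}\pa^{(j)}_{z}$ evaluated at argument $z$ with $t$ replaced by $tg$ matches $\,^{t}z^{(i)}\pa^{(j)}_{w}$ evaluated at $w=gz^{(j)}$ up to the factor $^{t}g$, and taking the column determinant of the $r\times r$ matrix of commuting operators turns the $^{t}g$ into a scalar $\det g$. Since the factor $(\det tg\cdot z^{(j)})^{s}=(\det t\,w)^{s}$ carries the matching power, and since $A(t,z)$ has the explicit shape $\det g^{\,?}$-homogeneous in a way that cancels, the two evaluations agree; more cleanly, one observes that both $A$ and $B$ are, by Lemma~\ref{lem:Cont-2} and Cayley's formula applied fibrewise, built only from $\det tz^{(j)}$, $\det tz^{(i)}$ and the operators $L^{(i,j)}_{p,q}=\,^{t}z^{(i)}\pa^{(j)}\!$, each of which satisfies the transformation rule $\varphi(tg,z)=\varphi(t,gz)$ individually.

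The cleanest route, and the one I would actually write, is therefore: (i) prove the single clean identity $L^{(i,j)}_{p,q}\bigl(\Psi(tg,z)\bigr)=\bigl(L^{(i,j)}_{p,q}\Psi\bigr)(tg,z)=\bigl(L^{(i,j)}_{p,q}\Psi\bigr)(t,gz)$ for any smooth $\Psi$ — equivalently, that the vector fields $L^{(i,j)}_{p,q}$ commute with the pullback along $z\mapsto gz$, which is immediate from $L^{(i,j)}_{p,q}$ being the infinitesimal generator of the right $\GL N$-action, an action that commutes with the left $\GL m$-action; (ii) note $\det(tg\cdot z^{(k)})=\det(t\cdot gz^{(k)})$ for each $k$; (iii) assemble: $A(tg,z)=\cL^{(i,j)}\bigl[(\det tg\cdot z^{(j)})^{s}\bigr]=\cL^{(i,j)}\bigl[(\det t\cdot gz^{(j)})^{s}\bigr]=A(t,gz)$ by (i), and similarly for $B$ by (ii). The only mild subtlety — and the step where I expect to have to be careful — is keeping the bookkeeping straight when $t$ is a \emph{rectangular} $r\times m$ matrix and $g\in\GL m$ acts on the right of $t$ while on the left of $z$: one must verify that the identity $\pa_{z}=\,^{t}g\,\pa_{w}$ is exactly compensated, and that the column-determinant $\cL^{(i,j)}$ behaves well under this (it does, because for $i\neq j$ the $L^{(i,j)}_{p,q}$ commute, so $\det(\,^{t}z^{(i)}\,{}^{t}g\,\pa_{w}^{(j)})=\det(\,^{t}(gz^{(i)})\,\pa_{w}^{(j)})$ with no ordering ambiguity). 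Everything else is the routine chain rule.
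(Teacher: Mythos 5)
Your proof is correct and follows essentially the same route as the paper: the $B$-identity is just associativity of matrix multiplication, and the $A$-identity comes from the invariance of $\cL^{(i,j)}$ under the substitution $z\mapsto w=gz$, because $\pa_{z}^{(j)}={}^{t}g\,\pa_{w}^{(j)}$ exactly compensates ${}^{t}(gz^{(i)})={}^{t}z^{(i)}\,{}^{t}g$ --- equivalently, the right-action vector fields $L_{p,q}^{(i,j)}$ commute with the left $\GL m$-action. (The aside in your second paragraph about ${}^{t}g$ producing a scalar factor $\det g$ is spurious --- no such factor appears, as your own final computation $\det({}^{t}z^{(i)}\,{}^{t}g\,\pa_{w}^{(j)})=\det({}^{t}(gz^{(i)})\pa_{w}^{(j)})$ shows --- but the ``cleanest route'' you settle on is exactly the paper's argument.)
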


\begin{proof}
The second identity can be shown as 
\begin{align*}
B(tg,z) & =b(s)(\det(tg)z^{(j)})^{s-1}\cdot(\det(tg)z^{(i)})\\
 & =b(s)(\det t(gz^{(j)}))^{s-1}\cdot(\det t(gz^{(i)}))\\
 & =B(t,gz).
\end{align*}
To show the first identity, note that 
\begin{equation}
A(tg,z)=\det(\,^{t}z^{(i)}\pa^{(j)})\left(\det(tg)z^{(j)}\right)^{s}=\det(\,^{t}z^{(i)}\pa^{(j)})\left(\det t(gz^{(j)})\right)^{s}.\label{eq:cont-nonconf-3}
\end{equation}
 For $A(t,gz)$, we make a change of variable $z\to w$ defined by
$w=gz$. In particular $w^{(i)}=gz^{(i)},w^{(j)}=gz^{(j)}$. Then
to check the first identity it is sufficient to show 
\begin{equation}
\det(\,^{t}z^{(i)}\pa_{z}^{(j)})=\det(\,^{t}w^{(i)}\pa_{w}^{(j)}).\label{eq:cont-nonconf-4}
\end{equation}
 In fact, from (\ref{eq:cont-nonconf-3}) we have 
\[
A(tg,z)=\det(\,^{t}w^{(i)}\pa_{w}^{(j)})\left(\det tw^{(j)}\right)^{s}=A(t,w)=A(t,gz).
\]
 So, we show (\ref{eq:cont-nonconf-4}). Note that if one perform
a transformation $z\mapsto w:=gz,$ in particular $z^{(i)}\mapsto w^{(i)}=gz^{(i)}$
and $z^{(j)}\mapsto w^{(j)}=gz^{(j)}$, the derivation $\pa_{z}^{(j)}$
transforms to $\pa_{w}^{(j)}$ contravariantly to $z^{(j)}\mapsto w^{(j)}$,
namely, $\pa_{w}^{(j)}=\tr g^{-1}\pa_{z}^{(j)}$. It follows that
\[
\det(\,^{t}w^{(i)}\pa_{w}^{(j)})=\det\left(\tr(gz^{(i)})\tr g^{-1}\pa_{z}^{(j)}\right)=\det(\,^{t}z^{(i)}\pa_{z}^{(j)}).
\]
This shows (\ref{eq:cont-nonconf-4}).
\end{proof}
By the help of Lemma \ref{lem:cont-key}, we can reduce the proof
of (\ref{eq:cont-nonconf-2}) to that for a particularly chosen $t\in\mat'(r,m)$.
In fact, for $t\in\mat'(r,m)$, take $g\in\GL m$ and define $u\in\mat'(r,m)$
by $t=ug$, moreover put $z'=gz$, then Lemma \ref{lem:cont-key}
says that 
\[
A(u,z')=A(ug,g^{-1}z')=A(t,z),\;B(u,z')=B(ug,g^{-1}z')=B(t,z).
\]
So if one can show $A(u,z)=B(u,z)$ for a particularly chosen $u$
and for any $z\in\mat'(m,N)$, we are done. We can take $g$ so that
$u=tg^{-1}=(1_{r},0)$. Then it is sufficient to show the identity
(\ref{eq:cont-nonconf-2}) assuming $t=(1_{r},0)$. Then 
\[
tz^{(i)}=(1_{r},0)\left(\begin{array}{c}
{z_{0}^{(i)}}'\\
\vdots\\
{z_{r-1}^{(i)}}'\\
\\
{z_{m-1}^{(i)}}'
\end{array}\right)=\left(\begin{array}{c}
{z_{0}^{(i)}}'\\
\vdots\\
{z_{r-1}^{(i)}}'
\end{array}\right)=:\tilde{z}^{(i)}\in\mat(r),
\]
where ${z_{k}^{(i)}}'$ is the $k$-th row vector of the matrix $z^{(i)}\in\mat(m,r)$.
Similarly we have $tz^{(j)}=:\tilde{z}^{(j)}\in\mat(r)$ and the identity
(\ref{eq:cont-nonconf-2}) reduces to 
\begin{equation}
(\det\tilde{z}^{(i)})^{-1}\det(\,^{t}z^{(i)}\pa^{(j)})(\det\tilde{z}^{(j)})^{s}=b(s)(\det\tilde{z}^{(j)})^{s-1}.\label{eq:cont-non-5}
\end{equation}
To show this identity, we computes its left hand side. Note that 
\[
(\,^{t}\tilde{z}^{(i)})^{-1}\,^{t}z^{(i)}=\,^{t}(z^{(i)}\tilde{z}^{(i)-1})=(1_{r},y),\quad\text{for some }y\in\mat(r,m-r).
\]
Then the left hand side of (\ref{eq:cont-non-5}) is written as
\begin{align*}
\det((\,^{t}\tilde{z}^{(i)})^{-1}\,^{t}z^{(i)}\pa^{(j)})(\det\tilde{z}^{(j)})^{s} & =\det\left((1_{r},y)\left(\begin{array}{c}
\pa_{1}^{(j)}\\
\pa_{2}^{(j)}
\end{array}\right)\right)(\det\tilde{z}^{(j)})^{s}\\
 & =\det(\pa_{1}^{(j)}+y\pa_{2}^{(j)})(\det\tilde{z}^{(j)})^{s}.
\end{align*}
Here $\pa_{1}^{(j)}$ is the matrix obtained by choosing the first
$r$ rows from $\pa^{(j)}$ and $\pa_{2}^{(j)}$ is the matrix obtained
by taking the remained rows from $\pa^{(j)}$. Namely 
\begin{align*}
\tilde{z}^{(j)} & =\left(\begin{array}{ccc}
z_{01}^{(j)} & \dots & z_{0,r}^{(j)}\\
\vdots &  & \vdots\\
z_{r-1,1}^{(j)} & \dots & z_{r-1,r}^{(j)}
\end{array}\right),\\
\pa_{1}^{(j)} & =\left(\begin{array}{ccc}
\pa_{01}^{(j)} & \dots & \pa_{0,r}^{(j)}\\
\vdots &  & \vdots\\
\pa_{r-1,1}^{(j)} & \dots & \pa_{r-1,r}^{(j)}
\end{array}\right),\\
\pa_{2}^{(j)} & =\left(\begin{array}{ccc}
\pa_{r,1}^{(j)} & \dots & \pa_{r,r}^{(j)}\\
\vdots &  & \vdots\\
\pa_{m-1,1}^{(j)} & \dots & \pa_{m-1,r}^{(j)}
\end{array}\right).
\end{align*}
Note that $y$ depends only on the entries of $z^{(i)}$ and is annihilated
by any $\pa_{a,b}^{(j)}$ and that the terms in the operator $\det(\pa_{1}^{(j)}+y\pa_{2}^{(j)})$
containing some entry of $\pa_{2}^{(j)}$ annihilate $(\det\tilde{z}^{(j)})^{s}$.
Thus the identity to be shown is reduced to 
\[
\det(\pa_{1}^{(j)})(\det\tilde{z}^{(j)})^{s}=\bb(s)(\det\tilde{z}^{(j)})^{s-1}.
\]
 But this is nothing but Cayley's formula, and the proof of the theorem
is completed.

\subsection{\label{subsec:Proof-conf}Proof of Theorem \ref{thm:cont-conf}}

The proof of the assertion (1) of Theorem \ref{thm:cont-conf} is
carry out in a similar way as that for Theorem \ref{thm:conti-nonconf},
so we show the assertion (2). We apply the operator $\cL^{(i,j)}$
to 
\[
F(z,\al)=F(z,\al;C):=\int_{C(z)}\chi_{\lm}(tz,\al)\cdot\tau.
\]
 Recall that $n_{j}\geq2$, $z=(z^{(1)},\dots,z^{(\ell)})\in Z_{\lm}$,
$z^{(j)}=(z_{0}^{(j)},\dots,z_{n_{j}-1}^{(j)}),z_{k}^{(j)}\in\mat(m,r)$
and 
\begin{equation}
L_{a,b}^{(i,j)}=\sum_{0\leq c<m}(z_{0}^{(i)})_{ca}\frac{\pa}{\pa(z_{n_{j}-1}^{(j)})_{cb}}=\left(\tr z_{0}^{(i)}\pa_{n_{j}-1}^{(j)}\right)_{a,b}.\label{eq:proof-conf-1}
\end{equation}
The integrand has the form
\begin{align}
\chi_{\lm}(tz;\al) & =\prod_{k=1}^{\ell}\chi_{n_{k}}(tz^{(k)};\al^{(k)}),\nonumber \\
\chi_{n_{k}}(tz^{(k)};\al^{(k)}) & =(\det(tz_{0}^{(k)})^{\al_{0}^{(k)}}\prod_{1\leq p<n_{k}}\exp\left(\al_{p}^{(k)}\Tr\left(\te_{p}((tz_{0}^{(k)})^{-1}(tz^{(k)}))\right)\right).\label{eq:proof-conf-2}
\end{align}
Note that the operator $\cL^{(i,j)}=\det(\tr z_{0}^{(i)}\pa_{n_{j}-1}^{(j)})$
contains only the differentiations with respect the entries of the
matrix $z_{n_{j}-1}^{(j)}$. It follows that 
\[
\cL^{(i,j)}\cdot F(z,\al)=\int_{C(z)}\prod_{k\neq j}\chi_{n_{k}}(tz^{(k)};\al^{(k)})\cdot\left(\cL^{(i,j)}\cdot\chi_{n_{j}}(tz^{(j)};\al^{(j)})\right)\cdot\tau.
\]
From (\ref{eq:proof-conf-2}), the terms relating $z_{n_{j}-1}^{(j)}$
in $\chi_{n_{j}}(tz^{(j)};\al^{(j)})$ is $\te_{n_{j}-1}((tz_{0}^{(j)})^{-1}(tz^{(j)}))$.
Moreover, from the definition of $\te_{n_{j}-1}$ given in (\ref{eq:char-conf-0}),
we see that 
\[
\te_{n_{j}-1}((tz_{0}^{(j)})^{-1}(tz^{(j)}))=(tz_{0}^{(j)})^{-1}(tz_{n_{j}-1}^{(j)})+\left(\text{terms containing }tz_{k}^{(j)}\;(0\leq k<n_{j}-1)\right).
\]
 Then 
\begin{align*}
\cL^{(i,j)}\cdot\chi_{n_{j}}(tz^{(j)};\al^{(j)}) & =(\det(tz_{0}^{(j)})^{\al_{0}^{(j)}}\prod_{1\leq p<n_{j}-1}\exp\left(\al_{p}^{(j)}\Tr\left(\te_{p}((tz_{0}^{(j)})^{-1}(tz^{(j)}))\right)\right)\\
 & \qquad\times\cL^{(i,j)}\cdot\exp\left(\al_{n_{j}-1}^{(j)}\Tr\left(\te_{n_{j}-1}((tz_{0}^{(j)})^{-1}(tz^{(j)}))\right)\right).
\end{align*}
 So we compute $\cL^{(i,j)}\cdot\exp\left(\al_{n_{j}-1}^{(j)}\Tr\left((tz_{0}^{(j)})^{-1}(tz_{n_{j}-1}^{(j)})\right)\right)$.
Write $\cL^{(i,j)}$ as
\[
\cL^{(i,j)}=\det\left(L_{a,b}^{(i,j)}\right)_{1\leq a,b\leq r}=\sum_{\sm\in\Si_{r}}\sgn(\sm)L_{\sm(1),1}^{(i,j)}L_{\sm(2),2}^{(i,j)}\cdots L_{\sm(r),r}^{(i,j)},
\]
and put $X=(tz_{0}^{(j)})^{-1}$. Taking into account the form of
the operator $L_{a,b}^{(i,j)}$ given in (\ref{eq:proof-conf-1})
and $n_{j}\geq2$, we note that $L_{\sm(k),k}^{(i,j)}$ annihilates
$X$. Then 
\[
L_{\sm(k),k}^{(i,j)}\exp\left(\al_{n_{j}-1}^{(j)}\Tr\left(X(tz_{n_{j}-1}^{(j)})\right)\right)=\al_{n_{j}-1}^{(j)}\exp\left(\al_{n_{j}-1}^{(j)}\Tr\left(X(tz_{n_{j}-1}^{(j)})\right)\right)L_{\sm(k),k}^{(i,j)}\Tr\left(X(tz_{n_{j}-1}^{(j)})\right),
\]
and the last factor in the right hand side is 
\begin{align*}
L_{\sm(k),k}^{(i,j)}\Tr\left(X(tz_{n_{j}-1}^{(j)})\right) & =\Tr\left(X\cdot L_{\sm(k),k}^{(i,j)}(tz_{n_{j}-1}^{(j)})\right)\\
 & =\Tr\left(X\cdot(0,\dots,0,(tz_{0}^{(i)})_{\sm(k)},0,\dots,0)\right)\\
 & =\left(X(tz_{0}^{(i)})\right)_{k,\sm(k)},
\end{align*}
where $(0,\dots,0,(tz_{0}^{(i)})_{\sm(k)},0,\dots,0)\in\mat(r)$ represents
the matrix whose $k$-th column vector is the $\sm(k)$-th column
vector of $tz_{0}^{(i)}\in\mat(r)$ and the other columns are $0$.
Then
\begin{align*}
 & \cL^{(i,j)}\cdot\exp\left(\al_{n_{j}-1}^{(j)}\Tr\left(X(tz_{n_{j}-1}^{(j)})\right)\right)\\
 & =\left(\al_{n_{j}-1}^{(j)}\right)^{r}\exp\left(\al_{n_{j}-1}^{(j)}\Tr\left(X(tz_{n_{j}-1}^{(j)})\right)\right)\sum_{\sm\in\Si_{r}}\sgn(\sm)\prod_{1\leq k\leq r}\left(X(tz_{0}^{(i)})\right)_{k,\sm(k)}\\
 & =\left(\al_{n_{j}-1}^{(j)}\right)^{r}\exp\left(\al_{n_{j}-1}^{(j)}\Tr\left(X(tz_{n_{j}-1}^{(j)})\right)\right)\det\left(X(tz_{0}^{(i)})\right)\\
 & =\left(\al_{n_{j}-1}^{(j)}\right)^{r}\exp\left(\al_{n_{j}-1}^{(j)}\Tr\left(X(tz_{n_{j}-1}^{(j)})\right)\right)\det\left(tz_{0}^{(i)}\right)\det\left(tz_{0}^{(j)}\right)^{-1}.
\end{align*}
It follows that we have 
\begin{align*}
\cL^{(i,j)}\cdot\chi_{\lm}(tz,\al) & =\left(\al_{n_{j}-1}^{(j)}\right)^{r}\chi_{\lm}(tz;\al)\det\left(tz_{0}^{(i)}\right)\det\left(tz_{0}^{(j)}\right)^{-1}\\
 & =\left(\al_{n_{j}-1}^{(j)}\right)^{r}\chi_{\lm}(tz;\al+\ep^{(i)}-\ep^{(j)}).
\end{align*}
From this the contiguity relation (\ref{eq:cont-conf-2}) follows
and the proof of Theorem \ref{thm:cont-conf} is completed.

\section{\label{sec:Application-to-gamma}Application to gamma and beta functions
by Hermitian matrix integral}

\subsection{Beta and gamma functions by Hermitian matrix integral}

We recall here the beta and gamma functions defined by Hermitian matrix
integral. For the classical beta and gamma functions 
\begin{align*}
B(a,b) & :=\int_{0}^{1}u^{a-1}(1-u)^{b-1}du,\\
\G(a) & :=\int_{0}^{\infty}e^{-u}u^{a-1}du,
\end{align*}
their Hermitian matrix integral analogues are known. Let $\herm$
be the set of $r\times r$ complex Hermitian matrices. It is a real
vector space of dimension $r^{2}$. For $U=(U_{i,j})\in\herm,$ let
$dU$ be a volume form on $\herm$, which is the usual Euclidean volume
form given by

\begin{equation}
dU=\bigwedge_{i=1}^{r}dU_{i,i}\bigwedge_{i<j}\left(d\re(U_{i,j})\wedge d\im(U_{i,j})\right),\label{eq:hermInt-1}
\end{equation}
where $\bigwedge_{i=1}^{r}dU_{i,i}=dU_{1,1}\wedge dU_{2,2}\wedge\cdots\wedge dU_{r,r}$.
Note that this form can be written as 
\begin{equation}
dU=\left(\frac{\sqrt{-1}}{2}\right)^{r(r-1)/2}\bigwedge_{i=1}^{r}dU_{i,i}\bigwedge_{i\neq j}(dU_{i,j}\wedge dU_{j,i}).\label{eq:hermInt-2}
\end{equation}
Then the Hermitian matrix integral version of the beta and the gamma
are defined by 
\begin{align}
B_{r}(a,b) & :=\int_{0<U<1_{r}}(\det U)^{a-r}(\det(1_{r}-U))^{b-r}\,dU,\label{eq:herInt-3}\\
\G_{r}(a) & :=\int_{U>0}\etr(-U)(\det U)^{a-r}\,dU,\label{eq:hermInt-4}
\end{align}
respectively, where $\etr(U)=\exp(\Tr(U))$, $\Tr(U)$ being the trace
of $U$. The domain of integration is the set of positive definite
Hermitian matrices $U>0$ for the gamma, and the subset of $\herm$
satisfying $U>0$ and $1_{r}-U>0$ for the beta. It is known that
the gamma integral (\ref{eq:hermInt-4}) converges for $\re(a)>r-1$
and the beta integral (\ref{eq:herInt-3}) converges for $\re(a)>r-1,\re(b)>r-1$,
and they define holomorphic functions there. It is seen that $B_{r}$
and $\G_{r}$ reduce to the classical beta and gamma functions when
$r=1$, respectively. 

Next we consider the Radon HGF on the Grassmannian $\gras(2r,3r)$
of type $\lm=(1,1,1),(2,1)$ and explain that the above $B_{r}(a,b)$
and $\G_{r}(a)$ are understood as the Radon HGF of type $\lm=(1,1,1)$
and $(2,1)$, respectively. To establish a connection, we assume some
additional condition on the space of independent variable $z\in\mat'(2r,3r)$.
We state the condition for $z\in\mat'(2r,nr)$ with $n\geq3$ general. 

Let a partition $\lm=(n_{1},\dots,n_{\ell})$ of $n$ be given. We
say that $\mu=(m_{1},\dots,m_{\ell})\in\Z_{\geq0}^{\ell}$ is a subdiagram
of $\lm$ of weight $2$ if it satisfies 

\[
0\leq m_{k}\leq n_{k}\quad(\forall k)\quad\mbox{and}\quad|\mu|:=m_{1}+\cdots+m_{\ell}=2.
\]
More explicitly, $\mu$ with $|\mu|=2$ has the form either 
\begin{equation}
\mu=(0,\dots,0,\overset{i}{1},0,\dots,0,\overset{j}{1},0,\dots,0)\;\text{or }\mu=(0,\dots,0,\overset{i}{2},0,\dots,0).\label{eq:red-1}
\end{equation}
The first case means that $m_{i}=m_{j}=1$ and $m_{k}=0$ for $k\neq i,j$,
and the second case means that $m_{i}=2$ and $m_{k}=0$ for $k\neq i$.
Using this notation we define a Zariski open subset $Z_{\lm}\subset\mat'(2r,nr)$
of the space of independent variables of Radon HGF as follows. We
write $z\in\mat'(2r,nr)$ as $z=(z^{(1)},\dots,z^{(\ell)})$, where
$z^{(j)}$ is a $2r\times n_{j}r$ matrix which is called the $j$-th
block of $z$. For any $j$, $z^{(j)}$ is also written as
\[
z^{(j)}=(z_{0}^{(j)},\dots,z_{n_{j}-1}^{(j)}),\quad z_{p}^{(j)}\in\mat(2r,r).
\]
For a subdiagram $\mu\subset\lm$ with $|\mu|=2$, $\mu$ is either
of the form in (\ref{eq:red-1}). According as the form of $\mu$,
put
\[
z_{\mu}=(z_{0}^{(i)},z_{0}^{(j)})\;\text{or }z_{\mu}=(z_{0}^{(i)},z_{1}^{(i)}),
\]
respectively. Then $Z_{\lm}$ is defined as 
\[
Z_{\lm}:=\{z\in\mat(2r,nr)\mid\det z_{\mu}\neq0\;\text{for any subdiagram}\;\mu\subset\lm,|\mu|=2\}.
\]
It is easily seen that $Z_{\lm}$ is invariant by the action $\GL{2r}\curvearrowright\mat'(2r,nr)\curvearrowleft H_{\lm}$.
Taking into account the covariance property for the Radon HGF with
respect to the action of $\GL{2r}\times H_{\lm}$ given in Proposition
\ref{prop:covariance-1}, we take the independent variable $z$ to
a simpler form $\bx\in Z_{\lm}$ which gives a representative of the
orbit $O(z)$ of $z$ of the action.

The following lemma is given as Lemma 4.1 of \cite{kimura-2}.
\begin{lem}
\label{lem:Her-Ra-1}Let $\lm$ be a partition of $3$. For any $z\in Z_{\lm}$,
we can take a representative $\bx\in Z_{\lm}$ of the orbit $O(z)$
as given in the table: 

\bigskip

\begin{tabular}{|c|c|}
\hline 
$\lm$ & normal form $\bx$\tabularnewline
\hline 
\hline 
$(1,1,1)$ & $\left(\begin{array}{ccc}
1_{r} & 0 & 1_{r}\\
0 & 1_{r} & -1_{r}
\end{array}\right)$\tabularnewline
\hline 
$(2,1)$ & $\left(\begin{array}{ccc}
1_{r} & 0 & 0\\
0 & 1_{r} & 1_{r}
\end{array}\right)$\tabularnewline
\hline 
$(3)$ & $\left(\begin{array}{ccc}
1_{r} & 0 & 0\\
0 & 1_{r} & 0
\end{array}\right)$\tabularnewline
\hline 
\end{tabular}

\bigskip
\end{lem}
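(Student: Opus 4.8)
The plan is to prove Lemma~\ref{lem:Her-Ra-1} by exploiting the covariance property of Proposition~\ref{prop:covariance-1}, which tells us that the Radon HGF depends on $z$ only through the orbit $O(z)$ of the $\GL{2r}\times H_\lm$-action (up to the factor $\det(g)^{-r}$ and the character $\chi_\lm(h;\al)$), so it suffices to exhibit, for each partition $\lm$ of $3$ and each $z\in Z_\lm$, a group element $(g,h)\in\GL{2r}\times H_\lm$ carrying $z$ to the tabulated normal form $\bx$. First I would set up notation: write $z=(z^{(1)},\dots,z^{(\ell)})$ blockwise as in the text, and recall that the defining condition $z\in Z_\lm$ is exactly that $\det z_\mu\neq 0$ for every weight-$2$ subdiagram $\mu\subset\lm$; for $\lm$ a partition of $3$ this is a short explicit list of $2r\times 2r$ minors being invertible, which is precisely what is needed to make the normalization steps below legitimate.

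I would then treat the three cases separately. For $\lm=(3)$: here $z=(z_0,z_1,z_2)$ with $z_0,z_1\in\mat(2r,r)$, the only weight-$2$ subdiagram is $\mu=(2)$ with $z_\mu=(z_0,z_1)$, so $(z_0,z_1)\in\GL{2r}$; acting on the left by $g=(z_0,z_1)^{-1}\in\GL{2r}$ brings $z$ to $\left(\begin{smallmatrix}1_r & 0 & *\\ 0 & 1_r & *\end{smallmatrix}\right)$, and then one uses the $J_r(3)$-factor of $H_\lm$ acting on $z^{(1)}$ on the right to kill the remaining block $z_2$ (the upper-triangular Jordan block structure of $J_r(3)$ is exactly what allows clearing the $w^2$-coefficient without disturbing the already-normalized $z_0,z_1$); this yields the third row of the table. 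For $\lm=(2,1)$: write $z=(z^{(1)},z^{(2)})$ with $z^{(1)}=(z_0^{(1)},z_1^{(1)})$, $z^{(2)}=(z_0^{(2)})$; the weight-$2$ subdiagrams are $(2,0)$ giving $z_\mu=(z_0^{(1)},z_1^{(1)})$ and $(1,1)$ giving $z_\mu=(z_0^{(1)},z_0^{(2)})$, so both of these are in $\GL{2r}$. Left-multiply by $(z_0^{(1)},z_0^{(2)})^{-1}$ to make $z_0^{(1)}=\left(\begin{smallmatrix}1_r\\0\end{smallmatrix}\right)$, $z_0^{(2)}=\left(\begin{smallmatrix}0\\1_r\end{smallmatrix}\right)$, leaving $z_1^{(1)}=\left(\begin{smallmatrix}a\\b\end{smallmatrix}\right)$; invertibility of $(z_0^{(1)},z_1^{(1)})$ forces $b\in\GL r$, then the $J_r(2)$-factor acting on the right rescales/adjusts $z^{(1)}$ and a residual $\GL r$ in the stabilizer fixes up the remaining freedom to reach $\left(\begin{smallmatrix}1_r & 0 & 0\\ 0 & 1_r & 1_r\end{smallmatrix}\right)$. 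For $\lm=(1,1,1)$: all three blocks are single $\mat(2r,r)$ matrices $z^{(1)},z^{(2)},z^{(3)}$, the weight-$2$ subdiagrams are the three pairs $(z^{(i)},z^{(j)})$, each in $\GL{2r}$; left-multiply by $(z^{(1)},z^{(2)})^{-1}$ to get $z^{(1)}=\left(\begin{smallmatrix}1_r\\0\end{smallmatrix}\right)$, $z^{(2)}=\left(\begin{smallmatrix}0\\1_r\end{smallmatrix}\right)$, $z^{(3)}=\left(\begin{smallmatrix}p\\q\end{smallmatrix}\right)$ with $p,q\in\GL r$ (from invertibility of the remaining two pairs), and then the $(\GL r)^3$-action on the right lets one independently rescale the three blocks to normalize $p\mapsto 1_r$, $q\mapsto -1_r$, giving the first row.

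The main obstacle I anticipate is the bookkeeping in the second and third cases: one must carefully track the residual stabilizer of the partially-normalized form (a subgroup of $\GL{2r}\times H_\lm$) and verify that it still acts transitively enough on the remaining coordinates to reach the stated $\bx$ — in particular checking that the specific constants ($1_r$, $-1_r$) in the $(1,1,1)$ normal form and the $1_r$ in the $(2,1)$ normal form are genuinely attainable and not merely ``some invertible matrix.'' A clean way to organize this is to observe that the columns of $z$ span $\C^{2r}$ and reinterpret the normalization as a choice of adapted basis, which makes the transitivity transparent; the role of $Z_\lm$ is precisely to guarantee the relevant sub-bases are bases. Since the lemma is quoted verbatim from \cite{kimura-2} (Lemma~4.1 there), I would present this as a brief self-contained verification and refer the reader to \cite{kimura-2} for the general weight-$k$ statement, noting that all the nontrivial input is the transitivity just described together with the covariance formula of Proposition~\ref{prop:covariance-1}.
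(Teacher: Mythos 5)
Your proposal is correct and follows essentially the route the paper takes: the paper itself only cites Lemma 4.1 of \cite{kimura-2} for this statement, but the two-sided reduction you describe (left multiplication by the inverse of an invertible $2r\times2r$ block of $z$, then the right $H_{\lm}$-action combined with the residual block-diagonal or block-triangular left $\GL{2r}$-action) is exactly what is carried out explicitly for $\lm=(1,1,1)$ and $(2,1)$ in Lemmas \ref{lem:Beta-1} and \ref{lem:Gamma-1}. The only point to state more carefully is the one you already flag: the right $(\GL r)^{3}$-action alone cannot send $p\mapsto 1_{r}$ and $q\mapsto-1_{r}$ independently (both components of $z^{(3)}$ are multiplied by the same $h^{(3)}$), and in the $(3)$ case the $w^{2}$-coefficient cannot be cleared by $h_{2}$ alone without also using $h_{1}$ and a compensating left action; both are settled by the residual-stabilizer computation you outline.
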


The representative $\bx\in Z_{\lm}$ given in Lemma \ref{lem:Her-Ra-1}
is called a \emph{normal form} of $z\in Z_{\lm}$. For the normal
form $\bx\in Z_{\lm}$, we write down the integral for the corresponding
Radon HGF in the case $\lm=(1,1,1),(2,1)$. The space of integration
variables is $T=\gras(r,2r)\simeq\GL r\backslash\mat'(r,2r)$ with
the homogeneous coordinates $t=(t',t'')\in\mat'(r,2r)$, $t',t''\in\mat(r)$.
Let $\{[t]\in T\mid\det t'\neq0\}$ be an affine neighbourhood of
$T$ and let $u=(u_{i,j})\in\mat(r)$ be the affine coordinates defined
by $t=t'(1_{r},u)$, namely $u=(t')^{-1}t''$. Then we know that 
\[
F_{\lm}(z,\al;C)=\int_{C}\chi_{\lm}(\vec{u}z;\al)du,\quad\vec{u}=(1_{r},u),\;du=\bigwedge_{1\leq i,j\leq r}du_{i,j}.
\]
Then we have 

\begin{align}
F_{(1,1,1)}(\bx,\al;C) & =\int_{C}(\det u)^{\al^{(2)}}(\det(1-u))^{\al^{(3)}}du,\label{eq:her-ra-3}\\
F_{(2,1)}(\bx,\al;C) & =\int_{C}e^{\al_{1}^{(1)}\Tr(u)}(\det u)^{\al_{0}^{(2)}}du,\label{eq:her-ra-4}
\end{align}
where the parameters $\al=(\al^{(1)},\al^{(2)},\al^{(3)})\in\C^{3}$
for $\lm=(1,1,1)$ satisfies
\begin{equation}
\al^{(1)}+\al^{(2)}+\al^{(3)}=-2r\label{eq:her-ra-5}
\end{equation}
and the parameters $\al=(\al_{0}^{(1)},\al_{1}^{(1)},\al_{0}^{(2)})\in\C^{3}$
for $\lm=(2,1)$ satisfies
\begin{equation}
\al_{0}^{(1)}+\al_{0}^{(2)}=-2r.\label{eq:her-ra-6}
\end{equation}
We assume $\al_{1}^{(1)}=-1$ in addition in case $\lm=(2,1)$. For
example we check the expression for $F_{(2,1)}(\bx,\al;C)$. Note
that the character is 
\begin{align*}
\chi_{(2,1)}(h;\al) & =(\det h_{0}^{(1)})^{\al_{0}^{(1)}}\exp\left(\al_{1}^{(1)}\Tr\left((h_{0}^{(1)})^{-1}h_{1}^{(1)}\right)\right)\cdot(\det h_{0}^{(2)})^{\al_{0}^{(2)}}
\end{align*}
and 
\[
\vec{u}\bx=(1_{r},u)\left(\begin{array}{ccc}
1_{r} & 0 & 0\\
0 & 1_{r} & 1_{r}
\end{array}\right)=(1_{r},u,u).
\]
Hence
\[
\chi_{(2,1)}(\vec{u}\bx;\al)=(\det1_{r})^{\al_{0}^{(1)}}e^{\al_{1}^{(1)}\Tr(u)}(\det u)^{\al_{0}^{(2)}}=e^{\al_{1}^{(1)}\Tr(u)}(\det u)^{\al_{0}^{(2)}}
\]
is the integrand of $F_{(2,1)}(\bx,\al;C)$. Note that we can attribute
$\al_{0}^{(2)}$ an arbitrary value, $a-r$ for example, since the
condition for $\al_{0}^{(2)}$ is only $\al_{0}^{(1)}+\al_{0}^{(2)}=-2r$.
By the condition $\al_{1}^{(1)}=-1$, the integrand has the same form
as the Hermitian matrix integral (\ref{eq:hermInt-4}) for $\G_{r}(a)$.
As for the space $\herm$ of integration variables for the Hermitian
matrix integral, we remark that $\herm$ is a real form of $\mat(r)$
in the sense that any $u\in\mat(r)$ can be written as $u=X+\sqrt{-1}Y$,
$X,Y\in\herm$ uniquely. Moreover we see from (\ref{eq:hermInt-2})
that the form $du$ coincides with $dU$ modulo constant when $u$
is restricted to $\herm$. Hence in the integrals (\ref{eq:her-ra-3})
and (\ref{eq:her-ra-4}), we can choose $r^{2}$-chain in the space
$\herm$. Hence the restriction $F_{\lm}(\bx,\al;C)$ of the Radon
HGF to the normal form $\bx$ gives the beta function (\ref{eq:herInt-3})
and the gamma function (\ref{eq:hermInt-4}) corresponding to the
partitions $(1,1,1)$ and $(2,1)$, respectively. The correspondence
of parameters is 

\begin{align*}
(\al^{(1)},\al^{(2)},\al^{(3)}) & =(-a-b,a-r,b-r)\quad\text{for}\quad\lm=(1,1,1),\\
(\al_{0}^{(1)},\al_{0}^{(1)},\al_{0}^{(2)}) & =(-a-r,-1,a-r)\quad\text{for}\quad\lm=(2,1).
\end{align*}
 For the beta and gamma functions $B_{r}(a,b),\G_{r}(a)$, the following
result is known \cite{Faraut}.
\begin{prop}
\label{prop:Her-be-ga}We have 
\begin{align*}
(1)\quad & \G_{r}(a)=\pi^{r(r-1)/2}\G(a)\G(a-1)\cdots\G(a-r+1),\\
(2)\quad & B_{r}(a,b)=\frac{\G_{r}(a)\G_{r}(b)}{\G_{r}(a+b)},
\end{align*}
where $\G(a)$ is the classical gamma function. 
\end{prop}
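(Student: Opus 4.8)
The plan is to derive both formulas directly from the contiguity relations supplied by Theorems \ref{thm:conti-nonconf} and \ref{thm:cont-conf}, applied to the normal forms $\bx$ identified above, and then bootstrap to the closed forms (1) and (2) by induction, finally pinning down the $r=1$ base case using the classical $\G(a+1)=a\G(a)$. The reduction to the Radon HGF is already in place: $\G_{r}(a)$ is $F_{(2,1)}(\bx,\al;C)$ with $(\al_{0}^{(1)},\al_{1}^{(1)},\al_{0}^{(2)})=(-a-r,-1,a-r)$ and $B_{r}(a,b)$ is $F_{(1,1,1)}(\bx,\al;C)$ with $(\al^{(1)},\al^{(2)},\al^{(3)})=(-a-b,a-r,b-r)$, subject to the linear constraints (\ref{eq:her-ra-5}), (\ref{eq:her-ra-6}).

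First I would treat the gamma case. For $\lm=(2,1)$, I apply Theorem \ref{thm:cont-conf}(1) with the roles of the blocks chosen so that the shift is $\al_{0}^{(2)}\mapsto\al_{0}^{(2)}+1$, i.e.\ $a\mapsto a+1$, while keeping $\al_{1}^{(1)}=-1$ fixed; the $b$-function contributes the factor $\al_{0}^{(j)}(\al_{0}^{(j)}+1)\cdots(\al_{0}^{(j)}+r-1)$ evaluated at the appropriate index. The key computational point is to evaluate the differential operator $\cL^{(i,j)}=\det(\,^{t}z_{0}^{(i)}\pa_{n_{j}-1}^{(j)})$ on the normal form $\bx$ for $(2,1)$ and check that, under the identification of the chain with an $r^{2}$-cycle in $\herm$, it acts as the identity up to the $b$-function factor — this is exactly the matrix-integral analogue of multiplying $u^{s}$ by $e^{-u}$ and integrating, as flagged in Remark \ref{rem:Cont-1}. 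Translating the resulting recursion into the variable $a$ gives $\G_{r}(a+1)=a(a-1)\cdots(a-r+1)\,\G_{r}(a)$, which is (\ref{eq:intro-8}). Iterating $r$ times and comparing with the classical $\G$-recursion shows $\G_{r}(a)/\bigl(\G(a)\G(a-1)\cdots\G(a-r+1)\bigr)$ is invariant under $a\mapsto a+1$; since both sides are holomorphic and of controlled growth on a right half-plane, this quotient is a constant, and evaluating at a convenient point (e.g.\ using the classical Gaussian integral computation of $\G_{r}$ at integer or half-integer arguments, or directly the known value at one point) identifies the constant as $\pi^{r(r-1)/2}$, giving (1).

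For the beta case I would apply Theorem \ref{thm:conti-nonconf} to $\lm=(1,1,1)$ with the shift $\ep^{(i)}-\ep^{(j)}$ chosen so that $\al^{(2)}\mapsto\al^{(2)}+1$ and, say, $\al^{(1)}\mapsto\al^{(1)}-1$, preserving the constraint (\ref{eq:her-ra-5}); this corresponds to $a\mapsto a+1$ with $b$ fixed. Evaluating $\cL^{(i,j)}=\det(\,^{t}z^{(i)}\pa^{(j)})$ on the normal form for $(1,1,1)$ and using the $b$-function $\bb(\al^{(j)})=\al^{(j)}(\al^{(j)}+1)\cdots(\al^{(j)}+r-1)$, together with the parameter dictionary $\al^{(1)}=-a-b$, $\al^{(2)}=a-r$, yields the recursion (\ref{eq:intro-7}): $B_{r}(a+1,b)=\dfrac{a(a-1)\cdots(a-r+1)}{(a+b)(a+b-1)\cdots(a+b-r+1)}B_{r}(a,b)$. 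Then (2) follows formally: the right-hand side $\G_{r}(a)\G_{r}(b)/\G_{r}(a+b)$ satisfies the same first-order recursion in $a$ by part (1), so $B_{r}(a,b)\G_{r}(a+b)/(\G_{r}(a)\G_{r}(b))$ is periodic in $a$; holomorphy and growth bounds force it constant in $a$ (and by symmetry in $b$), and the constant is $1$ by checking one value — for instance the limit $a=r$, where $B_{r}(r,b)=\int_{0<U<1_r}(\det(1_r-U))^{b-r}\,dU$ reduces to an integral computable by the same recursion, or by comparing leading behaviour.

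The main obstacle, I expect, is not the algebra of the contiguity relations but the two ``rigidity'' steps: (i) justifying that the chain $C(z)$ used to represent $B_r,\G_r$ as genuine convergent matrix integrals over $\herm$ is carried to itself (not merely to some homologous twisted cycle) by the parameter shift, so that the contiguity identity descends to an honest identity between the integrals rather than just between multivalued branches — here one must invoke the local-system structure of $\{C(z)\}$ over $V$ and a convergence/analytic-continuation argument valid on $\re(a)>r-1$; and (ii) evaluating the period constants $\pi^{r(r-1)/2}$ and $1$, which the contiguity relations alone cannot see, so an independent computation at a special parameter value (the Selberg-type or Gaussian integral over $\herm$) is needed. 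Everything else is a direct unwinding of Theorems \ref{thm:conti-nonconf} and \ref{thm:cont-conf} on the normal forms of Lemma \ref{lem:Her-Ra-1}.
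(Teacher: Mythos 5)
The paper does not prove this proposition at all: it is quoted as a known result from \cite{Faraut}, and the logical flow of Section \ref{sec:Application-to-gamma} is the reverse of yours --- the paper notes that the contiguity relations (\ref{eq:intro-7}), (\ref{eq:intro-8}) \emph{follow} from Proposition \ref{prop:Her-be-ga}, and then re-derives those contiguity relations independently (explicitly ``without using Proposition \ref{prop:Her-be-ga}'') as an application of Theorems \ref{thm:conti-nonconf} and \ref{thm:cont-conf}. Your proposal inverts this and tries to prove the closed forms (1) and (2) \emph{from} the contiguity relations. That is not circular, and the first half of your argument (deriving the recursions for $\G_r$ and $B_r$ on the normal forms) is exactly what the paper does in Sections 4.2--4.3; but the second half has a genuine gap, and it is precisely where the actual content of the proposition lives.

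The gap is your ``rigidity'' step. A first-order recursion only determines $\G_r(a)/\bigl(\G(a)\cdots\G(a-r+1)\bigr)$ up to an arbitrary $1$-periodic holomorphic factor, and ``holomorphy and growth bounds force it constant'' is not available in the naive form you invoke: a $1$-periodic function on a right half-plane is constant only if it is $o(e^{2\pi|\im a|})$ there, whereas $|\G(\sigma+it)|$ decays like $e^{-\pi|t|/2}$ on vertical lines while $|\G_r(\sigma+it)|\le\G_r(\sigma)$ stays bounded, so the quotient can a priori grow like $e^{r\pi|t|/2}$ --- which exceeds the threshold already for $r\ge 4$. One must instead run a real-variable uniqueness argument (log-convexity of $a\mapsto\G_r(a)$ on $(r-1,\infty)$, \`a la Bohr--Mollerup, or a Wielandt-type boundedness statement on a carefully chosen strip), and neither is supplied. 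Likewise the two normalizing constants $\pi^{r(r-1)/2}$ and $1$ cannot be seen by the contiguity relations; evaluating them ``at a convenient point'' requires diagonalizing $U$ and computing a Selberg/Laguerre-type eigenvalue integral with the Vandermonde-squared Jacobian --- but that computation, once done, yields (1) and (2) directly and is in fact the standard proof in \cite{Faraut}. So the contiguity route does not shortcut the proposition; it defers all of its content to the two steps you flag as obstacles, and those steps are the proof.
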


It is seen from this proposition that $\G_{r}(a)$ satisfies the contiguity
relation 
\[
\G_{r}(a+1)=a(a-1)\cdots(a-r+1)\G_{r}(a).
\]
Similar result can be obtained also for $B_{r}(a,b)$. In the following
we derive the above contiguity relations by applying Theorems \ref{thm:conti-nonconf}
and \ref{thm:cont-conf} to the Radon HGF corresponding to $\lm=(1,1,1)$
and $\lm=(2,1)$ without using Proposition \ref{prop:Her-be-ga}.

\subsection{Contiguity for the Radon beta}

We derive the contiguity relations for $B_{r}(a,b)$ using the contiguity
relation for the Radon HGF of type $\lm=(1,1,1)$ given in Theorem
\ref{thm:conti-nonconf}. In this subsection we write $F(z;\al)$
for $F_{(1,1,1)}(z;\al)$. Recall that the contiguity relation for
a root $\ep^{(i)}-\ep^{(j)}$ is given by 
\begin{equation}
\cL^{(i,j)}F(z,\al)=\bb(\al^{(j)})F(z,\al+\ep^{(i)}-\ep^{(j)}),\label{eq:Beta-1}
\end{equation}
where $\bb(s)=s(s+1)\cdots(s+r-1)$ is the $b$-function and the differential
operator $\cL^{(i,j)}$ of order $r$ is defined by
\begin{equation}
\cL^{(i,j)}=\det(\tr z^{(i)}\pa^{(j)})\label{eq:Beta-2}
\end{equation}
with $\pa^{(j)}=(\pa/\pa z_{a.b}^{(j)})_{0\leq a<2r,1\leq b\leq r}$.
\begin{prop}
\label{prop:Beta-0}For the case $\lm=(1,1,1)$ the contiguity relation
(\ref{eq:Beta-1}) for the Radon HGF gives 
\begin{align}
B_{r}(a+1,b) & =\frac{a(a-1)\cdots(a-r+1)}{(a+b)(a+b-1)\cdots(a+b-r+1)}B_{r}(a,b),\label{eq:Beta-3}\\
B_{r}(a,b+1) & =\frac{b(b-1)\cdots(b-r+1)}{(a+b)(a+b-1)\cdots(a+b-r+1)}B_{r}(a,b),\label{eq:Beta-4}
\end{align}
or the relations derived from the above two.
\end{prop}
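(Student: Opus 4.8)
The plan is to specialize the contiguity relation (\ref{eq:Beta-1}) to the normal form $\bx$ of Lemma \ref{lem:Her-Ra-1}, and then to evaluate its left-hand side $\cL^{(i,j)}F(\bx,\al)$ a second time, independently, by means of the covariance of the Radon HGF under $\GL{2r}\times H_{\lm}$ (Proposition \ref{prop:covariance-1}); comparing the two evaluations yields (\ref{eq:Beta-3}) and (\ref{eq:Beta-4}). Throughout I use the dictionary $(\al^{(1)},\al^{(2)},\al^{(3)})=(-a-b,\,a-r,\,b-r)$ of Section \ref{sec:Application-to-gamma}, so that, for the cycle $C$ chosen there, $F(\bx,\al;C)=B_r(a,b)$. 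Since $\al\mapsto\al+\ep^{(i)}-\ep^{(j)}$ is an integer shift it changes neither the local system $\cL_{\bx}$ on $X_{\bx}$ nor Assumption \ref{assu:Radon-conf-4}, so $F(\bx,\al+\ep^{(i)}-\ep^{(j)};C)=B_r(a',b')$ with $(a',b')$ the corresponding integer shift of $(a,b)$. In particular it suffices to prove the relations for generic $a,b$ (so that the $\al^{(k)}$ avoid $\Z$); the general case follows by analytic continuation in $(a,b)$.

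To obtain (\ref{eq:Beta-3}) I would take the root $\ep^{(2)}-\ep^{(1)}$, i.e. $(i,j)=(2,1)$, since $B_r(a+1,b)$ corresponds to $\al+\ep^{(2)}-\ep^{(1)}$; for (\ref{eq:Beta-4}) I would take $(i,j)=(3,1)$. Evaluating (\ref{eq:Beta-1}) at $z=\bx$ then gives immediately
\begin{equation}
\cL^{(2,1)}F(\bx,\al)=\bb(\al^{(1)})\,B_r(a+1,b),\qquad \cL^{(3,1)}F(\bx,\al)=\bb(\al^{(1)})\,B_r(a,b+1).\label{eq:beta-pf-1}
\end{equation}
The content of the proof is to compute the left-hand sides of (\ref{eq:beta-pf-1}) by a different route. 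Here one uses that the first-order operators $L_{p,q}^{(i,j)}$ out of which $\cL^{(i,j)}=\det(L_{p,q}^{(i,j)})$ is built mutually commute and that $(\sum_{p,q}S_{p,q}E_{p,q}^{(i,j)})^2=0$ (true since $i\neq j$), so that $\Phi_{ij}(S):=\bx\exp(\sum_{p,q}S_{p,q}E_{p,q}^{(i,j)})=\bx\,(1_N+E^{(i,j)}(S))$, where $E^{(i,j)}(S)\in\gl(N)$ has $S\in\mat(r)$ in its $(i,j)$-block and $0$ elsewhere, satisfies $(\Phi_{ij})_*(\pa/\pa S_{p,q})=L_{p,q}^{(i,j)}$ along $\Phi_{ij}$; consequently
\[
\cL^{(i,j)}F(\bx,\al)=\big[\det(\pa/\pa S_{p,q})\,F(\Phi_{ij}(S),\al)\big]_{S=0}.
\]

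Next I would note that $\Phi_{ij}(S)\in O(\bx)$ for $S$ near $0$ and solve, by elementary linear algebra on the three $2r\times r$ blocks, $\Phi_{ij}(S)=g(S)\,\bx\,h(S)$ with $g(S)\in\GL{2r}$, $h(S)\in H_{\lm}$. By Proposition \ref{prop:covariance-1} one then has $F(\Phi_{ij}(S),\al)=\det g(S)^{-r}\,\chi_{\lm}(h(S),\al)\,B_r(a,b)$, which for $\lm=(1,1,1)$ and $\bx$ as in Lemma \ref{lem:Her-Ra-1} collapses to a single power of $\det(1_r+S)$: it equals $\det(1_r+S)^{-r-\al^{(2)}}B_r(a,b)$ for $(i,j)=(2,1)$ and $\det(1_r+S)^{-r-\al^{(3)}}B_r(a,b)$ for $(i,j)=(3,1)$, the exponents being $-a$ and $-b$ by (\ref{eq:her-ra-5}). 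Cayley's formula (\ref{eq:capelli-1-1}) applied with $f=\det(1_r+S)$ gives $\det(\pa/\pa S_{p,q})\det(1_r+S)^{\kappa}=\bb(\kappa)\det(1_r+S)^{\kappa-1}$, so setting $S=0$ yields $\cL^{(2,1)}F(\bx,\al)=\bb(-a)B_r(a,b)$ and $\cL^{(3,1)}F(\bx,\al)=\bb(-b)B_r(a,b)$. Comparing with (\ref{eq:beta-pf-1}) and using $\bb(-a)=(-1)^r a(a-1)\cdots(a-r+1)$, $\bb(-b)=(-1)^r b(b-1)\cdots(b-r+1)$ and $\bb(\al^{(1)})=\bb(-a-b)=(-1)^r(a+b)(a+b-1)\cdots(a+b-r+1)$ gives exactly (\ref{eq:Beta-3}) and (\ref{eq:Beta-4}). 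The remaining four roots $\ep^{(1)}-\ep^{(2)}$, $\ep^{(1)}-\ep^{(3)}$, $\ep^{(2)}-\ep^{(3)}$, $\ep^{(3)}-\ep^{(2)}$ are handled identically and give the relations obtained from (\ref{eq:Beta-3}), (\ref{eq:Beta-4}) by $a\mapsto a-1$, $b\mapsto b-1$, $(a,b)\mapsto(a+1,b-1)$, $(a,b)\mapsto(a-1,b+1)$, which is what ``the relations derived from the above two'' refers to.

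The hard part is the identification of $F(\Phi_{ij}(S),\al)$ with the explicit power of $\det(1_r+S)$: one must exhibit the decomposition $\Phi_{ij}(S)=g(S)\bx h(S)$ concretely (a short Gaussian-elimination computation) and track the cycle in Proposition \ref{prop:covariance-1}(1), which for $S$ near $0$ stays in the flat family of the chosen local section, so the constant is again $B_r(a,b)$. It is also worth checking consistency with the naive ``differentiation under the integral sign'', which instead rewrites $\cL^{(2,1)}F(\bx,\al)$ as $\bb(\al^{(1)})B_r(a+1,b)$ directly; the agreement of this with $\bb(-a)B_r(a,b)$ is precisely (\ref{eq:Beta-3}), so the whole argument amounts to computing the single number $\cL^{(i,j)}F(\bx,\al)$ in two ways. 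A minor point is the passage from generic $\al$ (where Theorem \ref{thm:conti-nonconf} is available) to the convergence range $\re\,a,\,\re\,b>r-1$ of $B_r$, by analytic continuation.
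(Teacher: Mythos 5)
Your proposal is correct, and the computations you state (exponents $-a$ and $-b$, the $b$-function bookkeeping) check out, but the route you take to evaluate $\cL^{(i,j)}F$ differs from the paper's in an interesting way. The paper also reduces to the normal form $\bx$ via Lemma \ref{lem:Beta-1} and Proposition \ref{prop:covariance-1}, writing $F(z,\al)=U(z;\al)F(\bx,\al)$ with an explicit cocycle $U$, but it then computes $\cL^{(i,j)}U(z;\al)$ for \emph{general} $z$ by re-expressing the operator in the adapted coordinates $(\tilde{z}^{(1)},\tilde{z}^{(2)},v)$; this requires a separate lemma for each root, and in the case $\ep^{(1)}-\ep^{(2)}$ a genuinely nontrivial computation of $\pa v/\pa z^{(2)}$ showing that $\cL^{(1,2)}$ acts on $U$ as $(-1)^{r}\det v_{2}\cdot\det(\pa_{v_{1}})$. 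You instead exploit the fact that the commuting fields $L_{p,q}^{(i,j)}$ are tangent to the orbit of the abelian root subgroup through $\bx$, so that $\cL^{(i,j)}F(\bx,\al)=\bigl[\det(\pa/\pa S_{p,q})F(\bx\exp E^{(i,j)}(S),\al)\bigr]_{S=0}$, and then the Gaussian elimination of Lemma \ref{lem:Beta-1} applied to the one curve $\bx(1_{N}+E^{(i,j)}(S))$ collapses $U$ to a single power of $\det(1_{r}\pm S)$, after which Cayley's formula (\ref{eq:capelli-1-1}) finishes the job. This buys uniformity (all six roots are handled by the same two-line computation, with no case-by-case operator rewriting) at the price of having to justify the pullback identity $(\Phi_{ij})_{*}(\pa/\pa S_{p,q})=L_{p,q}^{(i,j)}$, which is immediate from the commutativity of the root subgroup, and of carrying out the (easy but unwritten) decompositions $\Phi_{ij}(S)=g(S)\bx h(S)$; you correctly flag both points as well as the cycle-tracking issue, which the paper glosses over to the same extent. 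Two small remarks: for the roots $\ep^{(i)}-\ep^{(3)}$ the relevant factor is a power of $\det(1_{r}-S)$ rather than $\det(1_{r}+S)$, which introduces the sign $(-1)^{r}$ exactly as in the paper's $\ep^{(2)}-\ep^{(3)}$ computation; and your choice of the roots $(2,1)$ and $(3,1)$ (rather than the paper's $(1,2)$, $(1,3)$, $(2,3)$) is what lets you land directly on the shifts $a\mapsto a+1$ and $b\mapsto b+1$ appearing in (\ref{eq:Beta-3}) and (\ref{eq:Beta-4}).
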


For the proof of this result, we use the following lemma.
\begin{lem}
\label{lem:Beta-1}For any $z=(z^{(1)},z^{(2)},z^{(3)})\in Z_{(1,1,1)}$,
there exist $g\in\GL{2r}$ and $h\in H_{(1,1,1)}$ such that $z=g\bx h$
with 

\[
\bx=\left(\begin{array}{ccc}
1_{r} & 0 & 1_{r}\\
0 & 1_{r} & -1_{r}
\end{array}\right),
\]
where 
\[
g=(z^{(1)},z^{(2)})\diag(1_{r},-v_{1}^{-1}v_{2}),\quad h=\diag(1_{r},-v_{2}^{-1}v_{1},v_{1})
\]
with $v_{1},v_{2}\in\GL r$ being given by
\begin{equation}
v:=\left(\begin{array}{c}
v_{1}\\
v_{2}
\end{array}\right)=(z^{(1)},z^{(2)})^{-1}z^{(3)}.\label{eq:beta-1}
\end{equation}
\end{lem}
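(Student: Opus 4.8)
The plan is to verify the claimed decomposition $z = g\bx h$ by direct matrix multiplication, using the explicit $g$ and $h$ given in terms of $v_1, v_2$, and then to check that $g \in \GL{2r}$ and $h \in H_{(1,1,1)}$. First I would observe that since $z \in Z_{(1,1,1)}$, the subdiagram condition for $\mu = (1,1,0)$ forces $z_\mu = (z^{(1)}, z^{(2)}) \in \GL{2r}$ to be invertible, so $v = (z^{(1)}, z^{(2)})^{-1} z^{(3)}$ in \eqref{eq:beta-1} is well-defined and $v_1, v_2 \in \mat(r)$. The subdiagram conditions for $\mu = (1,0,1)$ and $\mu = (0,1,1)$ then translate, via the block computations $(z^{(1)}, z^{(3)}) = (z^{(1)}, z^{(2)})\left(\begin{smallmatrix} 1_r & v_1 \\ 0 & v_2\end{smallmatrix}\right)$ and $(z^{(2)}, z^{(3)}) = (z^{(1)}, z^{(2)})\left(\begin{smallmatrix} 0 & v_1 \\ 1_r & v_2\end{smallmatrix}\right)$, into $\det v_2 \neq 0$ and $\det v_1 \neq 0$ respectively; hence $v_1, v_2 \in \GL r$, which is exactly what is needed for $g$ and $h$ to make sense.

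Next I would carry out the multiplication $g \bx h$ block by block. Writing $g = (z^{(1)}, z^{(2)})\,\diag(1_r, -v_1^{-1}v_2)$ and $\bx = \left(\begin{smallmatrix} 1_r & 0 & 1_r \\ 0 & 1_r & -1_r \end{smallmatrix}\right)$, one computes $g\bx = (z^{(1)}, z^{(2)})\left(\begin{smallmatrix} 1_r & 0 & 1_r \\ 0 & -v_1^{-1}v_2 & v_1^{-1}v_2 \end{smallmatrix}\right)$, whose three $2r\times r$ blocks are $z^{(1)}$, $-z^{(2)}v_1^{-1}v_2$, and $z^{(1)} + z^{(2)}v_1^{-1}v_2$. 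Then right-multiplying by $h = \diag(1_r, -v_2^{-1}v_1, v_1)$ scales the blocks to $z^{(1)}$, $z^{(2)}$, and $(z^{(1)} + z^{(2)}v_1^{-1}v_2)v_1 = z^{(1)}v_1 + z^{(2)}v_2$. It remains to identify the last block with $z^{(3)}$: but $z^{(1)}v_1 + z^{(2)}v_2 = (z^{(1)}, z^{(2)})\left(\begin{smallmatrix} v_1 \\ v_2\end{smallmatrix}\right) = (z^{(1)}, z^{(2)})(z^{(1)}, z^{(2)})^{-1} z^{(3)} = z^{(3)}$ by the definition \eqref{eq:beta-1} of $v$. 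This establishes $z = g\bx h$.

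Finally I would confirm the group memberships: $g = (z^{(1)}, z^{(2)})\,\diag(1_r, -v_1^{-1}v_2)$ is a product of two invertible $2r \times 2r$ matrices (the first because of the $\mu=(1,1,0)$ condition, the second because $v_1, v_2 \in \GL r$), so $g \in \GL{2r}$; and $h = \diag(1_r, -v_2^{-1}v_1, v_1)$ is block-diagonal with three invertible $r\times r$ blocks, hence $h \in H_{(1,1,1)} = \{\diag(h^{(1)}, h^{(2)}, h^{(3)}) \mid h^{(k)} \in \GL r\}$. This completes the proof.

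The only mildly delicate point — the ``main obstacle'', if there is one — is making sure the subdiagram conditions defining $Z_{(1,1,1)}$ are correctly unpacked into the invertibility of $v_1$ and $v_2$ separately, since the definition of $z_\mu$ for $\mu$ of the first type in \eqref{eq:red-1} selects the blocks $(z_0^{(i)}, z_0^{(j)})$ and one must track which pair of the three columns $z^{(1)}, z^{(2)}, z^{(3)}$ each subdiagram picks out. Everything else is bookkeeping with $2r \times r$ blocks and the defining relation of $v$.
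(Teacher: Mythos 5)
Your overall strategy (direct block-by-block verification of $z=g\bx h$, together with unpacking the subdiagram conditions to get $v_{1},v_{2}\in\GL r$) is reasonable, and your reduction of $\det(z^{(1)},z^{(3)})\neq0$ and $\det(z^{(2)},z^{(3)})\neq0$ to $\det v_{2}\neq0$ and $\det v_{1}\neq0$ is correct. But there is a genuine error at the decisive step. With $g=(z^{(1)},z^{(2)})\diag(1_{r},-v_{1}^{-1}v_{2})$ and $h=\diag(1_{r},-v_{2}^{-1}v_{1},v_{1})$ as you take them, the third block of $g\bx h$ is
\[
\bigl(z^{(1)}+z^{(2)}v_{1}^{-1}v_{2}\bigr)v_{1}=z^{(1)}v_{1}+z^{(2)}v_{1}^{-1}v_{2}v_{1},
\]
not $z^{(1)}v_{1}+z^{(2)}v_{2}$. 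Your identification of these two expressions silently uses $v_{1}^{-1}v_{2}v_{1}=v_{2}$, i.e.\ that $v_{1}$ and $v_{2}$ commute, which fails for generic $z$ once $r\geq2$. So the computation as written does not establish $g\bx h=z$; in fact with these $g,h$ the identity is false in general.

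The source of the trouble is that the displayed formulas for $g$ and $h$ in the lemma statement have the factors in the wrong order (a typo in the paper): the correct choice, and the one the paper's own proof actually produces, is $g=(z^{(1)},z^{(2)})\diag(1_{r},-v_{2}v_{1}^{-1})$ and $h=\diag(1_{r},-v_{1}v_{2}^{-1},v_{1})$. With these your computation goes through: the middle block becomes $-z^{(2)}v_{2}v_{1}^{-1}\cdot(-v_{1}v_{2}^{-1})=z^{(2)}$ and the third block becomes $(z^{(1)}+z^{(2)}v_{2}v_{1}^{-1})v_{1}=z^{(1)}v_{1}+z^{(2)}v_{2}=z^{(3)}$. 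Note that the paper proceeds constructively, left-multiplying by $(z^{(1)},z^{(2)})^{-1}$ and then solving $v_{1}h_{3}^{-1}=1_{r}$, $h_{2}v_{2}h_{3}^{-1}=-1_{r}$ for $h_{2},h_{3}$, so the correct order of the factors comes out automatically; a direct verification like yours should instead have detected the discrepancy rather than passing over it.
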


\begin{proof}
Noting that $z\in Z_{(1,1,1)}$ implies $\det(z^{(p)},z^{(q)})\neq0$
for any $1\leq p\neq q\leq3$, put $g_{1}=(z^{(1)},z^{(2)})\in\GL{2r}$
and consider 
\[
g_{1}^{-1}z=\left(\begin{array}{ccc}
1_{r} & 0 & v_{1}\\
0 & 1_{r} & v_{2}
\end{array}\right),\quad v=\left(\begin{array}{c}
v_{1}\\
v_{2}
\end{array}\right)=g_{1}^{-1}z^{(3)}.
\]
Note that $g_{1}^{-1}z\in Z_{(1,1,1)}$ and hence $v_{1},v_{2}\in\GL r$.
Take $h=\diag(1_{r},h_{2},h_{3})\in H_{(1,1,1)}$, then
\[
g_{1}^{-1}zh^{-1}=\left(\begin{array}{ccc}
1_{r} & 0 & v_{1}h_{3}^{-1}\\
0 & h_{2}^{-1} & v_{2}h_{3}^{-1}
\end{array}\right).
\]
We take $g_{2}=\diag(1_{r},h_{2}^{-1})\in\GL{2r}$ and consider 
\[
g_{2}^{-1}g_{1}^{-1}zh^{-1}=\left(\begin{array}{ccc}
1_{r} & 0 & v_{1}h_{3}^{-1}\\
0 & 1_{r} & h_{2}v_{2}h_{3}^{-1}
\end{array}\right).
\]
So we determine $h_{2},h_{3}$ by $v_{1}h_{3}^{-1}=1_{r},h_{2}v_{2}h_{3}^{-1}=-1_{r}$
so that $g_{2}^{-1}g_{1}^{-1}zh^{-1}$ becomes a normal form $\bx$,
namely 
\[
h_{2}=-v_{1}v_{2}^{-1},\quad h_{3}=v_{1}.
\]
Put $g=g_{1}g_{2}\in\GL{2r}$. Then we have $z=g\bx h$ with 
\[
g=(z^{(1)},z^{(2)})\diag(1_{r},-v_{2}v_{1}^{-1}),\quad h=\diag(1_{r},-v_{1}v_{2}^{-1},v_{1}).
\]
\end{proof}
By Lemma \ref{lem:Beta-1}, we have
\begin{equation}
F(z,\al)=F(g\bx h,\al)=(\det g)^{-r}\chi(h;\al)F(\bx,\al)=U(z;\al)F(\bx,\al),\label{eq:Beta-4-1}
\end{equation}
where 
\begin{equation}
U(z;\al)=\det(z^{(1)},z^{(2)})^{-r}(\det v_{1})^{\al^{(2)}+\al^{(3)}+r}(\det(-v_{2}))^{-\al^{(2)}-r}.\label{eq:Beta-5}
\end{equation}
Note that $F(\bx,\al)$ is independent of the variable $z$. Theorem
\ref{thm:conti-nonconf} says that we have the contiguity relation
(\ref{eq:Beta-1}) for the Radon HGF $F(z;\al)$ of type $\lm=(1,1,1)$
for any root $\ep^{(i)}-\ep^{(j)}\quad(1\leq i\neq j\leq3)$. Our
assertion (Proposition \ref{prop:Beta-0}) is that the contiguity
relations for $\ep^{(i)}-\ep^{(j)}$ gives either (\ref{eq:Beta-3})
or (\ref{eq:Beta-4}) or the combination of them. We check this assertion
for the roots $\ep^{(i)}-\ep^{(j)}\quad(1\leq i<j\leq3)$. Other cases
are checked in a similar way.

\subsubsection{Proof for the case $\protect\ep^{(1)}-\protect\ep^{(3)}$}

The contiguity relation for the Radon HGF in this case is 
\begin{equation}
\cL^{(1,3)}F(z,\al)=\bb(\al^{(3)})F(z,\al+\ep^{(1)}-\ep^{(3)})\label{eq:Beta-5-1}
\end{equation}
with $\cL^{(1,3)}=\det(\tr z^{(1)}\pa^{(3)})$. We compute the left
hand side of (\ref{eq:Beta-5-1}). Note that
\[
\cL^{(1,3)}F(z,\al)=\cL^{(1,3)}U(z;\al)\cdot F(\bx,\al)
\]
since $\bx$ is a constant matrix, where $U(z;\al)$ is that given
by (\ref{eq:Beta-5}). So we compute $\cL^{(1,3)}U(z;\al)$. Looking
at the form (\ref{eq:Beta-5}) of $U(z;\al)$, we introduce the new
variables 
\[
(\tilde{z}^{(1)},\tilde{z}^{(2)},v):=(z^{(1)},z^{(2)},(z^{(1)},z^{(2)})^{-1}z^{(3)})
\]
and express the operator $\cL^{(1,3)}$ in terms of them. Put
\[
v=(v_{a,b})_{0\leq a<2r,1\leq b\leq r},\quad v_{1}=(v_{a,b})_{0\leq a<r,1\leq b\leq r},\quad v_{2}=(v_{a,b})_{r\leq a<2r,1\leq b\leq r}.
\]
Accordingly, we write 
\[
\pa_{v}:=(\pa/\pa v_{a,b})_{0\leq a<2r,1\leq b\leq r},\quad\pa_{v_{1}}:=(\pa/\pa v_{a,b})_{0\leq a<r,1\leq b\leq r},\quad\pa_{v_{2}}:=(\pa/\pa v_{a,b})_{r\leq a<2r,1\leq b\leq r}.
\]

\begin{lem}
The operator $\cL^{(1,3)}$ is written as $\det(\pa_{v_{1}})$ in
the variables $(\tilde{z}^{(1)},\tilde{z}^{(2)},v)$.
\end{lem}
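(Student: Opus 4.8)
The goal is to re-express the operator $\cL^{(1,3)}=\det(\tr z^{(1)}\pa^{(3)})$ after the change of variables $(z^{(1)},z^{(2)},z^{(3)})\mapsto(\tilde z^{(1)},\tilde z^{(2)},v)$ with $\tilde z^{(1)}=z^{(1)}$, $\tilde z^{(2)}=z^{(2)}$, and $v=(z^{(1)},z^{(2)})^{-1}z^{(3)}$. Since $\cL^{(1,3)}$ involves only derivatives in the entries of $z^{(3)}$, the computation reduces to finding how $\pa^{(3)}=(\pa/\pa z^{(3)}_{a,b})$ transforms under $z^{(3)}\mapsto v$. The plan is to use the chain rule together with the matrix identity $z^{(3)}=(z^{(1)},z^{(2)})v=\tilde z^{(1)}v_1+\tilde z^{(2)}v_2$, which expresses the old variable in terms of the new ones, and to note that $\tilde z^{(1)},\tilde z^{(2)}$ do not depend on $z^{(3)}$.

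\textbf{Key steps.} First I would record that, since $v=(z^{(1)},z^{(2)})^{-1}z^{(3)}$ is linear in $z^{(3)}$ with coefficient matrix $(z^{(1)},z^{(2)})^{-1}$, the Jacobian of this linear map gives, contravariantly (exactly as in the proof of Lemma \ref{lem:cont-key}), the relation
\[
\pa^{(3)}=\tr\!\big((z^{(1)},z^{(2)})^{-1}\big)\,\pa_v=\tr(g_1^{-1})\,\pa_v,\qquad g_1:=(z^{(1)},z^{(2)}).
\]
Here $\pa_v$ is the $2r\times r$ matrix of derivatives in the entries of $v$, split as $\pa_v=\binom{\pa_{v_1}}{\pa_{v_2}}$. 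Second, substitute this into the operator:
\[
\cL^{(1,3)}=\det\big(\tr z^{(1)}\,\pa^{(3)}\big)=\det\big(\tr z^{(1)}\,\tr(g_1^{-1})\,\pa_v\big)=\det\big(\tr(g_1^{-1}z^{(1)})\,\pa_v\big).
\]
Third, observe that $g_1^{-1}z^{(1)}=(z^{(1)},z^{(2)})^{-1}z^{(1)}=\binom{1_r}{0}$, the $2r\times r$ matrix whose top $r\times r$ block is $1_r$ and whose bottom block vanishes. Therefore $\tr(g_1^{-1}z^{(1)})=(1_r\ \ 0)$, an $r\times 2r$ matrix, and
\[
\tr(g_1^{-1}z^{(1)})\,\pa_v=(1_r\ \ 0)\binom{\pa_{v_1}}{\pa_{v_2}}=\pa_{v_1}.
\]
Hence $\cL^{(1,3)}=\det(\pa_{v_1})$, as claimed. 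One should also remark that $\pa_{v_1}$ has entries commuting with each other (they are independent coordinate derivations in the $v_1$-block), so the column determinant equals the ordinary determinant and there is no ambiguity.

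\textbf{Main obstacle.} The only subtle point is the direction of the transformation law for the derivatives: because $z^{(3)}$ is the \emph{image} of $v$ under multiplication by $g_1$ (equivalently $v$ is the image of $z^{(3)}$ under $g_1^{-1}$), the operator $\pa^{(3)}$ transforms by $\tr(g_1^{-1})$ rather than $\tr g_1$ — precisely the contravariance already exploited in the proof of Lemma \ref{lem:cont-key}. Once this is correctly set up, the rest is the elementary matrix identity $g_1^{-1}z^{(1)}=\binom{1_r}{0}$ and the block multiplication that collapses $\pa_v$ to $\pa_{v_1}$. I would also note in passing that $v_1,v_2$ are the entries appearing in $U(z;\al)$ of (\ref{eq:Beta-5}), so this lemma is exactly what is needed to apply Cayley's formula and compute $\cL^{(1,3)}U(z;\al)$ in the next step.
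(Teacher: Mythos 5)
Your proof is correct and follows essentially the same route as the paper: the contravariant transformation $\pa^{(3)}=\tr\!\left((z^{(1)},z^{(2)})^{-1}\right)\pa_{v}$, the identity $(z^{(1)},z^{(2)})^{-1}z^{(1)}=\left(\begin{smallmatrix}1_{r}\\0\end{smallmatrix}\right)$, and the resulting collapse of $\tr z^{(1)}\pa^{(3)}$ to $\pa_{v_{1}}$. The added remark that the entries of $\pa_{v_{1}}$ commute, so the column determinant is unambiguous, is a harmless and reasonable observation not present in the paper.
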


\begin{proof}
Note that only $v$ depends on $z^{(3)}$ among the new variables.
Since the change of the operator $\pa^{(3)}\mapsto\pa_{v}$ is contravariant
to that of $z^{(3)}\mapsto v$, we have $\pa^{(3)}=\tr(z^{(1)},z^{(2)})^{-1}\pa_{v}.$
It follows that 
\begin{align*}
\tr z^{(1)}\pa^{(3)} & =\tr z^{(1)}\tr(z^{(1)},z^{(2)})^{-1}\pa_{v}=\tr\left((z^{(1)},z^{(2)})^{-1}z^{(1)}\right)\pa_{v}\\
 & =(1_{r},0_{r})\pa_{v}=\pa_{v_{1}}
\end{align*}
and $\cL^{(1,3)}=\det(\tr z^{(1)}\pa^{(3)})=\det(\pa_{v_{1}})$. 
\end{proof}
Using the above lemma, we have 
\begin{align*}
\cL^{(1,3)}U(z;\al) & =\det(\pa_{v_{1}})U(z;\al)\\
 & =\det(z^{(1)},z^{(2)})^{-r}\cdot\det(\pa_{v_{1}})(\det v_{1})^{\al^{(2)}+\al^{(3)}+r}\cdot(\det(-v_{2}))^{-\al^{(2)}-r}\\
 & =\det(z^{(1)},z^{(2)})^{-r}\cdot\bb(\al^{(2)}+\al^{(3)}+r)(\det v_{1})^{\al^{(2)}+\al^{(3)}+r-1}\cdot(\det(-v_{2}))^{-\al^{(2)}-r}\\
 & =\bb(\al^{(2)}+\al^{(3)}+r)U(z;\al+\ep^{(1)}-\ep^{(3)})
\end{align*}
by virtue of Cayley's formula. Since the right hand side of (\ref{eq:Beta-5-1})
is $\bb(\al^{(3)})U(z;\al+\ep^{(1)}-\ep^{(3)})F(\bx,\al+\ep^{(1)}-\ep^{(3)})$,
(\ref{eq:Beta-5-1}) reads as
\[
\bb(\al^{(2)}+\al^{(3)}+r)F(\bx,\al)=\bb(\al^{(3)})F(\bx,\al+\ep^{(1)}-\ep^{(3)}).
\]
Putting $\al^{(2)}=a-r,\al^{(3)}=b-r$, this can be written as 
\[
B_{r}(a,b)=\frac{(b-1)\cdots(b-r)}{(a+b-1)\cdots(a+b-r)}B_{r}(a,b-1).
\]
This gives the formula (\ref{eq:Beta-4}) for the beta function.

\subsubsection{Proof for the case $\protect\ep^{(1)}-\protect\ep^{(2)}$}

The contiguity relation for the Radon HGF in this case is 
\begin{equation}
\cL^{(1,2)}F(z,\al)=\bb(\al^{(2)})F(z,\al+\ep^{(1)}-\ep^{(2)}).\label{eq:Beta-5-2}
\end{equation}
with $\cL^{(1,2)}=\det(\tr z^{(1)}\pa^{(2)})$. The left hand side
of (\ref{eq:Beta-5-2}) is $\cL^{(1,2)}F(z,\al)=D^{(1,2)}U(z;\al)\cdot F(\bx,\al)$,
where $U(z;\al)$ is that given by (\ref{eq:Beta-5}). To compute
$\cL^{(1,2)}U(z;\al)$ let us write the operator $\cL^{(1,2)}$ in
terms of the new coordinates 
\[
(\tilde{z}^{(1)},\tilde{z}^{(2)},v):=(z^{(1)},z^{(2)},(z^{(1)},z^{(2)})^{-1}z^{(3)})
\]
as in the previous case. 
\begin{lem}
We have $\cL^{(1,2)}U(z;\al)=(-1)^{r}\det v_{2}\cdot\det(\pa_{v_{1}})U(z;\al)$.
\end{lem}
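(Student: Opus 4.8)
The plan is to rewrite $\cL^{(1,2)}=\det(\tr z^{(1)}\pa^{(2)})$ in the coordinates $(\tilde z^{(1)},\tilde z^{(2)},v)=(z^{(1)},z^{(2)},(z^{(1)},z^{(2)})^{-1}z^{(3)})$, exactly as was done for $\cL^{(1,3)}$, and then exploit the explicit shape of $U(z;\al)$ in (\ref{eq:Beta-5}). Set $M=(z^{(1)},z^{(2)})$, so that $v=M^{-1}z^{(3)}$; writing $M^{-1}=\binom{N_1}{N_2}$ with $N_1,N_2\in\mat(r,2r)$, we have $M^{-1}z^{(1)}=\binom{1_r}{0_r}$, i.e. $N_1z^{(1)}=1_r$ and $N_2z^{(1)}=0$. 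The genuinely new feature, compared with the $\cL^{(1,3)}$ case, is that $z^{(2)}$ now occurs in \emph{two} of the new coordinates — directly in $\tilde z^{(2)}$ and, through $M^{-1}$, in $v$ — so the change of variables for $\pa^{(2)}$ is not a mere relabelling.

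First I would compute the transformation of $\pa^{(2)}$. Differentiating $v=M^{-1}z^{(3)}$ with $z^{(1)}$ held fixed gives $dv=-M^{-1}(dz^{(2)})\,v_2$, hence
\[
\pa^{(2)}_{a,b}=\pa_{\tilde z^{(2)}_{a,b}}-\sum_{c,d}(M^{-1})_{c,a}(v_2)_{b,d}\,\pa_{v_{c,d}} .
\]
Contracting with $\tr z^{(1)}$ and using $M^{-1}z^{(1)}=\binom{1_r}{0_r}$ to collapse the sum over $c$ onto the $v_1$-block, one obtains the matrix identity
\[
\tr z^{(1)}\pa^{(2)}=\tr z^{(1)}\,\pa_{\tilde z^{(2)}}-\pa_{v_1}\,\tr v_2 ,
\]
so that $\cL^{(1,2)}=\det(\tr z^{(1)}\pa_{\tilde z^{(2)}}-\pa_{v_1}\tr v_2)$ as a column determinant.

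Next I would show that $\tr z^{(1)}\pa_{\tilde z^{(2)}}$ contributes nothing once the determinant is applied to $U$. In the new coordinates $U(z;\al)$ depends on $\tilde z^{(2)}$ only through the factor $(\det M)^{-r}$, and by cofactor expansion $\pa_{\tilde z^{(2)}_{a,q}}\det M=\det M\cdot(N_2)_{q,a}$, whence $(\tr z^{(1)}\pa_{\tilde z^{(2)}})_{p,q}(\det M)^{-r}=-r(\det M)^{-r}(N_2z^{(1)})_{q,p}=0$. Thus every entry of $\tr z^{(1)}\pa_{\tilde z^{(2)}}$ annihilates any function of the form (function of $v_1,v_2$)$\cdot(\det M)^{-r}$, and this class of functions is stable under applying the entries of $\pa_{v_1}\tr v_2$ (which involve only $\pa_{v_1}$ and multiplication by entries of $v_2$). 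Expanding the column determinant acting on $U$, any monomial containing a factor from $\tr z^{(1)}\pa_{\tilde z^{(2)}}$ then vanishes — reading right to left, its rightmost such factor meets a function in that stable class — so only the pure term survives: $\cL^{(1,2)}U=(-1)^r\det(\pa_{v_1}\tr v_2)U$. Finally, the entries of $\pa_{v_1}$ and of $\tr v_2$ pairwise commute (mixed partials commute, and $\pa_{v_1}$ commutes with multiplication by functions of $v_2$), so over that commutative subring $\det(\pa_{v_1}\tr v_2)=\det(\pa_{v_1})\det(v_2)=\det(v_2)\det(\pa_{v_1})$, giving $\cL^{(1,2)}U=(-1)^r\det v_2\cdot\det(\pa_{v_1})U$, as claimed.

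The step I expect to be most delicate is getting the transformation rule for $\pa^{(2)}$ precisely right — the sign, and which block of $M^{-1}$ survives the contraction with $z^{(1)}$ — and then making the cancellation of cross terms in the column determinant rigorous: pinning down the stable class of functions and keeping track of the order in which the operator factors act. The concluding commutativity/factorization argument is routine once this is in place.
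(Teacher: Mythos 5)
Your proposal is correct and follows essentially the same route as the paper: the same change of variables $(\tilde z^{(1)},\tilde z^{(2)},v)$, the same Jacobian computation giving $\tr z^{(1)}\pa^{(2)}=\tr z^{(1)}\pa_{\tilde z^{(2)}}-\pa_{v_1}\tr v_2$, the same observation that $\tr z^{(1)}\pa_{\tilde z^{(2)}}$ kills $(\det(z^{(1)},z^{(2)}))^{-r}$ (the paper sees this as a determinant with a repeated column), and the same commutative factorization $\det(-\pa_{v_1}\tr v_2)=(-1)^r\det v_2\det(\pa_{v_1})$. Your right-to-left stability argument for discarding the cross terms in the column determinant is a welcome elaboration of a step the paper passes over silently, but it is not a different method.
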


\begin{proof}
Since the new variables relating to $z^{(2)}$ are $\tilde{z}^{(2)}$
and $v$, the matrix of derivations $\pa^{(2)}=(\pa_{a,b}^{(2)})_{0\leq a<2r,1\leq b\leq r}$,
$\pa_{a,b}^{(2)}:=\pa/\pa z_{a,b}^{(2)}$ can be written symbolically
as $\pa^{(2)}=\tilde{\pa}^{(2)}+(\pa v/\pa z^{(2)})\pa_{v}$, where
$\tilde{\pa}^{(2)}=(\tilde{\pa}_{a,b}^{(2)})_{0\leq a<2r,1\leq b\leq r},\tilde{\pa}_{a,b}^{(2)}:=\pa/\pa\tilde{z}_{a,b}^{(2)}$.
We compute the second term. Since
\begin{align}
\frac{\pa v}{\pa z_{a,b}^{(2)}} & =\frac{\pa}{\pa z_{a,b}^{(2)}}(z^{(1)},z^{(2)})^{-1}z^{(3)}\nonumber \\
 & =-(z^{(1)},z^{(2)})^{-1}\cdot\frac{\pa}{\pa z_{a,b}^{(2)}}(z^{(1)},z^{(2)})\cdot(z^{(1)},z^{(2)})^{-1}z^{(3)}\nonumber \\
 & =-(z^{(1)},z^{(2)})^{-1}\cdot(0_{2r,r},E_{a,b})\cdot v,\label{eq:beta-3}
\end{align}
where $0_{2r,r}$ is the zero matrix in $\mat(2r,r)$ and $E_{a,b}\in\mat(2r,r)$
is the $(a,b)$-th matrix unit, namely, the matrix unit whose only
nonzero number, $1$, locates at the $(a,b)$-th entry, we have 
\[
\pa_{a,b}^{(2)}=\tilde{\pa}_{a,b}^{(2)}+\sum_{0\leq k<2r,1\leq l\leq r}\left(\frac{\pa v}{\pa z_{a,b}^{(2)}}\right)_{k,l}(\pa_{v})_{k,l}=\tilde{\pa}_{a,b}^{(2)}+\Tr\left\{ \tr\left(\frac{\pa v}{\pa z_{a,b}^{(2)}}\right)\pa_{v}\right\} .
\]
Then 
\begin{align}
(\tr z^{(1)}\pa^{(2)})_{p,q} & =\sum_{k=1}^{2r}(\tr z^{(1)})_{p,k}\pa_{k,q}^{(2)}\nonumber \\
 & =(\tr z^{(1)}\tilde{\pa}^{(2)})_{p,q}+\sum_{0\leq k<2r}(\tr z^{(1)})_{p,k}\Tr\left\{ \tr\left(\frac{\pa v}{\pa z_{k,q}^{(2)}}\right)\pa_{v}\right\} \label{eq:Beta-6}
\end{align}
Using (\ref{eq:beta-3}), the second term of (\ref{eq:Beta-6}) is
written as 

\begin{align*}
 & \sum_{0\leq k<2r}(\tr z^{(1)})_{p,k}\Tr\left\{ \tr\left(\frac{\pa v}{\pa z_{k,q}^{(2)}}\right)\pa_{v}\right\} \\
 & =\sum_{0\leq k<2r}(\tr z^{(1)})_{p,k}\Tr\left\{ \tr\left(-(z^{(1)},z^{(2)})^{-1}\cdot(0_{2r,r},E_{k,q})\cdot v\right)\pa_{v}\right\} \\
 & =\Tr\left\{ \tr\left(-(z^{(1)},z^{(2)})^{-1}\cdot\sum_{0\leq k<2r}z_{k,p}^{(1)}(0_{2r,r},E_{k,q})\cdot v\right)\pa_{v}\right\} \\
 & =\Tr\left\{ \tr\left(-(z^{(1)},z^{(2)})^{-1}\cdot(0_{2r,r},0,\dots,0,z_{p}^{(1)},0,\dots,0)\cdot v\right)\pa_{v}\right\} \\
 & =-\Tr\left\{ \tr\left(\left(\begin{array}{cc}
0_{r} & E_{p,q}\\
0_{r} & 0_{r}
\end{array}\right)\cdot v\right)\pa_{v}\right\} =-\Tr\left\{ (\tr v_{2}E_{q,p},0_{r})\left(\begin{array}{c}
\pa_{v_{1}}\\
\pa_{v_{2}}
\end{array}\right)\right\} \\
 & =-\Tr\left\{ \tr v_{2}E_{q,p}\pa_{v_{1}}\right\} =-\left(\pa_{v_{1}}(\tr v_{2})\right)_{p,q}.
\end{align*}
On the other hand, for the first part of (\ref{eq:Beta-6}) we have
$(\tr z^{(1)}\tilde{\pa}^{(2)})_{p,q}U(z;\al)=0$ for any $p,q$.
In fact, noting $(\tilde{z}^{(1)},\tilde{z}^{(2)})=(z^{(1)},z^{(2)})$,
\begin{align*}
(\tr z^{(1)}\tilde{\pa}^{(2)})_{p,q}\det(z^{(1)},z^{(2)}) & =\sum_{0\leq k<2r}(\tr z^{(1)})_{p,k}\pa_{k,q}^{(2)}\det(z^{(1)},z^{(2)})\\
 & =\det(z^{(1)},z_{1}^{(2)},\dots,z_{p}^{(1)},\dots,z_{r}^{(2)})=0.
\end{align*}
It follows that $(\tr z^{(1)}\tilde{\pa}^{(2)})_{p,q}\det(z^{(1)},z^{(2)})^{-r}=0$
for any $p,q$. Hence when we apply $\cL^{(1,2)}=\det(\tr z^{(1)}\pa^{(2)})$
to $U(z;\al)$, it acts as the operator $(-1)^{r}\det v_{2}\cdot\det(\pa_{v_{1}})$. 
\end{proof}
By the above lemma, we have 
\begin{align*}
\cL^{(1,2)}U(z;\al) & =(-1)^{r}\det v_{2}\cdot\det(\pa_{v_{1}})\det(z^{(1)},z^{(2)})^{-r}(\det v_{1})^{\al^{(2)}+\al^{(3)}+r}(\det(-v_{2}))^{-\al^{(2)}-r}\\
 & =\bb(\al^{(2)}+\al^{(3)}+r)\det(z^{(1)},z^{(2)})^{-r}(\det v_{1})^{\al^{(2)}+\al^{(3)}+r-1}(\det(-v_{2}))^{-(\al^{(2)}-1)-r}\\
 & =\bb(\al^{(2)}+\al^{(3)}+r)U(z;\al+\ep^{(1)}-\ep^{(2)})
\end{align*}
by virtue of Cayley's formula. Noting that the right hand side of
(\ref{eq:Beta-5-2}) is $\bb(\al^{(2)})U(z;\al+\ep^{(1)}-\ep^{(2)})F(\bx,\al+\ep^{(1)}-\ep^{(2)})$,
from (\ref{eq:Beta-5-2}), we have
\begin{equation}
\bb(\al^{(2)}+\al^{(3)}+r)F(\bx,\al)=\bb(\al^{(2)})F(\bx,\al+\ep^{(1)}-\ep^{(2)}).\label{eq:Beta-6-1}
\end{equation}
Putting $\al^{(2)}=a-r,\al^{(3)}=b-r$, (\ref{eq:Beta-6-1}) can be
written as 
\[
B_{r}(a,b)=\frac{(a-1)\cdots(a-r)}{(a+b-1)\cdots(a+b-r)}B_{r}(a-1,b).
\]
This is the formula (\ref{eq:Beta-3}) in Proposition \ref{prop:Beta-0}.

\subsubsection{Proof for the case $\protect\ep^{(2)}-\protect\ep^{(3)}$}

The contiguity relation for the Radon HGF in this case is 
\begin{equation}
\cL^{(2,3)}F(z,\al)=\bb(\al^{(3)})F(z,\al+\ep^{(2)}-\ep^{(3)}).\label{eq:Beta-7}
\end{equation}
with $\cL^{(2,3)}=\det(\tr z^{(2)}\pa^{(3)})$. The left hand side
of (\ref{eq:Beta-7}) is $\cL^{(2,3)}F(z,\al)=\cL^{(2,3)}U(z;\al)\cdot F(\bx,\al)$,
where $U(z;\al)$ is that given by (\ref{eq:Beta-5}). To compute
$\cL^{(2,3)}U(z;\al)$, let us write the operator $\cL^{(2,3)}$ in
terms of the new variables
\[
(\tilde{z}^{(1)},\tilde{z}^{(2)},v):=(z^{(1)},z^{(2)},(z^{(1)},z^{(2)})^{-1}z^{(3)})
\]
as in the previous two cases. 
\begin{lem}
The operator $\cL^{(2,3)}$ is written as $\det(\pa_{v_{2}})$ in
the variables $(\tilde{z}^{(1)},\tilde{z}^{(2)},v)$.
\end{lem}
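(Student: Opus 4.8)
The plan is to mimic verbatim the proof of the lemma asserting $\cL^{(1,3)}=\det(\pa_{v_{1}})$, the only change being which block of the inverse matrix is picked out. First I would record that, among the new variables $(\tilde{z}^{(1)},\tilde{z}^{(2)},v)=(z^{(1)},z^{(2)},(z^{(1)},z^{(2)})^{-1}z^{(3)})$, only $v$ involves the entries of $z^{(3)}$: the blocks $\tilde{z}^{(1)},\tilde{z}^{(2)}$ are literally $z^{(1)},z^{(2)}$ and contain no entry of $z^{(3)}$. Hence the substitution is, as far as $z^{(3)}$ is concerned, the linear change $z^{(3)}\mapsto v=(z^{(1)},z^{(2)})^{-1}z^{(3)}$ with $z^{(1)},z^{(2)}$ held fixed, and the associated matrices of derivations transform contravariantly, giving $\pa^{(3)}=\tr(z^{(1)},z^{(2)})^{-1}\pa_{v}$ exactly as in the previous two lemmas.

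Next I would substitute this into the defining expression for $\cL^{(2,3)}$ and compute directly:
\[
\tr z^{(2)}\pa^{(3)}=\tr z^{(2)}\,\tr(z^{(1)},z^{(2)})^{-1}\pa_{v}=\tr\left((z^{(1)},z^{(2)})^{-1}z^{(2)}\right)\pa_{v}.
\]
The one piece of linear algebra needed is the block identity $(z^{(1)},z^{(2)})^{-1}z^{(2)}=\left(\begin{smallmatrix}0_{r}\\ 1_{r}\end{smallmatrix}\right)$, which holds because $z^{(2)}$ is precisely the last $r$ columns of the $2r\times2r$ matrix $(z^{(1)},z^{(2)})$, so multiplying by its inverse yields the last $r$ columns of $1_{2r}$. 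Transposing gives $\tr\left((z^{(1)},z^{(2)})^{-1}z^{(2)}\right)=(0_{r},1_{r})$, hence $\tr z^{(2)}\pa^{(3)}=(0_{r},1_{r})\pa_{v}=\pa_{v_{2}}$, and therefore $\cL^{(2,3)}=\det(\tr z^{(2)}\pa^{(3)})=\det(\pa_{v_{2}})$, where $\pa_{v_{2}}$ is the $r\times r$ matrix of derivations $\pa/\pa v_{a,b}$ with $r\le a<2r$, matching the notation fixed just above the lemma.

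I do not expect any real obstacle. In contrast to the $\cL^{(1,2)}$ case treated earlier, there is no second new variable (besides $v$) depending on $z^{(3)}$, so no correction terms of the form $(\pa v/\pa z^{(2)})\pa_{v}$ arise and the computation collapses in one line once the block identity above is invoked. The only point requiring a word of care is to confirm that the restriction of $(0_{r},1_{r})\pa_{v}$ to the lower half of $\pa_{v}$ coincides with the matrix $\pa_{v_{2}}$ as indexed in the statement, which is immediate from the definitions of $v_{2}$ and $\pa_{v_{2}}$.
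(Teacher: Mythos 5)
Your proposal is correct and coincides with the paper's own proof: both use the contravariance $\pa^{(3)}=\tr(z^{(1)},z^{(2)})^{-1}\pa_{v}$ together with the block identity $(z^{(1)},z^{(2)})^{-1}z^{(2)}=\left(\begin{smallmatrix}0_{r}\\ 1_{r}\end{smallmatrix}\right)$ to get $\tr z^{(2)}\pa^{(3)}=(0_{r},1_{r})\pa_{v}=\pa_{v_{2}}$. Your observation that no correction terms arise because only $v$ depends on $z^{(3)}$ is exactly the point the paper relies on as well.
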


\begin{proof}
Since the change of the operator $\pa^{(3)}\mapsto\pa_{v}$ is contravariant
to that of $z^{(3)}\mapsto v$, we have $\pa^{(3)}=\tr(z^{(1)},z^{(2)})^{-1}\pa_{v}$.
It follows that 
\begin{align*}
\tr z^{(2)}\pa^{(3)} & =\tr z^{(2)}\tr(z^{(1)},z^{(2)})^{-1}\pa_{v}=\tr\left((z^{(1)},z^{(2)})^{-1}z^{(2)}\right)\pa_{v}\\
 & =(0_{r},1_{r})\pa_{v}=\pa_{v_{2}}.
\end{align*}
Hence we have $\cL^{(2,3)}=\det(\tr z^{(2)}\pa^{(3)})=\det(\pa_{v_{2}})$.
\end{proof}
We apply the above lemma to $U(z;\al)$ and assert that 
\begin{equation}
\cL^{(2,3)}U(z;\al)=(-1)^{r}\bb(-\al^{(2)}-r)U(z;\al+\ep^{(2)}-\ep^{(3)}).\label{eq:Beta-8}
\end{equation}
To see this we first note that 
\[
\cL^{(2,3)}U(z;\al)=\det(z^{(1)},z^{(2)})^{-r}(\det v_{1})^{\al^{(2)}+\al^{(3)}+r}\cdot\det(\pa_{v_{2}})(\det(-v_{2}))^{-\al^{(2)}-r}
\]
and show that
\begin{equation}
\det(\pa_{v_{2}})\det(-v_{2})^{-\al^{(2)}-r}=(-1)^{r}\bb(-\al^{(2)}-r)\det(-v_{2})^{-\al^{(2)}-r-1}.\label{eq:beta-2}
\end{equation}
In fact, if we make a change of variables $v_{2}\mapsto w:=-v_{2}$,
we have $\det(\pa_{v_{2}})=(-1)^{r}\det(\pa_{w})$ and 
\begin{align*}
\det(\pa_{v_{2}})\det(-v_{2})^{-\al^{(2)}-r} & =(-1)^{r}\det(\pa_{w})\det(w)^{-\al^{(2)}-r}\\
 & =(-1)^{r}\bb(-\al^{(2)}-r)\det(w)^{-\al^{(2)}-r-1}.
\end{align*}
by virtue of Cayley's formula. Thus we have (\ref{eq:Beta-8}). 

Now using (\ref{eq:Beta-8}), the contiguity relation (\ref{eq:Beta-7})
gives 
\[
(-1)^{r}\bb(-\al^{(2)}-r)F(\bx,\al)=\bb(\al^{(3)})F(\bx,\al+\ep^{(2)}-\ep^{(3)}).
\]
Putting $\al^{(2)}=a-r,\al^{(3)}=b-r$ in the both sides, we have
\[
a(a-1)\cdots(a-r+1)B_{r}(a,b)=(b-1)(b-2)\cdots(b-r)B_{r}(a+1,b-1),
\]
which is derived from (\ref{eq:Beta-3}) and (\ref{eq:Beta-4}) as
\begin{align*}
 & (b-1)(b-2)\cdots(b-r)B_{r}(a+1,b-1)\\
 & =(b-1)(b-2)\cdots(b-r)\frac{a(a-1)\cdots(a-r)}{(a+b-1)\cdots(a+b-r)}B_{r}(a,b-1)\\
 & =(b-1)(b-2)\cdots(b-r)\frac{a(a-1)\cdots(a-r)}{(a+b-1)\cdots(a+b-r)}\frac{(a+b-1)\cdots(a+b-r)}{(b-1)(b-2)\cdots(b-r)}B_{r}(a,b)\\
 & =a(a-1)\cdots(a-r+1)B_{r}(a,b).
\end{align*}

\subsection{Contiguity for Radon gamma}

We derive the contiguity relations for the gamma function $\G_{r}(a)$
given by the Hermitian matrix integral (\ref{eq:hermInt-4}) using
the contiguity relations for the Radon HGF of type $\lm=(2,1)$ given
in Theorem \ref{thm:cont-conf} . In this subsection we write $F(z;\al)$
for $F_{(2,1)}(z;\al)$ for the sake of brevity. Recall that the contiguity
relation for $F(z;\al)$ in this case is given by 

\begin{align}
\cL^{(1,2)}F(z,\al) & =\bb(\al_{0}^{(2)})F(z,\al+\ep^{(1)}-\ep^{(2)}),\label{eq:gamma-0}\\
\cL^{(2,1)}F(z,\al) & =(-1)^{r}F(z,\al+\ep^{(2)}-\ep^{(1)}),\label{eq:gamma-1}
\end{align}
where $\bb(s)=s(s+1)\cdots(s+r-1)$ is the $b$-function, the differential
operators $\cL^{(1,2)},\cL^{(2,1)}$ of order $r$ are given by 
\[
\cL^{(1,2)}=\det(\tr z_{0}^{(1)}\pa_{0}^{(2)}),\quad\cL^{(2,1)}=\det(\tr z_{0}^{(2)}\pa_{1}^{(1)})
\]
with the matrices
\[
\pa_{0}^{(2)}=(\pa/\pa(z_{0}^{(2)})_{a,b})_{0\leq a<2r,1\leq b\leq r},\quad\pa_{1}^{(1)}=(\pa/\pa(z_{1}^{(1)})_{a,b})_{0\leq a<2r,1\leq b\leq r},
\]
and $\al\mapsto\al+\ep^{(1)}-\ep^{(2)}$ means the change $\al_{0}^{(1)}\mapsto\al_{0}^{(1)}+1,\al_{0}^{(2)}\mapsto\al_{0}^{(2)}-1$
in $\al$.
\begin{prop}
For the case $\lm=(2,1)$, the contiguity relations (\ref{eq:gamma-0})
and (\ref{eq:gamma-1}) for the Radon HGF of type $\lm$ give the
contiguity for the gamma function:
\[
\G_{r}(a+1)=a(a-1)\cdots(a-r+1)\G_{r}(a).
\]
\end{prop}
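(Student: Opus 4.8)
The plan is to imitate the argument of the Radon beta subsection. First I would prove a normal-form lemma for $\lm=(2,1)$ in the spirit of Lemma \ref{lem:Beta-1}: for $z=(z^{(1)},z^{(2)})\in Z_{(2,1)}$ with $z^{(1)}=(z_{0}^{(1)},z_{1}^{(1)})$, put
\[
v=\begin{pmatrix}v_{1}\\v_{2}\end{pmatrix}:=(z_{0}^{(1)},z_{0}^{(2)})^{-1}z_{1}^{(1)},\qquad v_{1},v_{2}\in\mat(r),
\]
so that $(z_{0}^{(1)},z_{0}^{(2)})\in\GL{2r}$ and $v_{2}\in\GL r$ by the defining conditions of $Z_{(2,1)}$. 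A direct block computation then shows $z=g\bx h$ with $g=(z_{0}^{(1)},z_{0}^{(2)})\diag(1_{r},v_{2})\in\GL{2r}$ and $h=(h^{(1)},h^{(2)})\in\tH_{(2,1)}$, where $h^{(1)}\in J_{r}(2)$ has $h_{0}^{(1)}=1_{r},\ h_{1}^{(1)}=v_{1}$ and $h^{(2)}=v_{2}^{-1}$. Feeding this into Proposition \ref{prop:covariance-1} together with the explicit form of $\chi_{(2,1)}$ recalled above gives $F(z,\al)=U(z;\al)F(\bx,\al)$ with
\[
U(z;\al)=\det(z_{0}^{(1)},z_{0}^{(2)})^{-r}\exp\!\bigl(\al_{1}^{(1)}\Tr v_{1}\bigr)(\det v_{2})^{-\al_{0}^{(2)}-r},
\]
and $F(\bx,\al)$ independent of $z$.

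Next I would apply $\cL^{(2,1)}=\det(\tr z_{0}^{(2)}\pa_{1}^{(1)})$ to this identity. Since $F(\bx,\al)$ does not depend on $z$, only $U$ is differentiated. In the coordinates $(z_{0}^{(1)},z_{0}^{(2)},v)$ the variable $z_{1}^{(1)}$ enters only through $v$, so $\pa_{1}^{(1)}=\tr(z_{0}^{(1)},z_{0}^{(2)})^{-1}\pa_{v}$ and hence, exactly as in the case $\ep^{(2)}-\ep^{(3)}$ of the beta,
\[
\tr z_{0}^{(2)}\pa_{1}^{(1)}=\tr\!\bigl((z_{0}^{(1)},z_{0}^{(2)})^{-1}z_{0}^{(2)}\bigr)\pa_{v}=(0_{r},1_{r})\pa_{v}=\pa_{v_{2}},
\]
so $\cL^{(2,1)}=\det(\pa_{v_{2}})$ in these coordinates. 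As only the factor $(\det v_{2})^{-\al_{0}^{(2)}-r}$ of $U$ depends on $v_{2}$, Cayley's formula (\ref{eq:capelli-1-1}) yields
\[
\cL^{(2,1)}U(z;\al)=\bb(-\al_{0}^{(2)}-r)\,U(z;\al+\ep^{(2)}-\ep^{(1)}),
\]
since $\al\mapsto\al+\ep^{(2)}-\ep^{(1)}$ changes $\al_{0}^{(2)}$ into $\al_{0}^{(2)}+1$ and leaves $\al_{1}^{(1)}$ unchanged.

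Finally, I would insert this into the contiguity relation (\ref{eq:gamma-1}) and cancel the common nonzero factor $U(z;\al+\ep^{(2)}-\ep^{(1)})$, obtaining
\[
\bb(-\al_{0}^{(2)}-r)\,F(\bx,\al)=(-1)^{r}F(\bx,\al+\ep^{(2)}-\ep^{(1)}).
\]
With the parameter dictionary $(\al_{0}^{(1)},\al_{1}^{(1)},\al_{0}^{(2)})=(-a-r,-1,a-r)$ one has $F(\bx,\al)=\G_{r}(a)$ and $F(\bx,\al+\ep^{(2)}-\ep^{(1)})=\G_{r}(a+1)$ (compatible with the constraint (\ref{eq:her-ra-6})), while $\bb(-\al_{0}^{(2)}-r)=\bb(-a)=(-a)(-a+1)\cdots(-a+r-1)=(-1)^{r}a(a-1)\cdots(a-r+1)$; the factors $(-1)^{r}$ cancel, giving $\G_{r}(a+1)=a(a-1)\cdots(a-r+1)\G_{r}(a)$. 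The same identity can also be derived from (\ref{eq:gamma-0}), at the cost of the longer change-of-operator computation used for the $\ep^{(1)}-\ep^{(2)}$ case of the beta.

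The step I expect to demand the most care is the normal-form lemma, and in particular tracking precisely how $v_{1}$ and $v_{2}$ enter $\det g$ and $\chi_{(2,1)}(h;\al)$ so that the exponent of $\det v_{2}$ in $U(z;\al)$ comes out as exactly $-\al_{0}^{(2)}-r$; after that, the only place an error could hide is the sign bookkeeping identifying $\bb(-a)$ with $(-1)^{r}a(a-1)\cdots(a-r+1)$. Everything else is an immediate consequence of Theorem \ref{thm:cont-conf} and Cayley's formula.
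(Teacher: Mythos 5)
Your argument is correct and follows essentially the same route as the paper: reduce $z$ to the normal form $\bx$ via the $\GL{2r}\times H_{(2,1)}$ covariance (your $(g,h)$ differs cosmetically from the paper's choice but yields the identical multiplier $U(z;\al)$, since $-\al_{0}^{(2)}-r=\al_{0}^{(1)}+r$ by the constraint $\al_{0}^{(1)}+\al_{0}^{(2)}=-2r$), rewrite $\cL^{(2,1)}$ as $\det(\pa_{v_{2}})$ in the coordinates $(z_{0}^{(1)},z_{0}^{(2)},v)$, and apply Cayley's formula; the sign bookkeeping $(-1)^{r}\bb(-a)=a(a-1)\cdots(a-r+1)$ matches the paper exactly. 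The only difference is that the paper also carries out the $\ep^{(1)}-\ep^{(2)}$ case in full (obtaining the same identity in the shifted form $\G_{r}(a)=(a-1)\cdots(a-r)\G_{r}(a-1)$), which you only sketch, but that does not affect the stated conclusion.
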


For the proof of this result, we use the following lemma.
\begin{lem}
\label{lem:Gamma-1}For any $z=(z_{0}^{(1)},z_{1}^{(1)},z_{0}^{(2)})\in Z_{(2,1)}$,
there exist $g\in\GL{2r}$ and $h\in H_{(2,1)}$ such that

\[
z=g\bx h,\quad\bx=\left(\begin{array}{ccc}
1_{r} & 0 & 0\\
0 & 1_{r} & 1_{r}
\end{array}\right)\in Z_{(2,1)}.
\]
There are two choice for the above $(g,h)$:

(1) $g=(z_{0}^{(1)},z_{1}^{(1)})\left(\begin{array}{cc}
1_{r} & v_{1}v_{2}^{-1}\\
 & 1_{r}
\end{array}\right),h=\left(\begin{array}{cc}
1_{r} & -v_{1}v_{2}^{-1}\\
 & 1_{r}
\end{array}\right)\oplus v_{2}$ with 
\begin{equation}
v=\left(\begin{array}{c}
v_{1}\\
v_{2}
\end{array}\right)=(z_{0}^{(1)},z_{1}^{(1)})^{-1}z_{0}^{(2)}.\label{eq:gamma-2}
\end{equation}

(2) $g=(z_{0}^{(1)},z_{0}^{(2)})\diag(v_{2}^{-1},1_{r}),h=\left(\begin{array}{cc}
v_{2} & v_{2}v_{1}\\
 & v_{2}
\end{array}\right)\oplus1_{r}$ with 
\begin{equation}
v=\left(\begin{array}{c}
v_{1}\\
v_{2}
\end{array}\right)=(z_{0}^{(1)},z_{0}^{(2)})^{-1}z_{1}^{(1)}.\label{eq:gamma-3}
\end{equation}
\end{lem}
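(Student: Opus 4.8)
The plan is to prove Lemma~\ref{lem:Gamma-1} in the same spirit as Lemma~\ref{lem:Beta-1}: the statement asserts that every $z\in Z_{(2,1)}$ lies in a single orbit of the two-sided action $\GL{2r}\curvearrowright\mat'(2r,3r)\curvearrowleft H_{(2,1)}$ with representative $\bx$, and the proof is block Gaussian elimination driven by the nonvanishing of the two distinguished $2r\times 2r$ minors in the definition of $Z_{(2,1)}$. For $\lm=(2,1)$ the subdiagrams of weight $2$ are $(2,0)$ and $(1,1)$, so $z=(z_0^{(1)},z_1^{(1)},z_0^{(2)})\in Z_{(2,1)}$ means precisely that $\det(z_0^{(1)},z_1^{(1)})\neq 0$ and $\det(z_0^{(1)},z_0^{(2)})\neq 0$. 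Each of these two distinguished submatrices gives rise to one of the reductions (1), (2), which is exactly why the pair $(g,h)$ is not unique.

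For (1) I would first left-multiply $z$ by $(z_0^{(1)},z_1^{(1)})^{-1}$. This replaces the first two column-blocks by $1_{2r}$ and the third by $v:=(z_0^{(1)},z_1^{(1)})^{-1}z_0^{(2)}$, which I split into $r\times r$ blocks $v_1$ (top) and $v_2$ (bottom) as in (\ref{eq:gamma-2}). Since $Z_{(2,1)}$ is stable under $\GL{2r}$ acting on the left, the resulting matrix again lies in $Z_{(2,1)}$, and its minor $\det(z_0^{(1)},z_0^{(2)})$ now equals $\det v_2$; hence $v_2\in\GL r$. Next I would left-multiply by the block-unipotent matrix of $\GL{2r}$ whose single off-diagonal block is $-v_1v_2^{-1}$: this clears the top $r$ rows of the third column-block, which becomes a zero block above $v_2$, while turning the first $2r$ columns into a block-unipotent element of $J_r(2)$. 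Finally, right-multiplication by the element of $H_{(2,1)}$ displayed in (1) — whose $J_r(2)$-component is the inverse of that block-unipotent and whose $\GL r$-component is $v_2^{-1}$ — restores the first $2r$ columns to $1_{2r}$ and rescales the last column-block to the shape occurring in $\bx$, producing $\bx$ itself. Collecting the two left multiplications into $g^{-1}$ and the single right multiplication into $h^{-1}$ reproduces the explicit formulas for $g,h$ in (1); one block multiplication then confirms $g\bx h=z$.

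For (2) I would run the identical reduction but begin from the other distinguished submatrix: left-multiply $z$ by $(z_0^{(1)},z_0^{(2)})^{-1}$, after which the second column-block is $v:=(z_0^{(1)},z_0^{(2)})^{-1}z_1^{(1)}$, split into blocks $v_1,v_2$ as in (\ref{eq:gamma-3}), with $v_2\in\GL r$ by the same $\GL{2r}$-stability argument (now applied to the minor $\det(z_0^{(1)},z_1^{(1)})=\det v_2$); then left-multiply by $\diag(v_2,1_r)$ so that the first $2r$ columns become the element of $J_r(2)$ appearing in the $h$ of (2) and the last column-block already matches $\bx$; and finally right-multiply by that element of $H_{(2,1)}$ — its $J_r(2)$-component inverting the first block, its $\GL r$-component being $1_r$ — to land on $\bx$. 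Collecting the factors gives the formulas for $g,h$ in (2).

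There is no essential difficulty here; the whole argument is block row- and column-reduction, as in the proof of Lemma~\ref{lem:Beta-1}. The two points that do require care are: first, the right-hand factor must genuinely lie in $H_{(2,1)}$, i.e. its $\mat(2r,2r)$-component must have the Jordan-block shape with a single $h_0$ repeated on the diagonal and a single $h_1$ just above it, which is why the elimination is organized so that nothing beyond such a matrix is ever needed to undo the left-unipotent step; and second, the invertibility $v_2\in\GL r$ must be deduced from the $\GL{2r}$-invariance of $Z_{(2,1)}$ together with the relevant defining minor, not from inspection. Since both choices of $(g,h)$ are given in closed form, one may equally well skip the derivation and check $g\bx h=z$ directly, which is immediate in each case.
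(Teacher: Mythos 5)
Your proposal is correct and follows essentially the same route as the paper: block Gaussian elimination on $z$ using the two nonvanishing minors $\det(z_0^{(1)},z_1^{(1)})$ and $\det(z_0^{(1)},z_0^{(2)})$ that define $Z_{(2,1)}$, with $v_2\in\GL r$ deduced from the $\GL{2r}$-invariance of $Z_{(2,1)}$, and the factors collected into the displayed $(g,h)$. The only blemish is a wording slip in case (1), where the matrix you right-multiply by to reach $\bx$ is $h^{-1}$ rather than the displayed $h$ (so its $J_r(2)$-component is the inverse of the displayed one and its $\GL r$-component is $v_2^{-1}$); the final verification $g\bx h=z$ is unaffected.
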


\begin{proof}
Note that $z\in Z_{(2,1)}$ implies  $\det(z_{0}^{(1)},z_{1}^{(1)})\neq0$
and $\det(z_{0}^{(1)},z_{0}^{(2)})\ne0$. We show that the reduction
$z\to\bx$ is carried out by the choice of $(g,h)$ given by (1).
Put $g_{1}=(z_{0}^{(1)},z_{1}^{(1)})\in\GL{2r}$ and consider 
\[
g_{1}^{-1}z=\left(\begin{array}{ccc}
1_{r} & 0 & v_{1}\\
0 & 1_{r} & v_{2}
\end{array}\right),\quad v=\left(\begin{array}{c}
v_{1}\\
v_{2}
\end{array}\right)=g_{1}^{-1}z_{0}^{(2)}=(z_{0}^{(1)},z_{1}^{(1)})^{-1}z_{0}^{(2)}.
\]
Note that $g_{1}^{-1}z\in Z_{(2,1)}$ and hence $v_{2}\in\GL r$.
Take $h=\left(\begin{array}{cc}
1_{r} & h_{1}\\
 & 1_{r}
\end{array}\right)\oplus h_{2}\in H_{(2,1)}$ with $h_{2}\in\GL r$, then
\[
g_{1}^{-1}zh^{-1}=\left(\begin{array}{ccc}
1_{r} & -h_{1} & v_{1}h_{2}^{-1}\\
0 & 1_{r} & v_{2}h_{2}^{-1}
\end{array}\right).
\]
Take $h_{2}$ so that $v_{2}h_{2}^{-1}=1_{r}$, namely $h_{2}=v_{2}$.
Then 
\[
g_{1}^{-1}zh^{-1}=\left(\begin{array}{ccc}
1_{r} & -h_{1} & v_{1}v_{2}^{-1}\\
0 & 1_{r} & 1_{r}
\end{array}\right).
\]
Take $g_{2}^{-1}=\left(\begin{array}{cc}
1_{r} & -v_{1}v_{2}^{-1}\\
 & 1_{r}
\end{array}\right)$ and we have 
\[
g_{2}^{-1}g_{1}^{-1}zh^{-1}=\left(\begin{array}{ccc}
1_{r} & -h_{1}-v_{1}v_{2}^{-1} & 0\\
0 & 1_{r} & 1_{r}
\end{array}\right).
\]
Then determining $h_{1}$ as $h_{1}=-v_{1}v_{2}^{-1}$, we obtain
the desired normal form $\bx$. Put $g=g_{1}g_{2}\in\GL{2r}$. Then
we have $z=g\bx h$ with 
\[
g=(z_{0}^{(1)},z_{1}^{(1)})\left(\begin{array}{cc}
1_{r} & v_{1}v_{2}^{-1}\\
 & 1_{r}
\end{array}\right),\quad h=\left(\begin{array}{cc}
1_{r} & -v_{1}v_{2}^{-1}\\
 & 1_{r}
\end{array}\right)\oplus v_{2}.
\]
Next we consider the reduction $z\to\bx$ by (2). Put $g_{1}=(z_{0}^{(1)},z_{0}^{(2)})\in\GL{2r}$
and consider
\[
g_{1}^{-1}z=\left(\begin{array}{ccc}
1_{r} & v_{1} & 0\\
0 & v_{2} & 1_{r}
\end{array}\right),\quad v=\left(\begin{array}{c}
v_{1}\\
v_{2}
\end{array}\right)=g_{1}^{-1}z_{1}^{(1)}=(z_{0}^{(1)},z_{0}^{(2)})^{-1}z_{1}^{(1)}.
\]
Note that $g_{1}^{-1}z\in Z_{(2,1)}$ implies $v_{2}\in\GL r$. Put
$h=\left(\begin{array}{cc}
h_{0} & h_{1}\\
 & h_{0}
\end{array}\right)\oplus1_{r}\in H_{(2,1)}$, then
\[
g_{1}^{-1}zh^{-1}=\left(\begin{array}{ccc}
1_{r} & v_{1} & 0\\
0 & v_{2} & 1_{r}
\end{array}\right)\left(\begin{array}{ccc}
h_{0} & h_{1} & 0\\
 & h_{0}\\
 &  & 1_{r}
\end{array}\right)^{-1}=\left(\begin{array}{ccc}
h_{0}^{-1} & -h_{0}^{-1}h_{1}h_{0}^{-1}+v_{1}h_{0}^{-1} & 0\\
0 & v_{2}h_{0}^{-1} & 1_{r}
\end{array}\right).
\]
So we take $g_{2}^{-1}=\diag(h_{0},1_{r})$ and have 
\[
g_{2}^{-1}g_{1}^{-1}zh^{-1}=\left(\begin{array}{ccc}
1_{r} & -h_{1}h_{0}^{-1}+h_{0}v_{1}h_{0}^{-1} & 0\\
0 & v_{2}h_{0}^{-1} & 1_{r}
\end{array}\right).
\]
We determine $h_{0},h_{1}$ by the condition $v_{2}h_{0}^{-1}=1_{r},-h_{1}+h_{0}v_{1}=0$,
namely, $h_{0}=v_{2},h_{1}=v_{2}v_{1}$. Then $g_{2}^{-1}g_{1}^{-1}zh^{-1}$
becomes the desired normal form $\bx$. Put $g=g_{1}g_{2}\in\GL{2r}$.
Then we have $z=g\bx h$ with 
\[
g=(z_{0}^{(1)},z_{0}^{(2)})\left(\begin{array}{cc}
v_{2}^{-1}\\
 & 1_{r}
\end{array}\right),\quad h=\left(\begin{array}{cc}
v_{2} & v_{2}v_{1}\\
 & v_{2}
\end{array}\right)\oplus1_{r}.
\]
\end{proof}

\subsubsection{Proof for the case $\protect\ep^{(2)}-\protect\ep^{(1)}$}

The contiguity relation for the Radon HGF we use in this case is 
\begin{equation}
\cL^{(2,1)}F(z,\al)=(-1)^{r}F(z,\al+\ep^{(2)}-\ep^{(1)}).\label{eq:gamma-3-1}
\end{equation}
We use the normalization (2) of Lemma \ref{lem:Gamma-1}. Then 
\[
F(z,\al)=F(g\bx h,\al)=(\det g)^{-r}\chi(h;\al)F(\bx,\al)=U(z;\al)F(\bx,\al),
\]
where 
\[
U(z;\al)=\det(z_{0}^{(1)},z_{0}^{(2)})^{-r}(\det v_{2})^{\al_{0}^{(1)}+r}\exp(\al_{1}^{(1)}\Tr(v_{1}))
\]
and $v=(z_{0}^{(1)},z_{0}^{(2)})^{-1}z_{1}^{(1)}$. Noting that $\cL^{(2,1)}=\det\left(\tr z_{0}^{(2)}\pa_{1}^{(1)}\right)$,
we rewrite this operator in terms of new variables $(\tilde{z}_{0}^{(1)},v,\tilde{z}_{0}^{(2)})$
defined by 
\[
(\tilde{z}_{0}^{(1)},v,\tilde{z}_{0}^{(2)})=(z_{0}^{(1)},(z_{0}^{(1)},z_{0}^{(2)})^{-1}z_{1}^{(1)},z_{0}^{(2)}).
\]
 
\begin{lem}
The operator $\cL^{(2,1)}$ is written as $\det(\pa_{v_{2}})$ in
the variables $(\tilde{z}_{0}^{(1)},v,\tilde{z}_{0}^{(2)})$.
\end{lem}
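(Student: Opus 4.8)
The plan is to mimic verbatim the computation carried out above for $\cL^{(1,3)}$ and $\cL^{(2,3)}$ in the beta case. First I would observe that, among the new variables $(\tilde{z}_{0}^{(1)},v,\tilde{z}_{0}^{(2)})$, only $v$ depends on $z_{1}^{(1)}$: indeed $\tilde{z}_{0}^{(1)}=z_{0}^{(1)}$ and $\tilde{z}_{0}^{(2)}=z_{0}^{(2)}$ do not involve $z_{1}^{(1)}$ at all, whereas $v=(z_{0}^{(1)},z_{0}^{(2)})^{-1}z_{1}^{(1)}$ depends on it linearly, the prefactor $(z_{0}^{(1)},z_{0}^{(2)})\in\GL{2r}$ being $z_{1}^{(1)}$-independent.

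Next, since the matrix of derivations $\pa_{1}^{(1)}$ changes contravariantly under $z_{1}^{(1)}\mapsto v$, I would write $\pa_{1}^{(1)}=\tr(z_{0}^{(1)},z_{0}^{(2)})^{-1}\pa_{v}$ and substitute this into $\cL^{(2,1)}=\det(\tr z_{0}^{(2)}\pa_{1}^{(1)})$. This gives
\[
\tr z_{0}^{(2)}\pa_{1}^{(1)}=\tr z_{0}^{(2)}\,\tr(z_{0}^{(1)},z_{0}^{(2)})^{-1}\pa_{v}=\tr\!\left((z_{0}^{(1)},z_{0}^{(2)})^{-1}z_{0}^{(2)}\right)\pa_{v}.
\]
Because $z_{0}^{(2)}$ occupies the right block of $(z_{0}^{(1)},z_{0}^{(2)})$, one has $(z_{0}^{(1)},z_{0}^{(2)})^{-1}z_{0}^{(2)}=\tr(0_{r},1_{r})$, so the right-hand side equals $(0_{r},1_{r})\pa_{v}=\pa_{v_{2}}$ under the splitting $v=\tr(v_{1},v_{2})$, $\pa_{v}=\tr(\pa_{v_{1}},\pa_{v_{2}})$. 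Hence $\cL^{(2,1)}=\det(\tr z_{0}^{(2)}\pa_{1}^{(1)})=\det(\pa_{v_{2}})$, as claimed.

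I do not expect any real obstacle here: the argument is the same as in the two beta-case lemmas, and the only care needed is bookkeeping of transposes and of which block $z_{0}^{(2)}$ occupies. The point of the lemma is what it sets up: applying $\cL^{(2,1)}=\det(\pa_{v_{2}})$ to $U(z;\al)=\det(z_{0}^{(1)},z_{0}^{(2)})^{-r}(\det v_{2})^{\al_{0}^{(1)}+r}\exp(\al_{1}^{(1)}\Tr(v_{1}))$, one notes that the factors $\det(z_{0}^{(1)},z_{0}^{(2)})^{-r}$ and $\exp(\al_{1}^{(1)}\Tr(v_{1}))$ are annihilated by every entry of $\pa_{v_{2}}$, so Cayley's formula in the variables $v_{2}$ yields $\cL^{(2,1)}U(z;\al)=\bb(\al_{0}^{(1)}+r)\,U(z;\al+\ep^{(2)}-\ep^{(1)})$; comparing with $(\ref{eq:gamma-3-1})$ and cancelling $U(z;\al+\ep^{(2)}-\ep^{(1)})$ gives $\bb(\al_{0}^{(1)}+r)F(\bx,\al)=(-1)^{r}F(\bx,\al+\ep^{(2)}-\ep^{(1)})$, which after identifying the parameters with those of $\G_{r}$ (so that $\al_{0}^{(2)}=a-r$, $\al_{1}^{(1)}=-1$) becomes the contiguity relation $\G_{r}(a+1)=a(a-1)\cdots(a-r+1)\G_{r}(a)$.
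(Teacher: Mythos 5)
Your argument is correct and coincides with the paper's own proof: both use $z_{1}^{(1)}=(z_{0}^{(1)},z_{0}^{(2)})v$ to get $\pa_{1}^{(1)}=\tr(z_{0}^{(1)},z_{0}^{(2)})^{-1}\pa_{v}$, then collapse $\tr\bigl((z_{0}^{(1)},z_{0}^{(2)})^{-1}z_{0}^{(2)}\bigr)\pa_{v}$ to $\pa_{v_{2}}$. Your explicit remark that only $v$ depends on $z_{1}^{(1)}$ (so no $\tilde{\pa}$ terms arise) is a point the paper leaves implicit, and the concluding application matches the paper's subsequent computation.
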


\begin{proof}
Since $z_{1}^{(1)}=(z_{0}^{(1)},z_{0}^{(2)})v$, we have $\pa_{1}^{(1)}=\tr(z_{0}^{(1)},z_{0}^{(2)})^{-1}\pa_{v}$.
It follows that 
\begin{align*}
\tr z_{0}^{(2)}\pa_{1}^{(1)} & =\tr z_{0}^{(2)}\tr(z_{0}^{(1)},z_{0}^{(2)})^{-1}\pa_{v}=\tr((z_{0}^{(1)},z_{0}^{(2)})^{-1}z_{0}^{(2)})\pa_{v}\\
 & =\tr\left(\begin{array}{c}
0\\
1_{r}
\end{array}\right)\pa_{v}=\pa_{v_{2}}
\end{align*}
and $\cL^{(2,1)}=\det(\tr z_{0}^{(2)}\pa_{1}^{(1)})=\det(\pa_{v_{2}})$. 
\end{proof}
Using Cayley's formula, we have 
\begin{align*}
\cL^{(2,1)}U(z;\al) & =\det(\pa_{v_{2}})\det(z_{0}^{(1)},z_{0}^{(2)})^{-r}(\det v_{2})^{\al_{0}^{(1)}+r}\exp(\al_{1}^{(1)}\Tr(v_{1}))\\
 & =\bb(\al_{0}^{(1)}+r)\det(z_{0}^{(1)},z_{0}^{(2)})^{-r}(\det v_{2})^{\al_{0}^{(1)}+r-1}\exp(-\Tr(v_{1}))\\
 & =\bb(\al_{0}^{(1)}+r)U(z;\al+\ep^{(2)}-\ep^{(1)}).
\end{align*}
Then the contiguity relation (\ref{eq:gamma-3-1}) reads 
\[
\bb(\al_{0}^{(1)}+r)F(\bx;\al)=(-1)^{r}F(\bx;\al+\ep^{(2)}-\ep^{(1)}).
\]
Noting that $\al_{0}^{(1)}+\al_{0}^{(2)}=-2r$ and 
\begin{align*}
(-1)^{r}\bb(\al_{0}^{(1)}+r) & =(-1)^{r}\bb(-\al_{0}^{(2)}-r)=(-1)^{r}\bb(-a)\\
 & =(-1)^{r}(-a)(-a+1)\cdots(-a+r-1)\\
 & =a(a-1)\cdots(a-r+1),
\end{align*}
we obtain 
\[
\G_{r}(a+1)=a(a-1)\cdots(a-r+1)\G_{r}(a),
\]
which is the formula to be shown.

\subsubsection{Proof for the case $\protect\ep^{(1)}-\protect\ep^{(2)}$}

The contiguity relation for the Radon HGF in this case is 
\begin{equation}
\cL^{(1,2)}F(z,\al)=\bb(\al_{0}^{(2)})F(z,\al+\ep^{(1)}-\ep^{(2)}).\label{eq:gamma-3-2}
\end{equation}
We use the normalization (1) of Lemma \ref{lem:Gamma-1} in this case.
Then 
\[
F(z,\al)=F(g\bx h,\al)=(\det g)^{-r}\chi(h;\al)F(\bx,\al)=U(z;\al)F(\bx,\al),
\]
where
\begin{align*}
U(z;\al) & =\det(z_{0}^{(1)},z_{1}^{(1)})^{-r}\exp\left(\al_{1}^{(1)}\Tr(-v_{1}v_{2}^{-1})\right)(\det v_{2})^{\al_{0}^{(2)}}\\
 & =\det(z_{0}^{(1)},z_{1}^{(1)})^{-r}\exp\left(\Tr(v_{1}v_{2}^{-1})\right)(\det v_{2})^{\al_{0}^{(2)}}
\end{align*}
with $v=(z_{0}^{(1)},z_{1}^{(1)})^{-1}z_{0}^{(2)}$. Here we used
$\al_{1}^{(1)}=-1$. Noting that $\cL^{(1,2)}=\det\left(\tr z_{0}^{(1)}\pa_{0}^{(2)}\right)$,
we rewrite this operator in terms of new variables $(\tilde{z}_{0}^{(1)},\tilde{z}_{1}^{(1)},v)$,
where
\[
(\tilde{z}_{0}^{(1)},\tilde{z}_{1}^{(1)},v)=(z_{0}^{(1)},z_{1}^{(1)},(z_{0}^{(1)},z_{1}^{(1)})^{-1}z_{0}^{(2)}).
\]

\begin{lem}
The operator $\cL^{(1,2)}$ is written as $\det(\pa_{v_{1}})$ in
the variables $(\tilde{z}_{0}^{(1)},\tilde{z}_{1}^{(1)},v)$.
\end{lem}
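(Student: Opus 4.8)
The plan is to mimic verbatim the argument used in the three earlier \textquotedblleft change of variable\textquotedblright{} lemmas (for $\cL^{(1,3)}$, $\cL^{(2,1)}$, etc.), since the situation here is structurally identical. The point is that among the new variables $(\tilde z_{0}^{(1)},\tilde z_{1}^{(1)},v)$, only $v$ involves $z_{0}^{(2)}$, so when we rewrite $\cL^{(1,2)}=\det(\tr z_{0}^{(1)}\pa_{0}^{(2)})$ we only need to track how the matrix of derivations $\pa_{0}^{(2)}$ transforms under $z_{0}^{(2)}\mapsto v=(z_{0}^{(1)},z_{1}^{(1)})^{-1}z_{0}^{(2)}$.

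First I would record that, since $z_{0}^{(2)}=(z_{0}^{(1)},z_{1}^{(1)})v$ and $(z_{0}^{(1)},z_{1}^{(1)})$ is a function of the frozen variables $\tilde z_{0}^{(1)},\tilde z_{1}^{(1)}$ only, the chain rule gives the contravariant transformation
\[
\pa_{0}^{(2)}=\tr(z_{0}^{(1)},z_{1}^{(1)})^{-1}\pa_{v},
\]
exactly as in the companion lemmas. Then I would substitute this into $\tr z_{0}^{(1)}\pa_{0}^{(2)}$ and simplify:
\[
\tr z_{0}^{(1)}\pa_{0}^{(2)}=\tr z_{0}^{(1)}\,\tr(z_{0}^{(1)},z_{1}^{(1)})^{-1}\pa_{v}=\tr\!\left((z_{0}^{(1)},z_{1}^{(1)})^{-1}z_{0}^{(1)}\right)\pa_{v}=(1_{r},0_{r})\pa_{v}=\pa_{v_{1}},
\]
using that $(z_{0}^{(1)},z_{1}^{(1)})^{-1}z_{0}^{(1)}=\binom{1_{r}}{0_{r}}$ since $z_{0}^{(1)}$ is the left half of the invertible matrix $(z_{0}^{(1)},z_{1}^{(1)})$. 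Taking the (column = row, as the $L_{a,b}^{(1,2)}$ commute) determinant of both sides then yields $\cL^{(1,2)}=\det(\tr z_{0}^{(1)}\pa_{0}^{(2)})=\det(\pa_{v_{1}})$, which is the claim.

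I do not expect any genuine obstacle: the only point requiring a little care is the justification that the $z_{0}^{(2)}$-derivatives transform purely contravariantly (with no extra terms coming from $\tilde z_{0}^{(1)},\tilde z_{1}^{(1)}$), but this is immediate because those two new variables are simply copies of $z_{0}^{(1)},z_{1}^{(1)}$ and do not depend on $z_{0}^{(2)}$ — in contrast to the $\cL^{(1,2)}$-for-$(1,1,1)$ case, where the extra term was genuinely present. So the proof is a two-line computation once the contravariance is stated.
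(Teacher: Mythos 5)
Your proposal is correct and follows the same route as the paper: the contravariant transformation $\pa_{0}^{(2)}=\tr(z_{0}^{(1)},z_{1}^{(1)})^{-1}\pa_{v}$ followed by the simplification $\tr\bigl((z_{0}^{(1)},z_{1}^{(1)})^{-1}z_{0}^{(1)}\bigr)\pa_{v}=\pa_{v_{1}}$ is exactly the paper's two-line computation. Your extra remark on why no additional terms arise (unlike in the $(1,1,1)$ case for $\cL^{(1,2)}$) is a correct observation that the paper leaves implicit.
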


\begin{proof}
Since $z_{0}^{(2)}=(z_{0}^{(1)},z_{1}^{(1)})v$, we have $\pa_{0}^{(2)}=\tr(z_{0}^{(1)},z_{1}^{(1)})^{-1}\pa_{v}$.
It follows that 
\begin{align*}
\tr z_{0}^{(1)}\pa_{0}^{(2)} & =\tr z_{0}^{(1)}\tr(z_{0}^{(1)},z_{1}^{(1)})^{-1}\pa_{v}=\tr((z_{0}^{(1)},z_{1}^{(1)})^{-1}z_{0}^{(1)})\pa_{v}\\
 & =\tr\left(\begin{array}{c}
1_{r}\\
0
\end{array}\right)\pa_{v}=\pa_{v_{1}}.
\end{align*}
\end{proof}
We need the identity 
\begin{equation}
\det(\pa_{v_{1}})\exp\left(\Tr(v_{1}A)\right)=(\det A)\exp\left(\Tr(v_{1}A)\right),\quad A\in\mat(r).\label{eq:gamma-4}
\end{equation}
To see this, put $v_{1}=(v_{i,j})$ and $\pa_{i,j}=\pa/\pa v_{i,j}$
and show 
\[
\pa_{i,j}\exp\left(\Tr(v_{1}A)\right)=\exp\left(\Tr(v_{1}A)\right)A_{j,i}.
\]
In fact, this formula can be checked as 
\begin{align*}
\pa_{i,j}\exp\left(\Tr(v_{1}A)\right) & =\exp\left(\Tr(v_{1}A)\right)\pa_{i,j}\Tr(v_{1}A)=\exp\left(\Tr(v_{1}A)\right)\Tr(E_{i,j}A)\\
 & =\exp\left(\Tr(v_{1}A)\right)\sum_{a,b}(E_{i,j})_{a,b}A_{b,a}=\exp\left(\Tr(v_{1}A)\right)\sum_{a,b}\de_{i,a}\de_{j,b}A_{b,a}\\
 & =\exp\left(\Tr(v_{1}A)\right)A_{j,i}.
\end{align*}
 Then
\begin{align*}
\det(\pa_{v_{1}})\exp\left(\Tr(v_{1}A)\right) & =\sum_{\sm\in\Si_{r}}(\sgn\,\sm)\pa_{\sm(1),1}\pa_{\sm(2),2}\cdots\pa_{\sm(r),r}\exp\left(\Tr(v_{1}A)\right)\\
 & =\exp\left(\Tr(v_{1}A)\right)\sum_{\sm\in\Si_{r}}(\sgn\,\sm)A_{1,\sm(1)}A_{2,\sm(2)}\cdots A_{r,\sm(r)}\\
 & =\exp\left(\Tr(v_{1}A)\right)\det A.
\end{align*}
Thus the formula (\ref{eq:gamma-4}) is shown. Using this identity,
we have 
\begin{align*}
\cL^{(1,2)}U(z;\al) & =\det(\pa_{v_{1}})\det(z_{0}^{(1)},z_{1}^{(1)})^{-r}\exp\left(\Tr(v_{1}v_{2}^{-1})\right)(\det v_{2})^{\al_{0}^{(2)}}\\
 & =\det(z_{0}^{(1)},z_{1}^{(1)})^{-r}(\det v_{2})^{\al_{0}^{(2)}}\det(\pa_{v_{1}})\exp\left(\Tr(v_{1}v_{2}^{-1})\right)\\
 & =\det(z_{0}^{(1)},z_{1}^{(1)})^{-r}(\det v_{2})^{\al_{0}^{(2)}-1}\exp\left(\Tr(v_{1}v_{2}^{-1})\right)\\
 & =U(z;\al+\ep^{(1)}-\ep^{(2)}).
\end{align*}
Then the contiguity relation (\ref{eq:gamma-3-1}) can be written
as 
\[
F(\bx;\al)=\bb(\al_{0}^{(2)})F(\bx,\al+\ep^{(1)}-\ep^{(2)}),
\]
 and this in turn is written as 
\[
\G_{r}(a)=(a-1)(a-2)\cdots(a-r)\G_{r}(a-1),
\]
which is the formula to be shown.

\end{document}